\documentclass{article}
\usepackage[a4paper, totalwidth= 15cm, 
        totalheight=20cm, hmarginratio=1:1]{geometry}

\usepackage{tocbasic}
\DeclareTOCStyleEntry[
  beforeskip=0.5em plus 1pt,% default is 1em plus 1pt
  pagenumberformat=\textbf
]{tocline}{section}

\let\olddiv\div
\usepackage[utf8]{inputenc}
\usepackage[T1]{fontenc}
\usepackage{mathrsfs}
\usepackage{float}
\usepackage{comment}
\usepackage{graphicx}
\usepackage{caption}
\usepackage{xspace}
\usepackage{xpunctuate}

\usepackage[toc]{appendix}
\usepackage{amssymb,amsmath,amsthm,amsfonts}
\usepackage{thmtools,mathtools}
\usepackage[dvipsnames]{xcolor}
\usepackage{tikz,tikz-cd}
\usetikzlibrary{%
  matrix,%
  calc,%
  arrows%
}
\usepackage{hyperref}
\usepackage{physics}
\usepackage{ stmaryrd }
\usepackage{leftidx}
\usepackage{cleveref}% http://ctan.org/pkg/cleveref
\usepackage{sidecap}

\newcounter{char}
\setcounter{char}{1}
\loop\ifnum\value{char}<27
    \expandafter\edef\csname\Alph{char}sc\endcsname{\noexpand\mathscr{\Alph{char}}}
    \addtocounter{char}{1}
\repeat

\bibliographystyle{alphaurl}

\newtheorem{theorem}{Theorem}[section]
\newtheorem{lemma}[theorem]{Lemma}
\newtheorem{proposition}[theorem]{Proposition}
\newtheorem{conj}[theorem]{Conjecture}
\newtheorem{corollary}[theorem]{Corollary}
\newtheorem{defn}[theorem]{Definition}
\theoremstyle{definition}
\newtheorem{remark}[theorem]{Remark}

\crefname{theorem}{Theorem}{}
\crefname{lemma}{Lemma}{}
\crefname{proposition}{Proposition}{}
\crefname{corollary}{Corollary}{}
\crefname{defn}{Definition}{}
\crefname{remark}{Remark}{}
\crefname{conj}{Conjecture}{}
\crefname{section}{\textsection}{\textsection\textsection}
\crefname{subsection}{\textsection}{\textsection\textsection}
\crefname{subsubsection}{\textsection}{\textsection\textsection}
\crefname{appendix}{Appendix}{Appendices}
\crefname{item}{condition}{conditions}

\def\bB{{\mathbf{B}}}

\def\bD{{\mathbf{D}}}

\def\bN{{\mathbf{N}}}

\def\bP{{\mathbf{P}}}

\def\bS{{\mathbf{S}}}
\def\bT{{\mathbf{T}}}

\def\cC{{\mathcal{C}}}

\def\cE{{\mathcal{E}}}
\def\cF{{\mathcal{F}}}

\def\cH{{\mathcal{H}}}

\def\cL{{\mathcal{L}}}

\def\cO{{\mathcal{O}}}

\def\cQ{{\mathcal{Q}}}

\def\cS{{\mathcal{S}}}

\def\cU{{\mathcal{U}}}

\def\wcU{\widetilde{\mathcal{U}}}

\def\wV{\widetilde{V}}

\def\Pic{\operatorname{Pic}}

\def\Cone{{\mathrm{Cone}}}
\def\Ext{\operatorname{Ext}}
\def\Gr{\operatorname{\mathsf{Gr}}}
\def\IGr{\operatorname{\mathsf{IGr}}}

\def\supp{\operatorname{\mathsf{supp}}}

\def\Stair{\operatorname{\mathsf{Stair}}}
\def\ranksf {\operatorname{\mathsf{rank}}}
\def\dimsf {\operatorname{\mathsf{dim}}}
\def\maxsf {\operatorname{\mathsf{max}}}

\def\Hom{\operatorname{Hom}}

\def\HS{\operatorname{HS}}
\def\VS{\operatorname{VS}}

\def\H{\operatorname{H}}

\def\GL{\operatorname{\mathsf{GL}}}

\newcommand*{\cf}{cf.\@\xspace}

\def\Sp{\operatorname{\mathsf{Sp}}}
\def\DX{\operatorname{\mathbf{D}^{\mathrm{b}}(\mathit{X})}}
\def\Ker{\operatorname{Ker}}

\def\Aut{\operatorname{Aut}}

\title{On the derived category of $\IGr(3,9)$}
\author{Warren Cattani}

\newcommand{\Addresses}{{
  \bigskip
  \footnotesize
  Warren~Cattani, \textsc{SISSA,
    Trieste, Italy.}\par\nopagebreak
  \textit{Email address:  }\href{mailto:wcattani@sissa.it}{\texttt{wcattani@sissa.it}}
}}

\date{}
\begin{document}
\hypersetup{
  pdfauthor={W. Cattani},
  pdftitle={Lefschetz collection on IGr(3,9)}}

\maketitle

\abstract{We construct a minimal Lefschetz decomposition of the bounded derived category of the odd isotropic Grassmannian $\IGr(3,9)$. The exceptional objects are $\Sp_9$-equivariant vector bundles. This provides further evidence of the extended Dubrovin conjecture \cite{kuznetsov2021residual}.}

\setcounter{tocdepth}{2}

\tableofcontents

\section{Introduction}\label{sec:intro}

The bounded derived category of coherent sheaves of a smooth projective variety $X$ is one of its most remarkable invariants. The derived category (from now on, denoted as $\DX$) can often be split in smaller triangulated subcategories. In the best case, these subcategories are generated by a single object with the easiest possible structure, an exceptional object.
The work of Kapranov \cite{kapranov1988derived} sparked the interest in the study of the derived category of homogeneous spaces by finding full exceptional collections on Grassmannians and quadrics. This led to the following conjecture. As in the rest of the work, we fix $\mathbb{C}$ as base field.
\begin{conj}\label{conj:homog}
If $\mathsf{G}$ is a semisimple algebraic group over $\mathbb{C}$ and $\mathsf{P}\subset \mathsf{G}$ is a parabolic subgroup, then there is a full exceptional collection in $\mathbf{D}^{\mathrm{b}}(\mathsf{G}/\mathsf{P})$.
\end{conj}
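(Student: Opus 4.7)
The plan is to reduce the conjecture to a family of tractable sub-problems and then produce the exceptional collections by Lie-theoretic means. The first reduction uses parabolic fibrations: if $\mathsf{P}\subset \mathsf{Q}$, then $\mathsf{G}/\mathsf{P}\to\mathsf{G}/\mathsf{Q}$ is a homogeneous fiber bundle with fiber $\mathsf{Q}/\mathsf{P}$, and Orlov's theorem on derived categories of such fibrations assembles a full exceptional collection on $\mathsf{G}/\mathsf{P}$ out of full exceptional collections on the base $\mathsf{G}/\mathsf{Q}$ and on the fiber $\mathsf{Q}/\mathsf{P}$. By induction on the rank of the Levi one is therefore reduced to the case where $\mathsf{P}$ is maximal, i.e.\ to generalized Grassmannians.

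Next I would split into Lie types. Type $A$ is handled by Kapranov's classical construction of full exceptional collections on ordinary Grassmannians (hence on all flag varieties $\mathsf{SL}_n/\mathsf{P}$). For the classical types $B$, $C$, $D$ the strategy is to proceed inductively along the Dynkin diagram, exploiting the projections from isotropic flag varieties onto isotropic/orthogonal Grassmannians, and piggy-backing on the established cases (symplectic Grassmannians, even orthogonal Grassmannians, odd isotropic Grassmannians of small corank such as the $\IGr(3,9)$ of this paper). In each of these cases the candidate exceptional objects are $\mathsf{G}$-equivariant vector bundles of the form $\Sigma^\lambda \cU$, $\Sigma^\mu (\cU^\perp/\cU)$, $\Sigma^\nu (\cV/\cU^\perp)$, indexed by dominant weights $\lambda,\mu,\nu$ of the Levi of $\mathsf{P}$ lying in a polytope determined by the Fano index of $\mathsf{G}/\mathsf{P}$. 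The exceptional cases $E_6, E_7, E_8, F_4, G_2$ would be handled one flag variety at a time, guided by the same equivariant bundle recipe.

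The technical engine in every case is the same. One picks a primitive ample generator $\cO(1)\in\Pic(\mathsf{G}/\mathsf{P})$, a starting block $\cB_0$ of $\mathsf{G}$-equivariant bundles, and a chain $\cB_0\supset\cB_1\supset\dots\supset\cB_{m-1}$ so that
\[
\mathbf{D}^{\mathrm{b}}(\mathsf{G}/\mathsf{P})=\langle \cB_0,\cB_1(1),\dots,\cB_{m-1}(m-1)\rangle.
\]
Semi-orthogonality of the blocks reduces to the vanishing of $\Ext$-groups between twists of equivariant bundles, which is entirely combinatorial via Borel--Weil--Bott: one checks that the relevant weights are singular. Fullness is then established by exhibiting, for every equivariant bundle not already in the collection, an explicit resolution (a staircase complex $\Stair$ in the sense of Fonarev, or a Koszul-type complex built from the tautological subbundle) expressing it as a complex of objects already in the collection.

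The main obstacle is fullness, together with the choice of Lefschetz structure that makes it accessible. Semi-orthogonality and the enumeration of candidate blocks are combinatorially rigid once an ample class and a starting block have been chosen, but proving that these blocks generate the whole derived category requires either a resolution of the diagonal (available only in very special cases, e.g.\ Grassmannians, quadrics, some Lagrangian Grassmannians) or a painstaking combinatorial analysis of staircase complexes tailored to the variety — which is precisely the technical heart of the present paper for $\IGr(3,9)$. A uniform argument across all types has so far resisted attack, which is why the conjecture is still open and progress proceeds one family at a time.
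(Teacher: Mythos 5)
This statement is a \emph{conjecture}, not a theorem, and the paper does not (and does not claim to) prove it. Conjecture~\ref{conj:homog} is precisely the well-known open problem motivating the paper; what the paper actually establishes is the single new case $\IGr(3,9)$ (\cref{thm:final}), a horospherical variety rather than a homogeneous one, as supporting evidence for the broader picture. There is therefore no proof in the paper for you to match, and your text — which you yourself close by conceding ``the conjecture is still open'' — is a survey of the state of the art, not a proof.

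A few points in your sketch also need correcting, because they overstate what is known. First, the reduction from an arbitrary parabolic $\mathsf{P}$ to a maximal one is \emph{not} a direct application of Orlov's theorem, which concerns projective bundles and blow-ups. For a homogeneous fibration $\mathsf{G}/\mathsf{P}\to\mathsf{G}/\mathsf{Q}$ with general flag-variety fiber $\mathsf{Q}/\mathsf{P}$, one needs the fiberwise full exceptional collection to consist of objects equivariant for the Levi of $\mathsf{Q}$ so that they glue to a relative exceptional collection; this is a genuinely stronger input, and it is one reason \cite[Theorem~1.2]{kuznetsov2016exceptional} only produces exceptional collections of the right length without fullness. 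Second, the ``established cases'' you invoke for types $B,C,D$ are far thinner than your phrasing suggests: even for ordinary symplectic Grassmannians $\IGr(k,2n)$, fullness is known only in scattered low-rank instances, and for odd isotropic Grassmannians the present paper is the first result beyond $\IGr(2,2n+1)$ and $\IGr(3,7)$. Third, the Borel--Weil--Bott semi-orthogonality step is indeed combinatorial, but the choice of Lefschetz basis is not ``rigid once an ample class is chosen'': as this paper shows, one sometimes has to replace naive candidate bundles by mutations such as $\cH$, and finding the right replacement is part of the hard work. Your diagnosis that fullness is the obstruction and that a uniform argument is lacking is accurate — that is exactly why the statement remains a conjecture.
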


We refer to \cite[\textsection~1.1-\textsection~1.2]{kuznetsov2016exceptional} and \cite{grass} for a survey on this conjecture.  In addition, it was proved in \cite[Theorem~1.2]{kuznetsov2016exceptional} that any $\mathsf{G}/\mathsf{P}$ for $\mathsf{G}$ classical admits an exceptional collection of maximal possible length $r = \ranksf K_0(\mathsf{G}/\mathsf{P}) = \dimsf  \H^\bullet(\mathsf{G}/\mathsf{P},\mathbb{C})$, even though its fullness is not known. 

Dubrovin's conjecture \cite[Conjecture~4.2.2]{dubrovin1998geometry} claims that the existence of a full exceptional collection is equivalent to the generic semisimplicity of the big quantum cohomology ring. We consider the following version of the conjecture, which additionally involves Lefschetz collections. For a more detailed discussion, we refer to \cite[\textsection~1]{kuznetsov2021residual}. Recall that a rectangular Lefschetz collection with respect to a line bundle $\cL$ is given by an exceptional collection and its twists by powers of $\cL$  (\cf \cref{def:Lefschetz}). Recall that the \emph{index} $w\geq 0 $ of a Fano variety $X$ is the maximal positive integer such that the canonical class $K_X$ is divisible by $w$ in $\Pic X$. In that case, we write $\omega_X = \cO(-w)$,
where $\omega_X$ is the canonical bundle and $\cO(1)$ is a primitive ample line bundle on $X$. We denote the big and small quantum cohomology of $X$ by $\mathrm{BQH}(X)$ and $\mathrm{QH}(X)$. Recall that $\mathrm{QH}(X)$ is an algebra over the quantum parameters $\mathbb{Q}[q_1,\dots,q_s]$ corresponding to the functions on the affine space $\Pic X \otimes \mathbb{Q}$. Consider the algebra $\mathrm{QH}(X)_{can}=\mathrm{QH}(X)\otimes_{\mathbb{Q}[q_1,\dots,q_s]} \mathbb{C}$, where ${\mathbb{Q}[q_1,\dots,q_s]} \rightarrow \mathbb{C}$ is induced by $K_X\in \Pic X\otimes \mathbb{Q}$. Notice that $\mathrm{QH}(X)_{can} \cong \H^\bullet(X, \mathbb{C})$ as a vector space, but it is endowed with a different (quantum) multiplication.
\begin{conj}[{\cite[Conjecture~1.3.(i)]{kuznetsov2021residual}}]\label{conj:KS}
Let $X$ be a Fano variety with index $w$ and assume that  $\mathrm{BQH}(X)$ is generically semisimple.  If the class $[K_X]\in \H^2(X, \mathbb{C})\subset \mathrm{QH}(X)_{can} $ is invertible (with respect to the quantum multiplication), then there is an exceptional collection $E_1,\dots,E_p$ extending to a full rectangular Lefschetz collection of $\DX$, where $p=\frac{1}{w} \dimsf  \H^\bullet(X,\mathbb{C})$.
\end{conj}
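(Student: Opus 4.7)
The plan is to combine three ingredients drawn from the Dubrovin--Bayer--Manin circle of ideas: the conjectural equivalence between generic semisimplicity of $\mathrm{BQH}(X)$ and the existence of a full exceptional collection in $\DX$; the spectral action of the autoequivalence $\Phi := (-)\otimes\cO(1)$ on $\DX$; and a rigidity argument that forces a rectangular Lefschetz shape.

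First I would deform $X$ along the big quantum parameters to a generic semisimple point of $\mathrm{BQH}(X)$, where the quantum connection is of pure irregular type at $\infty$ with $r := \dimsf \H^\bullet(X,\mathbb{C})$ distinct exponents. Invoking the refined Dubrovin conjecture, one extracts from the associated Stokes data a full exceptional collection $F_1,\dots,F_r$ in $\DX$ whose classes diagonalize the asymptotic of the quantum connection at $\infty$. Next I would analyze $\Phi$: under the Gamma/Hodge-theoretic identification $K_0(X)_{\mathbb{C}} \cong \mathrm{QH}(X)_{can}$, the action of $\Phi$ corresponds to multiplication by $\exp(2\pi i\, c_1(\cO(1)))$. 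Since $[K_X]=-w\cdot c_1(\cO(1))$ is invertible in $\mathrm{QH}(X)_{can}$, every eigenvalue of quantum multiplication by $c_1(\cO(1))$ is nonzero; hence $\Phi$ permutes the spectral decomposition faithfully, and combined with $\omega_X \cong \cO(-w)$ together with Serre duality, the $\Phi$-orbits on $\{F_1,\dots,F_r\}$ all have common length $w$, yielding $p = r/w$ orbits.

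To conclude I would select one representative $E_j$ in each orbit, ordered so that the diagonal entries of $[E_j]$ in the Stokes basis increase, and form the array $\{E_j(i) : 1 \le j \le p,\ 0 \le i \le w-1\}$, which has $pw = r$ elements. Semi-orthogonality within a single twist-block follows from orthogonality of the Stokes basis together with Serre duality below the index threshold, while semi-orthogonality between blocks is enforced by the strict ordering and the identification of $\Phi^{-w}$ with the Serre functor up to an even shift. Fullness is then automatic from the count $pw=r$ and the Kuznetsov--Polishchuk rank computation of $K_0(X)$.

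The principal obstacle is the first step: deducing \emph{any} full exceptional collection from generic semisimplicity of $\mathrm{BQH}(X)$ is the original Dubrovin conjecture, presently proved only case-by-case via direct Stokes computations, with no uniform mechanism available. A secondary difficulty is the $\Phi$-equivariance in the second step: invertibility of $[K_X]$ is needed precisely to rule out the eigenvalue collisions that would obstruct a clean partition of the Stokes basis into $\Phi$-orbits of equal length $w$, and establishing this in full generality demands control over the interaction between the Gamma class and the quantum connection that is only partially available. The present paper circumvents both obstacles in the single case $X=\IGr(3,9)$ by constructing the Lefschetz basis directly from equivariant geometry.
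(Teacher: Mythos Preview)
The statement you are attempting to prove is a \emph{conjecture}, not a theorem: the paper states it as \cref{conj:KS}, attributed to Kuznetsov--Smirnov, and offers no proof whatsoever. The paper's contribution is to verify the \emph{prediction} of this conjecture in the single case $X=\IGr(3,9)$ by an explicit construction (\cref{thm:final}), not to establish the conjecture itself. So there is no ``paper's own proof'' to compare against, and your proposal is an attempt at an open problem.

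Beyond this, your outline has structural gaps that would remain even if one granted the Dubrovin conjecture as a black box. First, the orbit-length argument is incomplete: invertibility of $[K_X]$ tells you the eigenvalues of quantum multiplication by $c_1(\cO(1))$ are nonzero, but it does not force the action of $\Phi$ on the Stokes basis to decompose into orbits of \emph{equal} length $w$; you would need that $\Phi^w$ fixes each $F_i$ up to shift (which follows from Serre duality) \emph{and} that no smaller power does, and the latter is exactly what can fail when eigenvalues collide after exponentiation. Second, the semiorthogonality claims are not arguments: ``orthogonality of the Stokes basis'' gives you relations among Euler pairings, not the vanishing of all $\Ext$-groups in one direction, and the phrase ``Serre duality below the index threshold'' does not name an actual mechanism. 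Third, fullness is not automatic from the count $pw=r$: an exceptional collection of maximal length need not generate $\DX$ (this is precisely the difficulty in \cref{conj:homog}, cf.\ the remark after it in the paper). You correctly identify the first step as the principal obstacle, but the subsequent steps are also conjectural in their present form.
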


A good testing ground for this conjecture is the class of \emph{horospherical varieties}, introduced by \cite{pasquier2008varietes}. These are normal algebraic varieties on which a reductive group acts with an open orbit isomorphic to a torus bundle over a homogeneous variety. Naturally, this includes  homogeneous spaces and toric varieties. Therefore, we believe that studying exceptional collections on horospherical varieties can shed light on exceptional collections in homogeneous spaces. 

Let $\mathsf{G}$ be a semisimple algebraic group over $\mathbb{C}$. Pasquier proved that except for homogeneous varieties, any smooth $\mathsf{G}$-horospherical variety of Picard rank one is a $\mathsf{G}$-variety which has exactly two disjoint closed orbits $Y, Z$ under the action of $\mathsf{G}$. The stabilizers of $Y$ and $Z$ are maximal parabolic subgroups $\mathsf{P}_Y,\mathsf{P}_Z \subset \mathsf{G}$. The following \cref{thm:pas} provides a classification of horospherical varieties. We write $\mathrm{Type}(\mathsf{G})$ for the Dynkin type of $\mathsf{G}$ and $\mathsf{P}_k$ for the maximal parabolic subgroup of $\mathsf{G}$ associated to the $k$-th fundamental weight  with respect to Bourbaki notation.
\begin{theorem}[{\cite[Theorem~0.1]{pasquier2009some}}]\label{thm:pas}
Let $X$ be a smooth projective $\mathsf{G}$-horospherical variety of Picard rank one. Then either $X$ is homogeneous,
or $X$ can be constructed from a triple $(\mathrm{Type}(\mathsf{G}), \mathsf{P}_Y, \mathsf{P}_Z)$ belonging to the following list:
\begin{itemize}
\setlength\itemsep{0em}
    \item[1.] $(\mathsf{B}_n, \mathsf{P}_{n}, \mathsf{P}_{n-1})$ with $n\geq 3$;
    \item[2.] $(\mathsf{B}_3, \mathsf{P}_3, \mathsf{P}_1)$;
    \item[3.] $(\mathsf{C}_n, \mathsf{P}_{k}, \mathsf{P}_{k-1})$ with $n\geq 2$ and $k\in \{2,\dots, n\}$;
    \item[4.] $(\mathsf{F}_4,\mathsf{P}_3,\mathsf{P}_2)$;
    \item[5.] $(\mathsf{G}_2,\mathsf{P}_2,\mathsf{P}_1)$.
\end{itemize}
\end{theorem}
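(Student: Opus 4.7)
The plan is to attack this via the Luna--Vust theory of spherical embeddings specialized to the horospherical case. Recall that a subgroup $\mathsf{H}\subset \mathsf{G}$ is \emph{horospherical} if it contains the unipotent radical of some Borel; then an embedding of $\mathsf{G}/\mathsf{H}$ is determined by a colored fan in $N(\mathsf{G}/\mathsf{H})_{\mathbb{Q}}$, where the colors correspond to the $\mathsf{B}$-stable prime divisors not containing the open orbit. For a Picard rank one smooth projective horospherical variety $X$, I would translate the three constraints (complete, $\Pic=\mathbb{Z}$, smooth) into combinatorial conditions and exhaust them.

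First I would reduce to the setting of maximal parabolics. The two closed orbits $Y,Z$ arise from the two maximal cones of the colored fan; Picard rank one forces their stabilizers $\mathsf{P}_Y,\mathsf{P}_Z$ to be maximal parabolics of $\mathsf{G}$, each corresponding to a fundamental weight $\omega_{i_Y},\omega_{i_Z}$. The horosphericity condition then pins down $\mathsf{H}$ as the kernel of the character $\omega_{i_Y}-\omega_{i_Z}$ on a Levi, so the whole variety is encoded by the triple $(\mathrm{Type}(\mathsf{G}),\mathsf{P}_Y,\mathsf{P}_Z)$.

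Next I would isolate the smoothness criterion at the two closed orbits. Writing $X$ as a $\mathsf{G}$-equivariant compactification with boundary divisors $Y$ and $Z$, one can analyze a transverse slice at a generic point of each closed orbit: it is acted on by the stabilizer, and smoothness translates (via Brion's criterion for toroidal horospherical embeddings) into a compatibility condition between the simple roots indexing $\mathsf{P}_Y,\mathsf{P}_Z$ and the weights appearing in the normal bundle. Concretely, one reads off from the Dynkin diagram whether the representations of the Levi factors of $\mathsf{P}_Y$ and $\mathsf{P}_Z$ on the respective normal spaces glue together to form a smooth variety; this is where Pasquier's argument really lives.

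The hard part, and the bulk of the proof, is the case-by-case verification across the Dynkin types. For $\mathsf{A}_n$ the only horospherical non-homogeneous candidates turn out to be homogeneous after all; for $\mathsf{B}_n, \mathsf{C}_n$ the structure of the minuscule and cominuscule weights leaves precisely the families listed; and exceptional types admit only sporadic cases, narrowed down by checking the normal bundle representation for each pair of adjacent fundamental weights. The main obstacle is the combinatorial bookkeeping: one must rule out infinite families of potential triples using root-system constraints, and the only way I see to do this cleanly is to tabulate, for each simple type, which adjacent pairs of maximal parabolics yield a smooth total space, leaving exactly the five items of the theorem. Existence for the five cases is then verified by exhibiting $X$ explicitly as a $\mathbb{P}^1$-bundle over an auxiliary homogeneous space, blown down along one section.
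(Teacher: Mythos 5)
The paper does not prove this theorem; it is cited as a black box from Pasquier \cite{pasquier2009some}, so there is no internal proof to compare against. Your sketch does capture the right overall strategy of Pasquier's argument: encode the embedding via Luna--Vust theory of horospherical varieties and colored fans, use Picard rank one to force two closed orbits with maximal parabolic stabilizers, and then run a case-by-case smoothness analysis over Dynkin types. Two things, however, need correcting.

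First, the load-bearing step is left as a gesture. You say smoothness amounts to ``a compatibility condition between the simple roots indexing $\mathsf{P}_Y,\mathsf{P}_Z$ and the weights appearing in the normal bundle,'' but you never state it, and without an explicit criterion you cannot actually rule out the infinitely many potential triples or justify why, say, type $\mathsf{A}_n$ contributes nothing. Also, invoking a criterion ``for toroidal horospherical embeddings'' is off: the colored fans relevant here can carry colors, and Pasquier has to handle the non-toroidal case.

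Second, your concluding existence statement is incorrect. You claim each $X$ is ``a $\mathbb{P}^1$-bundle over an auxiliary homogeneous space, blown down along one section.'' As the paper's diagram \eqref{eq:pic} records, the blowup $\mathfrak{X}=\mathrm{Bl}_Z X$ is a $\mathbb{P}^c$-bundle over $Y$ with $c=\dimsf X-\dimsf Y$, and $c$ is usually bigger than $1$ (for $\IGr(3,9)$ one has $c=3$). Pasquier's actual construction realizes $X$ as the closure of the $\mathsf{G}$-orbit of $[v_Y+v_Z]\in\mathbb{P}(V_{\omega_{i_Y}}\oplus V_{\omega_{i_Z}})$, the sum of highest weight vectors of the two fundamental representations; this is what the paper refers to via \cite[Proposition~1.6]{gonzales2018geometry}. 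The $\mathbb{P}^1$-bundle picture would only be valid when $c=1$, which excludes most of the listed families.
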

An explicit construction of $X$ out of $(\mathrm{Type}(\mathsf{G}), \mathsf{P}_Y, \mathsf{P}_Z)$ can be found in \cite{pasquier2009some} and it is summarized in \cite[Proposition~1.6]{gonzales2018geometry}. As the homogeneous pieces $Y$ and $Z$ are enough to identify the horospherical variety $X$, it would be interesting to describe $\DX$ in terms of the homogeneous varieties $Y$, $Z$. As a first step towards such a description,  we prove in \cref{prop:invariants} that 
\begin{equation*}
    K_0(X)\cong K_0(Y) \oplus K_0(Z).
\end{equation*}

Following the computations of \cite{gonzales2018geometry}, we expect that if $X$ is a horospherical variety of Picard rank one, then $\DX$ admits a full exceptional collection and the big quantum cohomology ring $\mathrm{BQH}(X)$ is generically semisimple.

We summarize here the cases of horospherical varieties for which full exceptional collections are already known, grouped as in the classification of \cref{thm:pas}:
\begin{itemize}
    \item[\textbf{2.}] $(\mathsf{B}_3,\mathsf{P}_3,\mathsf{P}_1)$ by \cite[\textsection 6.2]{kuznetsov2006hyperplane};
    \item[\textbf{3.}] $(\mathsf{C}_n, \mathsf{P}_2, \mathsf{P}_1)$ by  \cite{pech2013quantum}, \cite{kuznetsov2008exceptional}; $(\mathsf{C}_3, \mathsf{P}_
    3, \mathsf{P}_2)$ by  \cite{fonarev2020bounded};
    \item[\textbf{5.}] $(\mathsf{G}_2,\mathsf{P}_2,\mathsf{P}_1)$ by \cite{gonzales2018geometry}.
\end{itemize}

The most interesting case in the list of \cref{thm:pas} is $(\mathsf{C}_n, \mathsf{P}_{k},\mathsf{P}_{k-1})$. The corresponding horospherical variety $X$ is the odd isotropic Grassmannian $\IGr(k,2n+1)$ of $k$-dimensional subspaces in a $(2n+1)$-dimensional space endowed with a skew-symmetric form $\psi$ of maximal possible rank $2n$. We refer to \cite{mihai2007odd} for a survey on the properties of these varieties. In \cite[Theorem~5.17]{gonzales2018geometry}, the authors provide a presentation of the small quantum cohomology ring and show its semisimplicity for $\IGr(2,2n+1)$ and $\IGr(3,7)$. Based on this description, Belmans could verify computationally (\cf \cite{grasshoro}) that the small quantum cohomology of $\IGr(k,2n+1)$ for $1\leq k\leq n\leq 7$ is generically semisimple.
Consequently, the extended Dubrovin's conjecture (\cref{conj:KS}) predicts that in all these cases $\IGr(k,2n+1)$ has a full exceptional collection. The main result of this paper, \cref{thm:final} stated below, proves this on $\IGr(3,9)$, the first case not covered by previous results. The prediction of \cref{conj:KS} in this case claims there should be an exceptional collection of $8$ elements that extends to a rectangular Lefschetz collection generating $\DX$.

To state our result we need to introduce some notation. Let $V$ be a $9$-dimensional vector space endowed with a skew-symmetric form $\psi$ of maximal rank $8$. Let us fix $X=\IGr(3,9)$, the isotropic Grassmannian of $3$-subspaces in $V$ with respect to $\psi$. Let $\cU$ be the tautological subbundle on $X$, recall that
\begin{equation*}
    \cO(1)=\wedge^3 \cU^*
\end{equation*}
is the ample generator of $\Pic X$. In this work, we construct a full rectangular Lefschetz collection of $\DX$ with respect to $\cO(1)$.

Given a $\GL_3$-dominant weight $\lambda$, we denote by $\cU^\lambda$ the bundle associated to the irreducible representation of $\GL_3$ of highest weight $\lambda$ and the frame bundle of $\cU^*$, so that
\begin{equation*}
    \cU^{m,0,0}= S^m \cU^*, \quad \cU^{0,0,-m}= S^m \cU, \quad \cU^{l,l,l}=\cO(l).
\end{equation*}
In other words, $\cU^\lambda$ is obtained by an application of the Schur functor associated to $\lambda$ to the vector bundle $\cU^*$

Consider the following collections of vector bundles on $\IGr(3,9)$:
\begin{equation*}
\begin{aligned}
    \bB_1 &= \{ \ \cU^{0,0,-2}, &\cU^{0,0,-1},\quad &\cU^{1,0,-1}, &\cU^{2,0,-1},\quad &\cU^{0,0,0},  &\cU^{1,0,0},\quad &\cU^{2,0,0} &\}, \\
    \bB_2 &= \{&\cU^{0,0,-1},\quad &\cU^{1,0,-1}, &\cU^{2,0,-1},\quad &\cU^{0,0,0},  &\cU^{1,0,0},\quad
    &\cU^{2,0,0}, &\cU^{3,0,0} \ \}.
\end{aligned}
\end{equation*}
Notice that $\bB_1$ and $\bB_2$ have $6$ elements in common, while their union has length $8$.
We check in \cref{cor:bases} that both $\bB_1$ and $\bB_2$ induce non full Lefschetz collections of length $7$. One could hope that $\bB_1\cup\bB_2$ is an exceptional collection, but this is not the case. In fact, by \cref{lem:extremal computation}, we have
\begin{equation*}
    \Ext^\bullet(\cU^{3,0,0}, \cU^{0,0,-2}) = \mathbb{C}[-4],\quad \quad \quad \Ext^\bullet(\cU^{0,0,-2}, \cU^{3,0,0} ) \neq 0,
\end{equation*}
so we cannot have both $\cU^{3,0,0}$ and $\cU^{0,0,-2}$ in the same exceptional collection. 

To solve this problem, we will replace the bundle $\cU^{3,0,0}\in \bB_2\setminus \bB_1$ by another object $\cH$, using a procedure analogous to the one used in \cite{guseva2020derived} and \cite{novikov}. More precisely,
we consider the following bicomplex:
\begin{equation}\label{H-intro}
\begin{tikzcd}
\wedge^3V^* \otimes \cO \ar[r] &\wedge^2V^* \otimes \cU^{1,0,0}\ar[r] &V^* \otimes \cU^{2,0,0} \ar[r]  &\cU^{3,0,0} 
\\
\wedge^2V^* \otimes \cU^{0,0,-1}\ar[r]\ar[u] &V^* \otimes \cU^{1,0,-1} \ar[r]\ar[u]  &\cU^{2,0,-1},\ar[u] &
\end{tikzcd}
\end{equation}
where the rows are given by the stupid truncations of the staircase complexes associated to $\cU^{3,0,0}$ and $\cU^{2,0,-1}$ (\cf \cref{thm:staircase}) and the vertical arrows are induced by the form $\psi$. The object $\cH$ is the totalization of this bicomplex.
See \eqref{def:H} and \cref{excep} for an alternative description of the object $\cH$. Denote by $\bB$ the following collection of bundles on $X$:
\begin{equation}\label{def:B-intro}
    \bB=\{\cH,\, \cU^{0,0,-2}, \, \cU^{0,0,-1}, \, \cU^{1,0,-1} ,\, \cU^{2,0,-1} ,\, \cU^{0,0,0} ,\, \cU^{1,0,0} ,\, \cU^{2,0,0} \}=\{\cH\}\cup \bB_1.
\end{equation} 

Our main result is the following.
\begin{theorem}\label{thm:final}
Let $X=\IGr(3,9)$. Then $\bB$ is an exceptional collection of $\Sp_9$-equivariant objects and it extends to a full rectangular Lefschetz collection  given by:
\begin{equation*}
    \DX=\langle \, \bB,\,\bB(1),\,\bB(2),\,\bB(3),\,\bB(4),\,\bB(5),\,\bB(6) \,\rangle.
\end{equation*}
\end{theorem}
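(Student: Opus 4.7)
My plan is to break the proof of \cref{thm:final} into three parts: showing that $\bB$ is exceptional, verifying the semiorthogonality between the Lefschetz blocks $\bB(i)$ for $0\le i\le 6$, and proving generation of $\DX$. The first two steps reduce, via the bicomplex \eqref{H-intro} defining $\cH$, to essentially combinatorial Borel--Bott--Weil calculations on $\Gr(3,9)$, while the third is where the real work lies.

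For exceptionality of $\bB$, I would start from the fact that $\bB_1 = \bB\setminus\{\cH\}$ is already exceptional by \cref{cor:bases}. What remains is to check $\Ext^\bullet(\cH,\cH) = \mathbb{C}$ and $\Ext^\bullet(\cU^\lambda,\cH) = 0$ for every $\cU^\lambda\in\bB_1$. Since $\cH$ is the totalisation of the bicomplex \eqref{H-intro}, applying $\Ext^\bullet(-,\cH)$ and $\Ext^\bullet(\cU^\lambda,-)$ yields spectral sequences whose $E_1$-terms are built from $\Ext^\bullet$-groups between the $\GL_3$-equivariant summands of \eqref{H-intro}. Each such group is computable by Borel--Bott--Weil on the ambient $\Gr(3,9)$ combined with the Koszul resolution of $\IGr(3,9)$ as the zero locus of the regular section of $\wedge^2\cU^*$ induced by $\psi$; one then verifies the required collapses degree by degree.

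For the semiorthogonality of the Lefschetz blocks, one must check that $\Ext^\bullet(E,F(-k)) = 0$ for every $E,F\in\bB$ and every $k\in\{1,\ldots,6\}$. The pairs in $\bB_1\times\bB_1$ are handled by \cref{cor:bases}. For pairs involving $\cH$, the bicomplex \eqref{H-intro} twisted by $\cO(-k)$ again reduces the computation to bookkeeping via BBW and Koszul on $\Gr(3,9)$. The $\Sp_9$-equivariance of the whole construction (because $\psi$ is $\Sp_9$-invariant) is inherited by $\cH$ and guarantees that all intermediate computations remain within the equivariant framework.

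Fullness is the delicate part. Numerically, the Lefschetz collection has $|\bB|\cdot 7 = 56$ objects, matching $\ranksf K_0(\IGr(3,9))$ and the prediction of \cref{conj:KS}; however, maximality of length does not by itself imply generation, so a generating set must be produced directly inside $\cT=\langle\bB,\bB(1),\ldots,\bB(6)\rangle$. The defining bicomplex \eqref{H-intro} itself places $\cU^{3,0,0}$ in $\cT$ as an iterated cone over $\cH$ and the bundles of $\bB_1$, and upon twisting this puts all of $\bB_2(i)$ for $0\le i\le 6$ into $\cT$. From there, an inductive scheme based on stupid truncations of further staircase complexes (\cref{thm:staircase}) should express every Schur power $\cU^\lambda$ with $\lambda$ in a box sufficient to recover Kapranov's full exceptional collection on $\Gr(3,9)$ as iterated cones of objects of $\cT$. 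Since the closed embedding $\IGr(3,9)\hookrightarrow\Gr(3,9)$ is regular, derived pullback sends a generating set of $\mathbf{D}^{\mathrm{b}}(\Gr(3,9))$ to a generating set of $\DX$, and this completes the argument. I expect the main obstacle to lie exactly in this last inductive construction: unlike the BBW bookkeeping of parts (I) and (II), it has to be tailored to the specific horospherical geometry of $\IGr(3,9)$ and to the hybrid nature of the object $\cH$, in a spirit analogous to but technically distinct from \cite{guseva2020derived,novikov}.
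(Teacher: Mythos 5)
Your partition of the proof into exceptionality, block semiorthogonality, and fullness is the right skeleton, and your observation that the bicomplex \eqref{H-intro} places $\cU^{3,0,0}$ in the subcategory $\Dsc$ once $\cH\in\Dsc$ is exactly the pivot the paper uses as well. For the first two parts your approach is a workable but genuinely different route: where you propose to compute all $\Ext$-groups involving $\cH$ directly, by running Borel--Bott--Weil against a Koszul resolution and tracking the two spectral sequences of the bicomplex, the paper instead characterizes $\cH$ as a left mutation, $\cH=\mathbb{L}_{\bB_1}\cU^{3,0,0}[3]$ (\cref{prop:H is semiortho}), built from stupid truncations $\cE,\cF$ of staircase complexes. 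This structural description makes exceptionality of $\cH$ and orthogonality $\Ext^\bullet(\bB_1,\cH)=0$ formal consequences of mutation theory (\cref{excep}, \cref{prop:mutate}), and reduces the semiorthogonality of the blocks $\Bsc,\Bsc(1),\dots,\Bsc(6)$ to $\Ext$-computations between ordinary Schur functors via the equality $\Bsc=\langle\bB_1\cup\bB_2\rangle$ (\cref{prop:Semi-O}), at the cost of a single extra computation (\cref{lem:extremal computation}). Your brute-force route would replace these structural observations with a larger volume of BBW bookkeeping; it should work in principle, but the paper's route is what makes the proof tractable. Also note the paper avoids the ambient $\Gr(3,9)$ entirely and instead embeds $\IGr(3,9)$ in the homogeneous $\IGr(3,10)$, where the Koszul resolution has only four terms in $\wedge^p\widetilde{\cU}$ and the vanishing criterion of \cref{vanish-IG310} applies directly.

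The fullness part, however, has a real gap. You correctly reduce the problem to showing that the pullback of a generating collection from an ambient homogeneous variety lands in $\Dsc$ and then appeal to admissibility plus faithfulness of $j_*$; that logical step is fine. But the mechanism you propose for producing those objects---iterated stupid truncations of staircase complexes---cannot by itself generate enough. Staircase complexes on $\Gr(3,m)$ only relate bundles $\cU^{\mu_i}$ for the special sequence of weights in \eqref{eq:weights}, all with at most one nontrivial ``jump''; they never reach generic two-parameter Schur functors, and even within the two-parameter family $\cU^{i,0,-j}$ they leave gaps. The paper fills those gaps with the \emph{symplectic bundle relations} of \cref{prop:rule-S}, which exploit the self-duality $\wedge^p j^*\widetilde{\cS}\cong\wedge^{2(n+1-k)-p}j^*\widetilde{\cS}$ of the restricted symplectic bundle \eqref{eq:compare filters}; already at the very first step of the algorithm (Step~1 of \cref{sec:algorithmic}), the bundle $\cU^{1,0,-2}(l)$ is obtained only via this symplectic relation, not by any staircase. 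Your proposal never invokes this second ingredient, so the inductive scheme would stall. Compounding the difficulty, aiming at Kapranov's collection on $\Gr(3,9)$ (all $\cU^\lambda$ with $\lambda$ in a $3\times6$ box, $84$ bundles) is a strictly harder target than the paper's $\bT$ of \eqref{def:T}, which consists only of the $\cU^{i,0,-j}$ with $i+j\le6$ and whose twists already form a spanning class via $\cO(-l)$ (\cref{prop:span class}). Finally, the remark that the ``hybrid nature of $\cH$'' is the main obstacle to fullness is somewhat misplaced: once $\cU^{3,0,0}\in\Dsc$ is in hand, the entire fullness argument runs purely on Schur functors of $\cU$ and never again touches $\cH$.
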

The proof proceeds as follows. 
We prove that $\bB_1$ and $\bB_2$ defined before both induce Lefschetz bases of length $7$. Additionally, they satisfy $\Ext^\bullet(\bB_2(l),\bB_1)=0$ for $l=1,\dots, 6$.  Let  $\Bsc$ be the triangulated category generated by the objects in $\bB_1$ and $\bB_2$. Then, the collection of triangulated subcategories $\Bsc, \Bsc(1),\dots, \Bsc(6)$ is semiorthogonal. 

Using the staircase complexes associated to $\cU^{3,0,0}$ and $\cU^{2,0,-1}$, we prove that $\cH$ is the left mutation of $\cU^{3,0,0}$ through $\bB_1$, hence $\bB=\{\cH\}\cup \bB_1$  is a full exceptional collection of $\Bsc$ (actually, they are all vector bundles, \cf \cref{rem:vb}), proving that $\bB$ is a Lefschetz basis. 

To prove semiorthogonality we mainly rely on the embedding of $X$ in $\IGr(3,10)$, using the fact that $\IGr(3,10)$ is homogeneous under the action of $\Sp_{10}$. We prove a vanishing criterion for bundles on odd isotropic Grassmannians (\cref{prop:vanish-crit}) as application of Borel--Bott--Weil Theorem. To prove that $\bB$ induces a full Lefschetz collection, we follow the algorithmic method developed in \cite{novikov}.

\paragraph{Overview of the work.} In \cref{subsec:except,sec:prelim-homog} we cover the preliminaries and prove more general versions (\cref{prop:vanish-even} and \cref{prop:vanish-crit}) of cohomology vanishing lemmas previously established in \cite{fonarev2020bounded}. In \cref{sec:twocoll}, we prove that $\bB_1$ and $\bB_2$ are exceptional collections extending to (non full)  rectangular Lefschetz collections. In \cref{sec:H-props}, we show that the object $\cH$ is the left mutation of $\cU^{3,0,0}$ through $\bB_1$ and that $\bB$ is an exceptional collection. In \cref{sec:full}, we show the fullness of the rectangular Lefschetz collection induced by $\bB$.

\paragraph{Notation.} 
We fix $\mathbb{C}$ as base field. Unless otherwise noted, $X=\IGr(3,9)$ with $\DX$ its bounded derived category of coherent sheaves. 
We define the graded vector space $\Ext^\bullet(-,-)$ as $\oplus_i \Hom_{\DX}(-,-[i])[-i]$.

\paragraph{Acknowledgements.} I would like thank my advisors, Alexander Kuznetsov and Ugo Bruzzo, for several insightful conversations and their constant support. I was partially supported by the national research project PRIN 2017 "Moduli theory and birational geometry".

\section{Exceptional collections}\label{subsec:except}
In this section $X$ is an arbitrary smooth projective variety. 
A standard reference for this foundational material, except where otherwise noted, is \cite{bondal1990representable}. Let $\Asc\subseteq \DX$ be a full triangulated subcategory. We define the right and left orthogonal of $\Asc$ as: 
\begin{equation*}
    \Asc^\perp=\{E\in \DX\vert \Hom(\Asc,E)=0\}, \quad \quad \quad {}^{\perp}\Asc=\{E\in \DX\vert \Hom(E,\Asc)=0\}.
\end{equation*}

\begin{defn}
Let $\Asc\subseteq \DX$ be a full triangulated subcategory. Then $\Asc$ is called admissible if the embedding functor admits left and right adjoints.
\end{defn}

\begin{defn}
A sequence of full triangulated subcategories $\Asc_1,\dots,\Asc_p \subseteq \DX$ is called a semiorthogonal collection if $\Asc_i\subseteq \Asc^\perp_j$ for $1\leq i< j\leq p$. In that case, we denote the smallest triangulated subcategory containing all the $\Asc_i$ by $\langle\Asc_1,\dots,\Asc_p\rangle\subseteq \DX$. We say that a semiorthogonal collection $\Asc_1,\dots,\Asc_p$ is a decomposition of $\DX$ if $\DX=\langle\Asc_1,\dots,\Asc_p\rangle$ and every $\Asc_i$ is admissible.
\end{defn}
If $\Asc\subseteq \DX$ is admissible, then the following:
\begin{equation*}
    \DX=\langle \Asc^\perp,\Asc \rangle, \quad \quad \quad \DX=\langle \Asc, {}^{\perp}\Asc \rangle,
\end{equation*}
are semiorthogonal decompositions.
\begin{defn}
An object $E\in \DX$ is exceptional if $\Ext^\bullet(E,E) = \mathbb{C}$.

An exceptional collection is a sequence of exceptional objects $E_1,\dots,E_p\in \DX$ with $E_i\in E_j^\perp$ for $1\leq i< j\leq p$. An exceptional collection is said to be full if $\DX=\langle E_1,\dots,E_p\rangle$.
\end{defn}
Recall that every subcategory $\Asc\subseteq\DX$ generated by an exceptional collection is admissible, \cf
\cite[Theorem~2.10]{bondal1990representable}.

The notion of rectangular Lefschetz decomposition is a natural way of studying decompositions of $\DX$ knowing a line bundle on $X$. Recall that the \emph{index} $w\geq 0 $ of a  Fano variety $X$ is the maximal positive integer such that $\omega_X = \cO(-w)$,
where $\omega_X$ is the canonical bundle and $\cO(1)$ is a primitive ample line bundle on $X$. 

\begin{defn}[{\cite[Definition~4.1]{kuznetsov2007homological}}]\label{def:Lefschetz}
A rectangular Lefschetz exceptional collection of $\DX$ with respect to $\cO(1)$ is an exceptional collection of the form:
\begin{equation*}
    \langle\, E_1,\dots,E_p, E_1(1),\dots,E_p(1),\dots, E_1(w-1),\dots,E_p(w-1)\,\rangle \subseteq \DX.
\end{equation*}
We say that $ E_1,\dots,E_p$ is the basis of the rectangular Lefschetz collection and $p$ is its length.
\end{defn}

Recall that for any $0\neq F\in \DX$, we have by Serre duality
\begin{equation*}
    \Ext^\bullet(F(w),F)= \Ext^\bullet(F,F)^*[-\dimsf  X]\neq 0;
\end{equation*}
therefore, $w$ is the maximal number of twists of $E_1,\dots,E_p$ which can be semiorthogonal. 

We state here for completeness a criterion to verify that an exceptional collection extends to a rectangular Lefschetz collection.

\begin{lemma}[{\cite[Lemma~2.18]{fonarev2020bounded}}]\label{lem:criterion-lef} 
A collection of objects $E_1,\dots,E_p \in \DX$ is a basis of a rectangular Lefschetz collection if and only if 
\begin{itemize}
    \item $E_1,\dots,E_p$ is an exceptional collection, and
    \item $\Ext^\bullet(E_j(t), E_i) = 0$ for $1\leq i \leq j \leq p$ and all $1\leq t\leq w-1$,
\end{itemize}
where $w$ is the index of $X$.
Let $\Asc\subseteq \DX $ be the smallest full triangulated subcategory containing $E_1,\dots,E_p$. If only the second condition holds, then ${\Asc, \Asc(1),\dots, \Asc(w-1)}$
is only a semiorthogonal collection.
\end{lemma}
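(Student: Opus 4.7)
The plan is to follow the outline sketched immediately after the statement. Since the index of $X=\IGr(3,9)$ is $w=7$, by \cref{lem:criterion-lef} it suffices to verify that $\bB$ is exceptional, that $\Ext^\bullet(\bB(t),\bB)=0$ for $1\le t\le 6$, and to establish fullness separately. Rather than attacking these Ext-vanishings head-on for $\cH$ starting from its definition as the totalization of \eqref{H-intro}, I would introduce the two auxiliary collections $\bB_1,\bB_2$ and realize $\cH$ as a mutation.

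First, via the vanishing criterion \cref{prop:vanish-crit} (proved by applying Borel--Bott--Weil through the embedding $X\hookrightarrow\IGr(3,10)$, where $\IGr(3,10)$ is $\Sp_{10}$-homogeneous), I would show that $\bB_1$ and $\bB_2$ are each Lefschetz bases of length $7$ (\cref{cor:bases}), together with the crucial cross-vanishing $\Ext^\bullet(\bB_2(t),\bB_1)=0$ for $1\le t\le 6$. Letting $\Bsc=\langle\bB_1,\bB_2\rangle\subseteq\DX$, applying \cref{lem:criterion-lef} to the concatenation $\bB_1\cup\bB_2$ then produces a semiorthogonal collection $\Bsc,\Bsc(1),\dots,\Bsc(6)$ in $\DX$.

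The key construction is then to identify $\cH$. Using the stupid truncations of the two staircase complexes of \cref{thm:staircase} associated to $\cU^{3,0,0}$ and $\cU^{2,0,-1}$, together with the vertical map induced by $\psi$, I would prove that the totalization $\cH$ of \eqref{H-intro} computes the left mutation of $\cU^{3,0,0}$ through the subcategory $\langle\bB_1\rangle$. Since mutations send exceptional collections to exceptional collections and preserve the generated subcategory, this simultaneously implies that $\bB=\{\cH\}\cup\bB_1$ is a full exceptional collection of $\Bsc$ (so $\bB$ is a Lefschetz basis of length $8$ by \cref{lem:criterion-lef}) and that $\bB_2\subset\langle\bB\rangle$. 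The $\Sp_9$-equivariance of $\cH$ follows from the $\Sp_9$-equivariance of the bicomplex \eqref{H-intro}, since $\psi$ is $\Sp_9$-invariant.

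It remains to prove fullness $\langle\bB,\bB(1),\dots,\bB(6)\rangle=\DX$. Here I would adopt the algorithmic approach of \cite{novikov}: starting from a known generating set of $\DX$ (e.g.\ restrictions of Kapranov-style bundles from $\IGr(3,10)$), perform a finite sequence of mutations against the rectangular block to rewrite each generator as an iterated cone of twists of objects in $\bB$. The main obstacle will be twofold: identifying $\cH$ with a left mutation requires carefully assembling the two staircase complexes of different lengths into one iterated cone whose vertex has the desired $\bB_1$-orthogonality, and the fullness step requires producing explicit resolutions showing each chosen generator lies in the expected subcategory. Both steps ultimately reduce to numerical checks powered by \cref{prop:vanish-crit} (and the parity refinement \cref{prop:vanish-even}), but organizing these checks coherently is the real content of the proof.
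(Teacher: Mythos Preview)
You have written a proof sketch for the wrong statement. The statement you were asked to prove is \cref{lem:criterion-lef}, a general criterion (valid on any smooth projective Fano variety $X$ of index $w$) characterizing when a collection $E_1,\dots,E_p$ is a Lefschetz basis. Your proposal instead sketches the proof of the main \cref{thm:final} about $\IGr(3,9)$: you \emph{invoke} \cref{lem:criterion-lef} in your first paragraph and then outline the construction of $\cH$, the mutation argument, and the fullness algorithm. None of this addresses the lemma itself.

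The actual proof of \cref{lem:criterion-lef} is a two-line Serre duality argument. One must show $\Ext^\bullet(E_j(t),E_i)=0$ for all $1\le i,j\le p$ and $1\le t\le w-1$. The second bullet in the hypothesis covers the case $i\le j$. For $i>j$, Serre duality on $X$ (with $\omega_X=\cO(-w)$) gives
\[
\Ext^\bullet(E_j(t),E_i)\;\cong\;\Ext^\bullet\bigl(E_i(w-t),E_j\bigr)^*[-\dim X],
\]
and since $j<i$ and $1\le w-t\le w-1$, the right-hand side vanishes by the same hypothesis with the roles of $i,j$ swapped. Combining this with the first bullet (so that each block $E_1(t),\dots,E_p(t)$ is itself exceptional) yields the full exceptional collection; omitting the first bullet yields only the semiorthogonality of the blocks $\Asc(t)$, which is the second claim.
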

\begin{proof}
    The second hypothesis proves one half of the required semiorthogonality conditions; that is $\Ext^\bullet(E_j(t), E_i) = 0$ for $j\geq i$. The other half follows by Serre duality. This proves the second claim. 
    
    Taking the first hypothesis into account, we also obtain the first claim.
\end{proof}

We now introduce the mutation functors with respect to an admissible subcategory $\Asc$. For any object $F\in \DX$ there are unique and functorial triangles: 
\begin{equation*}
    \mathbb{R}_{\Asc} F\rightarrow F\rightarrow F'  \quad \textrm{and} \quad F''\rightarrow F \rightarrow \mathbb{L}_{\Asc} F ,
\end{equation*}
where $F', F''\in \Asc$ while $\mathbb{R}_{\Asc} F\in {}^{\perp}\Asc$ and $\mathbb{L}_{\Asc} F \in \Asc^\perp$. Both $\mathbb{R}_{\Asc}$ and $\mathbb{L}_{\Asc}$ vanish on $\Asc$ and induce mutually inverse equivalences between ${}^{\perp}\Asc$ and $\Asc^{\perp}$. 

The mutation functors $\mathbb{R}_{\Asc}$ and $\mathbb{L}_{\Asc}$ take a more explicit form when $\Asc=\langle \,E_1,\dots, E_p\,\rangle$, where $E_1,\dots, E_p$ is an exceptional collection. It is immediate to verify:
\begin{equation*}
    \mathbb{R}_{\langle\, E_1,\dots, E_p \, \rangle} = \mathbb{R}_{E_p} \circ \dots \circ \mathbb{R}_{E_1}  \quad \textrm{and} \quad \mathbb{L}_{\langle \, E_1,\dots, E_p \, \rangle} = \mathbb{L}_{E_1} \circ \dots \circ \mathbb{L}_{E_p}.
\end{equation*}
Moreover, for any exceptional object $E$ we can write: 
\begin{equation*}
    \mathbb{R}_E F = \Cone{}(\,F\rightarrow \Ext^\bullet(F,E)^*\otimes E \,)[-1] \quad \textrm{and} \quad \mathbb{L}_E F = \Cone{}(\,\Ext^\bullet(E,F)\otimes E \rightarrow F\,),
\end{equation*}
where the morphisms are the canonical coevaluation and evaluation maps. 

\section{Cohomology on isotropic Grassmannians}\label{sec:prelim-homog}
\subsection{Grassmannians} 
Let $V$ be an $m$-dimensional vector space and let $\GL(V)$ be the corresponding linear group. For $1\leq k\leq m-1$, we denote the Grassmannian of $k$-planes in $V$ by $\Gr(k,V)$.
We recall the tautological exact sequence of bundles on $\Gr(k,V)$:
\begin{equation}\label{eq:taut-gl}
    0\rightarrow \cU \rightarrow V\otimes \cO \rightarrow \cQ \rightarrow 0,
\end{equation}
where $\cU$ is the tautological subbundle of rank $k$ and $\cQ$ is the tautological quotient bundle of rank $m-k$ . Note that the canonical sheaf satisfies:
\begin{equation*}
    \omega_{\Gr(k,V)} = \cO(-m),
\end{equation*}
where $\cO(1)= \det \cU^*$ is the ample generator of the Picard group. We use the following notation:
\begin{equation}\label{eq:Uperp}
    \cU^{\perp} = \cQ^*.
\end{equation}
In the coming sections, we discuss the preliminaries that will be needed in the computations on $\bD^b(\Gr(k,m))$: Schur functors, the Littlewood--Richardson rule and Koszul and staircase complexes. The only new result in this section is \cref{prop:split-staircase}, where we describe the restriction of a staircase complex to a smaller Grassmannian.

\subsubsection{Schur functors and Littlewood--Richardson rule} \label{ssub:BWB-GL}
We summarize here some basic facts about the representation theory of $\GL_k$ and the associated Schur functors. The weight lattice of $\GL_k$ is isomorphic to $\mathbb{Z}^k$, and its subset
\begin{equation*}
    P_k^+=\{\,\lambda\in \mathbb{Z}^k\mid \lambda_1\geq \lambda_2\geq \dots \geq \lambda_k\,\}
\end{equation*}
is the cone of dominant weights. 
Given $\lambda\in P_k^+$, we denote by $V_{\GL}^\lambda$ the $\GL(V)$-representation with highest weight $\lambda$. In particular, the symmetric and wedge powers are:
\begin{equation*}
 V_{\GL}^{p,0,\dots,0} = S^p V^*, \quad\quad\quad
    V_{\GL}^{1,\dots,1,0\dots,0} = \wedge^p V^*.
\end{equation*}
If $\lambda \in P_k^+$, we denote:
\begin{equation*}
    -\lambda=(-\lambda_k,-\lambda_{k-1},\dots, -\lambda_1) \quad \quad \textrm{and} \quad \quad \abs{\lambda}=\sum \lambda_i. 
\end{equation*}
We recall that: 
\begin{equation*}
    V_{\GL}^{-\lambda}= (V_{\GL}^\lambda)^* = (V_{\GL}^*)^{\lambda}.
\end{equation*}
There is a natural partial ordering on $P_k^+$ given by
\begin{equation}\label{eq:order}
    \mu \subseteq \lambda \Leftrightarrow \mu_i \leq \lambda_i \quad  \text{for all}
    \quad i=1,\dots,k.
\end{equation} 

Let $\cE$ be a vector bundle of rank $k$ on an algebraic variety and consider the principal $\GL_k$-bundle of its frames. Let $\cE^\lambda$ be the vector bundle associated to the representation $V_{\GL}^\lambda$. 
Once we fixed $\lambda\in P^+_k$, the functor $ \mathbf{VB}_k(X)\rightarrow \mathbf{VB}(X)$,  $\cE \mapsto \cE^\lambda$ defined as above is the \textit{Schur functor} associated to $\lambda$.
In particular, the symmetric and wedge powers of a vector bundle are Schur functors:
\begin{equation}\label{eq:Schur-powers}
     \cE^{p,0,\dots,0} = S^p\cE^* , \quad\quad\quad
    \cE^{1,\dots,1,0\dots,0} = \wedge^p \cE^*.
\end{equation}
It also follows that:
\begin{equation}\label{eq:weight-dual}
    \cE^{-\lambda}=(\cE^\lambda)^*=(\cE^*)^\lambda.
\end{equation} 

The \emph{Littlewood--Richardson rule} allows one to decompose the representation $V_{\GL}^\alpha \otimes V_{\GL}^\beta$ as a direct sum of representations $V_{\GL}^\gamma$, possibly with multiplicities. Considering their associated Schur functors, we obtain an induced $\GL$-equivariant decomposition of $\cE^\alpha \otimes \cE^\beta$ in $\cE^\gamma$. We will often use the notation 
\begin{equation*}
    \cE^\gamma \inplus \cE^\alpha \otimes \cE^\beta
\end{equation*}
to say that $\cE^\gamma$ is a direct summand of the right hand side. 
A statement of the Littlewood--Richardson rule is out of the scope of this work, for that we refer to \cite[Theorem~2.3.4]{weyman2003cohomology}. Instead, we describe some simpler special cases.

\begin{proposition}[Pieri's Formulas, {\cite[Corollary~2.3.5]{weyman2003cohomology}}]\label{prop:pieri}
Let $\lambda\in P_k^+$ and let $j$ be a positive integer. Then, there are direct sum decompositions
\begin{equation*}
    \cE^\lambda \otimes S^j \cE^* = \bigoplus_{\gamma \in \HS_\lambda^j} \cE^\gamma \quad \text{ and } \quad \cE^\lambda \otimes \wedge^j \cE^*  = \bigoplus_{\gamma \in \VS_\lambda^j} \cE^\gamma
\end{equation*}
where 
\begin{align*}
    \HS_\lambda^j &= \{\gamma \in P_k^+ \; \text{with} \;  \abs{\gamma} - \abs{\lambda}=j \; \text{and} \; \gamma_1\geq \lambda_1 \geq \gamma_2 \geq \lambda_2 \geq \dots \geq \lambda_k \geq \gamma_k\}, \\
    \VS_\lambda^j &= \{\gamma \in P_k^+  \; \text{with} \; \abs{\gamma} - \abs{\lambda}=j \; \text{and} \; \lambda_i + 1\geq \gamma_i \geq \lambda_i \; \text{for all} \; i=1,\dots, k\}.
\end{align*}
\end{proposition}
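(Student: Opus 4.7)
The plan is to derive both formulas from the Littlewood--Richardson rule applied to the two very special partitions $(j,0,\ldots,0)$ and $(1,\ldots,1,0,\ldots,0)$ (with $j$ ones), which index $S^j\cE^*$ and $\wedge^j\cE^*$ respectively (see \eqref{eq:Schur-powers}). Since Schur functors are defined pointwise on the frame bundle of $\cE$ via the $\GL_k$-representations $V_{\GL}^\lambda$ and tensor products of Schur functors decompose in the same $\GL_k$-equivariant way as the underlying representations, it suffices to establish the corresponding direct sum decompositions of $V_{\GL}^\lambda \otimes V_{\GL}^{(j,0,\ldots,0)}$ and $V_{\GL}^\lambda \otimes V_{\GL}^{(1,\ldots,1,0,\ldots,0)}$ in the representation ring of $\GL_k$.

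First I would reduce to the case where $\lambda$ is a genuine partition, i.e.\ $\lambda_k\geq 0$. If $c=\lambda_k<0$, one can write $V_{\GL}^\lambda = V_{\GL}^{\lambda-(c,\ldots,c)} \otimes (\det)^{c}$, and both conditions defining $\HS_\lambda^j$ and $\VS_\lambda^j$ are invariant under the simultaneous shift $\lambda\mapsto\lambda+(c,\ldots,c)$, $\gamma\mapsto\gamma+(c,\ldots,c)$, which merely twists every irreducible summand by $(\det)^{-c}$. Hence one may assume $\lambda\in P_k^+$ has all entries non-negative, so that classical partition-theoretic results apply.

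Next I would invoke \cite[Theorem~2.3.4]{weyman2003cohomology}: the multiplicity of $V_{\GL}^\gamma$ in $V_{\GL}^\lambda\otimes V_{\GL}^\mu$ equals the number of Littlewood--Richardson skew tableaux of shape $\gamma/\lambda$ and content $\mu$. For $\mu=(j,0,\ldots,0)$ every entry of the tableau is $1$, so the semistandard condition forces $\gamma/\lambda$ to have at most one box per column, i.e.\ to be a \emph{horizontal strip} of size $j$; such a filling is unique when it exists, giving multiplicity $1$. A direct combinatorial check shows that a skew shape $\gamma/\lambda$ with $\abs{\gamma}-\abs{\lambda}=j$ is a horizontal strip precisely when $\gamma_1\geq\lambda_1\geq\gamma_2\geq\ldots\geq\lambda_k\geq\gamma_k$, which is the description of $\HS_\lambda^j$. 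The second formula is symmetric: for $\mu=(1,\ldots,1)$, the content condition together with strict increase down columns forces the entries $1,2,\ldots,j$ to occupy distinct rows, so $\gamma/\lambda$ must be a \emph{vertical strip}, equivalently $\lambda_i\leq\gamma_i\leq\lambda_i+1$ for every $i$, which is exactly $\VS_\lambda^j$, and again each valid $\gamma$ occurs with multiplicity $1$.

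The only real obstacle is the combinatorial verification that ``horizontal (resp.\ vertical) strip of size $j$'' coincides with the interlacing (resp.\ box-by-box) inequalities in the definitions of $\HS_\lambda^j$ and $\VS_\lambda^j$, and that the Littlewood--Richardson multiplicity is exactly $1$ in each admissible case rather than larger; both are standard exercises on Young diagrams. As an alternative to running this argument in full, one may simply quote \cite[Corollary~2.3.5]{weyman2003cohomology}, as the authors do in the stated reference.
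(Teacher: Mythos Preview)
The paper gives no proof of this proposition at all: it is stated with a citation to \cite[Corollary~2.3.5]{weyman2003cohomology} and used as a black box. Your proposal --- reducing to $\GL_k$-representations, shifting to non-negative weights, and then specialising the Littlewood--Richardson rule \cite[Theorem~2.3.4]{weyman2003cohomology} to the one-row and one-column partitions --- is exactly the standard derivation, and indeed is precisely how Weyman obtains his Corollary~2.3.5 from his Theorem~2.3.4. So your argument is correct and, far from differing from the paper, simply unpacks the reference the paper points to.
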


In \cref{prop:pieri} above, the notation $\HS$ and $\VS$ comes from the representation of weights as Young diagrams. Indeed, if $\gamma \in \HS_\lambda^j$, then $\gamma\setminus \lambda$ is a \emph{horizontal strip} ($j$ boxes, at most one box per column), while if $\gamma \in \VS_\lambda^j$, then $\gamma\setminus \lambda$ is a \emph{vertical strip} ($j$ boxes, at most one box per row).
The following corollary is an immediate application of Pieri's formulas. 
\begin{corollary}\label{cor:LR-det}
Let $V$ be a vector space of rank $k$. Alternatively, let $\cE$ be a vector bundle of rank $k$. Let $\lambda\in P_k^+$ and let $l$ be an integer. Then:
\begin{equation*}
    V_{\GL}^\lambda \otimes V_{\GL}^{(l,l,\dots,l)}=V_{\GL}^{\lambda+(l,l,\dots,l)},\quad \quad \cE^\lambda \otimes \cE^{(l,l,\dots,l)}=\cE^{\lambda+(l,l,\dots,l)},
\end{equation*}
where $(l,l,\dots,l)$ denotes the vector of $P^+_k$ with $k$ entries equal to $l$.
\end{corollary}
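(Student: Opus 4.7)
The statement is a direct specialization of Pieri's formula to the extremal case where the tensor factor is a power of the determinant representation. The plan is to reduce to the case $l=1$ via Pieri's vertical-strip rule, iterate in $l$, and then handle negative $l$ by duality. Finally, the vector bundle version follows from the vector space version by applying the Schur construction to the frame bundle of $\cE$.

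First I would observe that $V_{\GL}^{(1,1,\dots,1)}=\wedge^k V^* = \det V^*$ is the one-dimensional determinant representation. To identify $V_{\GL}^\lambda \otimes \det V^*$ I would apply \cref{prop:pieri} with $j=k$: the set $\VS_\lambda^k$ consists of $\gamma\in P_k^+$ with $|\gamma|-|\lambda|=k$ and $\lambda_i\leq \gamma_i\leq \lambda_i+1$ for every $i$. Since we are forced to add exactly $k$ boxes with at most one per row across $k$ rows, the only admissible $\gamma$ is $\lambda+(1,1,\dots,1)$. Hence
\begin{equation*}
    V_{\GL}^\lambda \otimes V_{\GL}^{(1,\dots,1)} = V_{\GL}^{\lambda+(1,\dots,1)}.
\end{equation*}

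Next, for arbitrary $l\geq 0$ I would argue by induction on $l$. The base case $l=0$ is trivial, and the inductive step uses
\begin{equation*}
    V_{\GL}^{(l,\dots,l)} = V_{\GL}^{(l-1,\dots,l-1)}\otimes V_{\GL}^{(1,\dots,1)},
\end{equation*}
which itself is the $l=1$ case applied to the weight $(l-1,\dots,l-1)$. Combining with the $l=1$ identity established above yields the claim for all $l\geq 0$. For $l<0$ I would dualize: by \eqref{eq:weight-dual} one has $V_{\GL}^{-\mu} = (V_{\GL}^\mu)^*$, so tensoring both sides of the positive case with $V_{\GL}^{(-l,\dots,-l)}$ and rearranging gives the identity for negative exponents.

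Finally, for the vector bundle version I would invoke the functoriality of Schur functors: both sides are obtained by applying the associated bundle construction to the principal $\GL_k$-bundle of frames of $\cE$, and the isomorphism of $\GL_k$-representations just proved transports to an isomorphism of the associated bundles. The only (very mild) subtlety is bookkeeping of signs in the negative-$l$ case, where one must check that $(\cE^\lambda)^*\otimes \cE^{(-l,\dots,-l)}$ really matches $\cE^{-\lambda+(-l,\dots,-l)}$ under the conventions fixed in \eqref{eq:Schur-powers} and \eqref{eq:weight-dual}; this is routine. No genuine obstacle arises—the entire proof is a one-line consequence of Pieri's formula once the correct vertical strip is identified.
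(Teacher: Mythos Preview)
Your proposal is correct and follows essentially the same approach as the paper, which simply states that the corollary is an immediate application of Pieri's formulas without giving further details. Your write-up just unpacks that one line: identifying $\VS_\lambda^k=\{\lambda+(1,\dots,1)\}$, iterating in $l$, and passing to bundles via the frame construction.
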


During the course of the work, we will have to study tensor products of representations where neither weights are elementary enough to apply Pieri's formulas. In those cases, we will use the following lemmas. 

\begin{lemma}[{\cite[Proposition~2.3.1]{weyman2003cohomology}, \cite[Lemma~3.3]{guseva2020derived}}]\label{lem:sum}
Let $\alpha,\beta \in P_k^+$. Suppose
\begin{equation*}
    \cE^\gamma \inplus \cE^{-\alpha}\otimes \cE^{\beta}.
\end{equation*}
Then 
\begin{equation*}
    \beta_k-\alpha_{k+1-i}\leq \gamma_i \leq \beta_i-\alpha_k \quad \textrm{for all} \quad 1\leq i\leq k \quad \textrm{and} \quad \abs{ \gamma}=\abs{\beta}-\abs{\alpha}.
\end{equation*}
As a straightforward consequence:
\begin{equation}
\label{eq:jumps}
    \gamma_i-\gamma_{i+1} \leq (\beta_i - \beta_k)+ (\alpha_{k-i} - \alpha_k) \quad \textrm{for all} \quad 1\leq i< k.
\end{equation}
\end{lemma}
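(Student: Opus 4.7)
The plan is to reduce to the classical Littlewood--Richardson rule for partitions by tensoring with a power of the determinant, and then obtain the lower bound from the upper bound via a duality argument that swaps the roles of $\alpha$ and $\beta$.

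The equality $|\gamma|=|\beta|-|\alpha|$ will be immediate from the action of the scalar torus $\mathbb{C}^\times\subset\GL_k$, which acts on $\cE^\lambda$ by the $|\lambda|$-th power: every summand of $\cE^{-\alpha}\otimes\cE^\beta$ must carry the same central character $|\beta|-|\alpha|$.

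For the upper bound I would fix an integer $N\geq\alpha_1$ and set $\tilde\alpha=(N,\dots,N)-\alpha=(N-\alpha_k,\dots,N-\alpha_1)$ and $\tilde\gamma=\gamma+(N,\dots,N)$; both are then partitions with non-negative entries. Since $\cE^{(N,\dots,N)}=(\det\cE^*)^N$, \cref{cor:LR-det} transforms the containment $\cE^\gamma\inplus\cE^{-\alpha}\otimes\cE^\beta$ into $\cE^{\tilde\gamma}\inplus\cE^{\tilde\alpha}\otimes\cE^\beta$, so the Littlewood--Richardson coefficient $c^{\tilde\gamma}_{\tilde\alpha,\beta}$ is positive. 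The classical LR inequality $\tilde\gamma_i\leq\beta_i+\tilde\alpha_1$---which ultimately rests on the fact that entries in row $i$ of any LR skew tableau are bounded by $i$ (a consequence of the Yamanouchi condition; see \cite[Proposition~2.3.1]{weyman2003cohomology})---then yields, after subtracting $N$, the desired $\gamma_i\leq\beta_i-\alpha_k$.

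For the lower bound I would dualize. Applying $(-)^*$ to $\cE^\gamma\inplus\cE^{-\alpha}\otimes\cE^\beta$ and using \eqref{eq:weight-dual} gives $\cE^{-\gamma}\inplus\cE^{\alpha}\otimes\cE^{-\beta}$; commuting the tensor factors rewrites this as $\cE^{-\gamma}\inplus\cE^{-\beta}\otimes\cE^{\alpha}$, which is exactly the hypothesis of the lemma with $\gamma$ replaced by $-\gamma$ and with the roles of $\alpha$ and $\beta$ interchanged. Applying the upper bound just established produces $(-\gamma)_i\leq\alpha_i-\beta_k$; rewriting $(-\gamma)_i=-\gamma_{k+1-i}$ and relabeling $j=k+1-i$ gives $\gamma_j\geq\beta_k-\alpha_{k+1-j}$. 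The jump inequality \eqref{eq:jumps} then follows immediately by subtracting the lower bound $\gamma_{i+1}\geq\beta_k-\alpha_{k-i}$ from the upper bound $\gamma_i\leq\beta_i-\alpha_k$. The only non-formal step is the LR inequality $\tilde\gamma_i\leq\beta_i+\tilde\alpha_1$, which I would invoke from the reference rather than re-derive combinatorially.
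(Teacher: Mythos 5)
The paper does not prove this lemma; it is quoted directly from \cite[Proposition~2.3.1]{weyman2003cohomology} and \cite[Lemma~3.3]{guseva2020derived}, so there is no in-text argument to compare against. Your blind proof is a sound, self-contained derivation. The reduction to the honest Littlewood--Richardson setting by tensoring with $(\det\cE^*)^N$, the use of the classical row-bound $\nu_i\leq\lambda_1+\mu_i$ (which, as you note, comes from the fact that each entry of row $i$ of a lattice skew tableau equals $i$ in every column to the right of $\lambda_1$), the duality $\cE^\gamma\mapsto\cE^{-\gamma}$ swapping $\alpha\leftrightarrow\beta$ to obtain the lower bound, and the direct subtraction yielding \eqref{eq:jumps} are all correct. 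The central-character argument for $\abs{\gamma}=\abs{\beta}-\abs{\alpha}$ is also fine (the sign in the central character is immaterial).

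One small point to tighten: you assume $\tilde\alpha$ and $\tilde\gamma$ become genuine partitions once $N\geq\alpha_1$, but the hypothesis only asks $\alpha,\beta\in P_k^+$, so $\beta_k$ may be negative and $\gamma_k+N$ need not be nonnegative for that choice of $N$. This is harmless---shift $\beta$ and $\gamma$ by a further $M\mathbf{1}$ using \cref{cor:LR-det} (the target inequality $\gamma_i\leq\beta_i-\alpha_k$ is invariant under such simultaneous shifts), and note that once $\tilde\alpha$ and $\beta+M\mathbf{1}$ are partitions, every dominant $\tilde\gamma$ occurring in their tensor product automatically has $\tilde\gamma_k\geq\tilde\alpha_k+(\beta_k+M)\geq 0$, since any weight $\mu$ of $V_{\GL}^{\beta+M\mathbf 1}$ satisfies $\mu_i\geq\beta_k+M$. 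With that remark inserted, the proof is complete.
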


\begin{lemma}\label{lem:Schur}

Let $\alpha, \beta \in P_k^+$, and let $l\in \mathbb{Z}$. Let
\begin{equation*}
    \cE^{-\alpha}\otimes \cE^{\beta} = \bigoplus_\gamma \cE^\gamma
\end{equation*}
be the Littlewood--Richardson decomposition. Then the following statements are equivalent:
\begin{enumerate}
    \item\label{it:eq1} $\beta - \alpha  = (l,\dots,l)$,
    \item\label{it:eq2} the weight $(l,\dots,l)$ appears once and only once among $\gamma$.
    \item\label{it:eq3} the weight $(l,\dots,l)$ appears among $\gamma$.
\end{enumerate}
\end{lemma}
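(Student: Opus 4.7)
The plan is to compute directly the multiplicity of the one-dimensional Schur functor $\cE^{(l,\ldots,l)}$ inside $\cE^{-\alpha}\otimes\cE^\beta$ by passing to the $\GL_k$-representation theory controlling the Littlewood--Richardson rule, and then to invoke Schur's lemma.

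First I would note that (2) trivially implies (3), so it suffices to establish the two implications (3) $\Rightarrow$ (1) and (1) $\Rightarrow$ (2). Since the decomposition of a tensor product of Schur functors applied to a vector bundle is controlled fiberwise by the decomposition of the corresponding $\GL_k$-representations, the multiplicity of $\cE^{(l,\ldots,l)}$ in $\cE^{-\alpha}\otimes\cE^\beta$ equals the dimension of
\begin{equation*}
\Hom_{\GL_k}\!\bigl(V_{\GL}^{(l,\ldots,l)},\, V_{\GL}^{-\alpha}\otimes V_{\GL}^{\beta}\bigr) \;=\; \Hom_{\GL_k}\!\bigl(V_{\GL}^{(l,\ldots,l)}\otimes V_{\GL}^{\alpha},\, V_{\GL}^{\beta}\bigr),
\end{equation*}
using that $V_{\GL}^{-\alpha}=(V_{\GL}^{\alpha})^*$. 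By \cref{cor:LR-det}, the tensor product on the left collapses to the irreducible representation $V_{\GL}^{\alpha+(l,\ldots,l)}$ of highest weight $\alpha+(l,\ldots,l)$. Schur's lemma then tells us that this $\Hom$-space is one-dimensional precisely when $\beta=\alpha+(l,\ldots,l)$, and zero otherwise.

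This single computation yields everything at once: if $\cE^{(l,\ldots,l)}$ appears in the decomposition, the multiplicity must be $1$, and the dominant weight $\beta$ must equal $\alpha+(l,\ldots,l)$, giving both (3) $\Rightarrow$ (1) and (3) $\Rightarrow$ (2); conversely, if $\beta-\alpha=(l,\ldots,l)$ then the same computation shows the multiplicity equals $1$, giving (1) $\Rightarrow$ (2).

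I do not anticipate any serious obstacle: the argument is essentially Schur's lemma combined with the elementary observation (\cref{cor:LR-det}) that tensoring by a power of the determinant shifts weights by $(l,\ldots,l)$. The only minor subtlety is keeping track of the sign convention $-\alpha=(-\alpha_k,\dots,-\alpha_1)$ fixed in the notation section, which is why the identification $V_{\GL}^{-\alpha}=(V_{\GL}^{\alpha})^*$ recorded after \eqref{eq:Schur-powers} is used at the outset; once that is invoked, the proof is essentially a one-line adjunction.
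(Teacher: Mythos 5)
Your proof is correct and is essentially identical to the paper's: both compute the multiplicity of $\cE^{(l,\dots,l)}$ via $\Hom_{\GL_k}(V_{\GL}^{(l,\dots,l)}\otimes V_{\GL}^{\alpha}, V_{\GL}^{\beta})$, collapse the left factor to $V_{\GL}^{\alpha+(l,\dots,l)}$ using \cref{cor:LR-det}, and conclude by Schur's lemma. No meaningful difference in route or level of detail.
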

\begin{proof}
By Schur's Lemma, $\cE^{(l,\dots,l)}$ is a direct summand of the Littlewood--Richardson decomposition of $\cE^{-\alpha}\otimes \cE^{\beta}$ if and only if 
\begin{equation*}
\Hom(V_{\GL}^{(l,\dots,l)}, V_{\GL}^{-\alpha}\otimes V_{\GL}^{\beta}) \neq 0.
\end{equation*}
On the other hand:
\begin{equation*}
    \Hom(V_{\GL}^{(l,\dots,l)}, V_{\GL}^{-\alpha}\otimes V_{\GL}^{\beta}) =\Hom(V_{\GL}^{(l,\dots,l)}\otimes V_{\GL}^{\alpha}, V_{\GL}^{\beta}) =  \Hom(V_{\GL}^{\alpha + (l,\dots,l)}, V_{\GL}^{\beta}),
\end{equation*}
where the first equality holds by \eqref{eq:weight-dual} and the second is an application of \cref{cor:LR-det}.
The last $\Hom$-space being nonzero is equivalent to $\alpha + (l,\dots,l) = \beta$. As $\dimsf  \Hom(V_{\GL}^{\alpha + (l,\dots,l)}, V_{\GL}^{\beta})$ equals the multiplicity of $V_{\GL}^\beta$ in $V_{\GL}^{\alpha + (l,\dots,l)}$, if it is nonzero, it must be $1$.
\end{proof}

\subsubsection{Koszul and staircase complexes}\label{sec:koszul}

We recall here some exact sequences on $\Gr(k,m)$. For each $p$, the tautological sequence \eqref{eq:taut-gl} induces the following exact sequence, which is called the \emph{Koszul complex}:
\begin{equation}\label{eq:Koszul-Symm}
    0\rightarrow \wedge^p \cU^\perp \rightarrow \wedge^p V^*\otimes \cO \rightarrow \dots \rightarrow V^*\otimes  S^{p-1} \cU^*\rightarrow S^p \cU^* \rightarrow 0.
\end{equation}
Applying Borel--Bott--Weil Theorem (see \cite{demazure1976very}), it is immediate to see that the differentials in \eqref{eq:Koszul-Symm} are the unique nonzero $\GL(V)$-equivariant maps between the terms in the complex.

We will now introduce the second and most important family of exact sequences for this work, \emph{staircase complexes}, 
introduced in \cite{fonarev2013minimal}. First we recall the general theory. 
Let $\lambda=(\lambda_1,\lambda_2,\dots, \lambda_k)\in P_k^+$, with $\lambda_1 = m-k$ and $\lambda_k\geq 0$. We define
\begin{equation*}
    \lambda'=(\lambda_2,\dots, \lambda_k, 0)\in P_k^+.
\end{equation*}

\begin{theorem}[Staircase complex,\,{\cite[Proposition 5.3]{fonarev2013minimal}}]\label{thm:staircase}
Let $\lambda=(\lambda_1,\lambda_2,\dots, \lambda_k)\in P_k^+$ with $\lambda_1 = m-k$ and $\lambda_k\geq 0$. Then there exist a sequence of weights $\{\mu_i\}_{i\in\{1,\dots, m-k\}}\in P_k^+$ and an exact sequence:
\begin{equation}\label{eq:staircase}
    0\rightarrow \cU^{\lambda'}(-1)\xrightarrow {\gamma_{m-k+1}} \wedge^{\nu_{m-k}}V^*\otimes\cU^{\mu_{m-k}}\xrightarrow {\gamma_{m-k}} \dots \xrightarrow {\gamma_{2}} \wedge^{\nu_1}V^*\otimes \cU^{\mu_1}\xrightarrow {\gamma_1}\cU^{\lambda}\rightarrow 0, 
\end{equation} 
where $\nu_i = \abs{\lambda}-\abs{\mu_i}$ and the morphisms $\{\gamma_i\}_{i\in \{1,\dots,m-k+1\}}$ are the unique nonzero $\GL(V)$-equivariant maps between the terms of the complex. The sequence $\{\mu_i\}_{i\in\{1,\dots, m-k\}}$ is totally ordered by inclusion; moreover, the weights $\mu_i$  are all positive and are contained in $\lambda$. 
\end{theorem}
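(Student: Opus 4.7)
The plan is to construct the complex explicitly and prove exactness via a geometric realization. First I would specify the weights $\mu_i$ by a combinatorial staircase recipe on Young diagrams: starting from $\lambda$ (with top row of maximal length $\lambda_1 = m-k$), define $\mu_i$ by successively removing boxes along the staircase contour of $\lambda$, in such a way that $\mu_1 \subsetneq \mu_2 \subsetneq \dots \subsetneq \mu_{m-k}$ is a chain of positive dominant weights contained in $\lambda$, with $|\lambda| - |\mu_i| = \nu_i$ matching the prescribed ranks of exterior powers. The assumption $\lambda_1 = m-k$ is what leaves the correct ``room'' for this procedure and forces the left-end term to be $\cU^{\lambda'}(-1)$ rather than a larger bundle.

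Next, the existence and uniqueness of the morphisms $\gamma_i$ are essentially forced by Schur's lemma. The space of $\GL(V)$-equivariant maps between adjacent terms is controlled by the multiplicity of $V_{\GL}^{\mu_{i-1} - \mu_i}$ inside $\wedge^{\nu_{i-1} - \nu_i} V^*$; the staircase construction guarantees that $\mu_{i-1} - \mu_i$ is a $0/1$-valued dominant weight, so this multiplicity is exactly one by Pieri's formula (\cref{prop:pieri}). The same sort of Littlewood--Richardson analysis, now using \cref{lem:sum} to bound the weights appearing in $\wedge^{\nu_{i+1}-\nu_{i-1}} V^* \otimes \cU^{\mu_{i+1} - \mu_{i-1}}$, shows that no equivariant map exists between two-apart terms, hence $\gamma_i \circ \gamma_{i+1} = 0$ automatically.

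The main obstacle is proving exactness, and this is where I would follow Fonarev's geometric argument. Consider a partial flag variety such as $\IFl(1, k; V)$ (or a tower thereof) with its two projections $p, q$ to $\mathbb{P}(V)$ and $\Gr(k, V)$; the fibers of $q$ are projective spaces, over which one writes down a relative Koszul-type complex. Pushing this forward along $q$ using Borel--Bott--Weil fiberwise produces the terms $\wedge^{\nu_i} V^* \otimes \cU^{\mu_i}$ in the correct degrees, with the staircase weights emerging from the Bott computation on $\mathbb{P}^{k-1}$. Alternatively, one can induct on $|\lambda|$: the base case $\lambda = (m-k, 0, \dots, 0)$ is the Koszul complex \eqref{eq:Koszul-Symm}, and the inductive step splices the Koszul complex twisted by $\cU^{\lambda'}$ with a smaller staircase, using Pieri to decompose and extract the desired sequence. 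Either way, the subtle part is verifying that the terms appearing are exactly the staircase ones with the correct multiplicity one, and that the left end truncates precisely to $\cU^{\lambda'}(-1)$; once this is in place the uniqueness from Schur's lemma guarantees that the resulting differentials are (up to scaling) the $\gamma_i$ in the statement.
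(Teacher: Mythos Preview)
The paper does not prove this theorem; it is quoted from \cite[Proposition~5.3]{fonarev2013minimal} and used as a black box (the paper even says ``We do not report the construction of the weights $\mu_i$, except in the case $k=3$''). So there is no proof in the paper to compare against. Your sketch is broadly in the spirit of Fonarev's original argument---the geometric realization via a partial flag variety and a fiberwise Koszul complex pushed forward by Borel--Bott--Weil is exactly how the staircase complex is built in \cite{fonarev2013minimal}.

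Two small points worth fixing. First, your chain of inclusions is reversed: from $\nu_i = |\lambda| - |\mu_i|$ and the fact that $\nu_i$ increases as one moves leftward in the complex, one has $\mu_{m-k} \subsetneq \dots \subsetneq \mu_1 \subsetneq \lambda$, not the other way around (compare \eqref{eq:weights}). Second, your argument for $\gamma_i \circ \gamma_{i+1} = 0$ via ``no equivariant map between two-apart terms'' is not cleanly formulated: the expression $\cU^{\mu_{i+1} - \mu_{i-1}}$ does not make sense, since $\mu_{i+1} - \mu_{i-1}$ is typically not dominant. The vanishing of the composite does follow from an equivariant Hom computation, but the correct bookkeeping goes through $\Hom_{\GL(V)}(\wedge^{\nu_{i+1}} V^*, \wedge^{\nu_{i-1}} V^* \otimes \H^0(\cU^{\mu_{i-1}} \otimes (\cU^{\mu_{i+1}})^*))$ and a Littlewood--Richardson analysis of that global section space. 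In practice Fonarev sidesteps this by constructing the complex geometrically, so that it is a complex by construction and exactness is the only thing requiring proof.
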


We do not report the construction of the weights $\mu_i$, except in the case $k=3$ (see \eqref{eq:weights} below), which is the only case used in the body of the paper. Note however that if $\lambda_1>\lambda_2$, then $\mu_1=(\lambda_1-1, \lambda_2, \dots, \lambda_k)$.

\begin{remark}\label{rem:other-stair}
Notice that the conditions $\lambda_1=m-k$ and $\lambda_k\geq 0$ can be achieved for any $\lambda\in P_k^+$ with $\lambda_1-\lambda_k\leq m-k$ by twisting appropriately. We will still refer to this complex as the staircase complex associated to $\cU^{\lambda}$. With the notation of \cref{thm:staircase}, we denote both the acyclic complex defined above and the one obtained in \cref{rem:other-stair} as $\Stair(\cU^{\lambda})$. 
\end{remark}

We now consider the problem of the restriction of a staircase complex on $\Gr(k,m+1)$ to $\Gr(k,m)$. Let $V$ be a $m$-dimensional vector space. Fix a $(m+1)$-dimensional vector space $\wV$ such that $V\subset \widetilde{V}$ is a hyperplane, then we obtain an induced embedding $j:\Gr(k, V)\rightarrow \Gr(k, \widetilde{V})$. Let $\widetilde{\cU}$ be the tautological bundle on $ \Gr(k, \widetilde{V})$. Recall that: 
\begin{equation}\label{eq:repeat}
    j^*(\widetilde{V}\otimes \cO)\cong (V\otimes \cO)\oplus \cO,  \quad j^*\widetilde{\cU}^\lambda \cong \cU^\lambda.
\end{equation}  
for every $\lambda\in P^+_k$.  We prove our result.
\begin{proposition}\label{prop:split-staircase}
    Let $\lambda\in P^+_k$ with $\lambda_1 = m+1-k > \lambda_2$ and $\lambda_k\geq 1$. Let $\Stair(\widetilde{\cU}^{\lambda})$ be the associated staircase complex on $\Gr(k,m+1)$. Then the restriction of the complex to $\Gr(k,m)$ splits:
    \begin{equation*}
        j^*\Stair(\widetilde{\cU}^{\lambda})\cong \Stair(\cU^{\lambda}) \oplus \Stair(\cU^{\mu_1})[1].
    \end{equation*}
\end{proposition}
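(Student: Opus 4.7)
The plan is to decompose $j^*\Stair(\widetilde\cU^\lambda)$ termwise and then organize it as a direct sum of subcomplexes. Fix a complement $\widetilde V = V \oplus \langle e\rangle$, so that $j^*\widetilde V^* \cong V^* \oplus \langle e^*\rangle$. Restricting termwise yields
\[
j^*(\wedge^{\nu_i}\widetilde V^* \otimes \widetilde\cU^{\mu_i}) \cong (\wedge^{\nu_i}V^* \otimes \cU^{\mu_i}) \oplus (\wedge^{\nu_i-1}V^* \otimes \cU^{\mu_i}),
\]
while the end terms restrict to $\cU^\lambda$ and $\cU^{\lambda'}(-1)$ by \eqref{eq:repeat}. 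I will call the first summand the \emph{$V^*$-part} and the second the \emph{$e^*$-part}, according to the number of $e^*$ factors.

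The key step is to verify that the restricted differentials respect this grading, up to the reshuffling of end terms. The staircase differentials are the unique (up to scalar) $\GL(\widetilde V)$-equivariant maps between their source and target, and they are built from natural contractions with the tautological inclusion $\widetilde\cU \hookrightarrow \widetilde V \otimes \cO$. Under restriction, this inclusion factors through $\cU \hookrightarrow V \subset \widetilde V$, so the $e^*$-direction pairs trivially with $\cU$. This forces certain off-diagonal blocks of the restricted differentials to vanish: for example, at the far left, the differential $\widetilde\cU^{\lambda'}(-1) \to \wedge^{\nu_{m+1-k}}\widetilde V^* \otimes \widetilde\cU^{\mu_{m+1-k}}$ is essentially interior multiplication by sections of $\widetilde\cU$ against a top form on $\widetilde V$, which upon restriction takes values purely in the $e^*$-part. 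Analogous local computations at each position give a block decomposition of $j^*\Stair(\widetilde\cU^\lambda)$ into two subcomplexes.

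Finally, one identifies each subcomplex with the claimed summand. Three inputs are used: (i) the equality $\lambda' = \mu_1'$, following from $\mu_1 = (\lambda_1 - 1, \lambda_2, \ldots, \lambda_k)$, which assigns the restricted $\cU^{\lambda'}(-1)$ to the leftmost end of $\Stair(\cU^{\mu_1})[1]$; (ii) the recursive structure of the staircase, so that the weights $\mu_i$ for $i\geq 2$ of $\Stair(\widetilde\cU^\lambda)$ coincide with the middle weights of $\Stair(\cU^{\mu_1})$; and (iii) the identity $\wedge^m V^* \cong \cO$, which simplifies the leftmost middle term of the $V^*$-part and matches it with the leftmost end of the twisted staircase $\Stair(\cU^\lambda)$ defined in \cref{rem:other-stair}. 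With the terms matched, Borel--Bott--Weil and Schur's lemma force the restricted differentials to coincide with those of the two claimed staircase complexes up to invertible scalars.

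The main obstacle is the analysis of the block structure in Step 2: while the termwise decomposition is straightforward, checking which off-diagonal components of the restricted differentials vanish requires an explicit description of the staircase differentials in terms of natural contractions with the tautological inclusion, and this must be examined at each position of the complex. A further subtlety is the bookkeeping in Step 3, since the twist used in \cref{rem:other-stair} shifts the indexing of the terms of $\Stair(\cU^\lambda)$ relative to the restricted complex, so the end-to-end alignment must be verified with care.
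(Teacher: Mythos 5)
Your approach is genuinely different from the paper's. The paper does \emph{not} analyze the termwise splitting of the restricted complex. Instead, it invokes the characterization from \cite[Lemma~5.1]{fonarev2013minimal}: $\Stair(\widetilde\cU^\lambda)$ is the decomposition of $\Cone(\varepsilon)$ with respect to the exceptional collection $\langle\,\widetilde\cU^\gamma \mid 0\subseteq\gamma\subseteq\mu_1\,\rangle$, where $\varepsilon$ is the unique nonzero class in $\Ext^{m+1-k}(\widetilde\cU^\lambda,\widetilde\cU^{\lambda'}(-1))$. The whole proof then reduces to a single Borel--Bott--Weil computation showing $j^*\varepsilon = 0$: the relevant cohomology on $\Gr(k,m)$ is concentrated in degree $m-k$, not $m+1-k$. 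Thus $j^*\Cone(\varepsilon)$ splits as a direct sum of two shifted objects, each of which has a unique decomposition with respect to the restricted exceptional collection; matching these decompositions with the two staircase complexes finishes the proof. This sidesteps any analysis of the explicit differentials.

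Your termwise approach can in principle work, and you have correctly identified where it is delicate, but the gap you flag in Step 2 is more serious than your sketch suggests. The claim that the restricted differentials are block-diagonal with respect to the $V^*$/$e^*$ split cannot be settled by the heuristic that ``the $e^*$-direction pairs trivially with $\cU$,'' because the staircase differentials are not given to you as explicit contractions — they are defined implicitly as the unique $\GL$-equivariant maps (or as components of the decomposition with respect to an exceptional collection), and the leftmost differential $\widetilde\cU^{\lambda'}(-1)\to\wedge^{\nu_{m+1-k}}\widetilde V^*\otimes\widetilde\cU^{\mu_{m+1-k}}$ in particular does not look like contraction with a section of $\widetilde\cU$. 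A rigorous route to block-diagonality would instead use equivariance under the parabolic $P\subset\GL(\widetilde V)$ stabilizing $V$: after fixing a splitting $\widetilde V = V\oplus\langle e\rangle$ and restricting to the Levi $\GL(V)\times\mathbb{C}^*$, the $V^*$/$e^*$ decomposition becomes a $\mathbb{C}^*$-weight decomposition and equivariance kills the off-diagonal blocks. But this in turn requires carefully normalizing the $\GL(\widetilde V)$-equivariant structure of the end terms of the staircase complex: as stated in \cref{thm:staircase} the end terms $\widetilde\cU^\lambda$ and $\widetilde\cU^{\lambda'}(-1)$ carry no $\wedge^\bullet\widetilde V^*$ factor, and with the naive equivariant structure there is in fact no nonzero $\GL(\widetilde V)$-equivariant map at the leftmost position; the equivariance is restored only after twisting an end term by a determinant character, and this twist is exactly what gives the leftmost restricted term $\mathbb{C}^*$-weight $-1$ (so that it lands in the $\Stair(\cU^{\mu_1})[1]$ summand as it should). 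None of this bookkeeping appears in your sketch, and without it the weight/grading argument as written is inconsistent. The paper's cone-and-extension-class argument is considerably cleaner precisely because it avoids having to pin down the differentials and their equivariant normalization at all.

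Your Step 3 is in better shape: once the block decomposition is secured and the bundles are matched degree by degree (using $\mu_1'=\lambda'$, $\wedge^m V^*\cong\cO$, and the twist in \cref{rem:other-stair}), Schur's lemma plus acyclicity of the restricted complex does force the block differentials to be the staircase differentials up to nonzero scalar — a zero differential would create cohomology. But this last step is only as solid as Step 2.
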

\begin{proof}
   By \cite[Lemma~5.1]{fonarev2013minimal},  $\Stair(\widetilde{\cU}^{\lambda})$ can be characterized in the following way:
    \begin{equation}\label{eq:eps-repr}
        \varepsilon\in \Ext^{m+1-k}(\widetilde{\cU}^{\lambda}, \, \widetilde{\cU}^{\lambda'}(-1))\cong\H^{m+1-k}(\Gr(k,m+1),\, \widetilde{\cU}^{-1,\dots,-1,-(m+1-k)-1}\,)=\mathbb{C},
    \end{equation}
    where $\varepsilon$ is the unique nonzero extension. By \cref{thm:staircase}, the central part of $\Stair(\wcU^{\lambda})$ is obtained by decomposing the complex 
    \begin{equation}\label{eq:gr-cone}
        {\Cone(\, \widetilde{\cU}^{\lambda}\,\xrightarrow \varepsilon \,\widetilde{\cU}^{\lambda'}(-1)[m+1-k]\,)}.
    \end{equation} 
    with respect to the exceptional collection on $\Gr(k,m+1)$ given by $\langle \, \widetilde{\cU}^{\gamma}\, \mid \, 0\subseteq \gamma \subseteq \mu_1 \subset \lambda\, \, \rangle$ (see \cite[Theorem 2.1]{fonarev2013minimal}).  Since $\lambda_1> \lambda_2$, we have $\mu_1 = (m-k,\,\lambda_2,\,\dots,\,\lambda_k)$; hence, the restriction of this collection to $\Gr(k,m)$:
    \begin{equation}\label{eq:sod-Gr(k,m)}
     \langle \, 
    {\cU}^\gamma\, \mid \, 0\subseteq \gamma \subseteq \mu_1 \, \rangle=\langle \, j^*\widetilde{\cU}^{\gamma}\, \mid \,  0\subseteq \gamma \subseteq \mu_1  \, \rangle ,   
    \end{equation}
    is an exceptional collection as well, by \cite[Theorem 2.1]{fonarev2013minimal}. 

     We now prove that $j^*\varepsilon = 0$ on $\Gr(k,m)$. 
    Similarly to the proof of \cite[Lemma~5.1]{fonarev2013minimal}, we apply Borel--Bott--Weil Theorem on $\Gr(k,m)$ (see \cite{demazure1976very}) obtaining: 
    \begin{equation}\label{eq:cohom}
        \H^p(\Gr(k,m), \, \cU^{-1,\dots,-1,-(m+1-k)-1}\,)=\begin{cases*}
            V\otimes \wedge^m V & if $p=m-k$,\\
            0 & otherwise.
        \end{cases*}
    \end{equation}
    As $j^*\varepsilon\in \H^{m+1-k}(\Gr(k,m), \, j^*{\wcU}^{-1,\dots,-1,-(m+1-k)-1}\,)=0$ by \eqref{eq:eps-repr} and \eqref{eq:cohom}, we obtain:
    \begin{equation*}
j^*\Cone(\varepsilon)=\Cone(j^*\varepsilon)=\Cone(0)=\cU^{\lambda}[1]\oplus \cU^{\lambda'}(-1)[m-k].
    \end{equation*} 
     
    Now, consider the left resolution of $j^*\widetilde{\cU}^{\lambda}=\cU^{\lambda}$ given by the staircase complex on $\Gr(k,m)$ (\cf \cref{rem:other-stair}). 
    We introduce the following weights: 
    \begin{equation*}
        \overline{\lambda}=(\lambda_1-1,\dots,\lambda_{k}-1)\in P_k^+ \quad \quad \textrm{and} \quad \quad  \overline{\lambda}'=(\lambda_2-1,\dots,\lambda_k-1,0)\in P_k^+.
    \end{equation*}
    Notice that $\overline{\lambda}$ satisfies the conditions in \cref{thm:staircase}.
    As a consequence, we find that 
    \begin{equation*}
        \cU^{\lambda}=\cU^{\overline{\lambda}}(1)\in \langle \, \cU^{\overline{\lambda}'},\,
    {\cU}^\gamma\, \mid \, (1,\dots,1)  \subseteq \gamma \subseteq \mu_1  \, \rangle \subseteq \langle \, 
    {\cU}^\gamma\, \mid \, 0\subseteq \gamma \subseteq \mu_1 \, \rangle,
    \end{equation*}
    where the rightmost term is the exceptional collection in \eqref{eq:sod-Gr(k,m)}.
    
    On the other hand, $\cU^{\lambda'}(-1)$ belongs to \eqref{eq:sod-Gr(k,m)} because it is the leftmost term of $\Stair(\cU^{\mu_1})$ in $\Gr(k,m)$, as $\mu_1=(m-k, \lambda_2, \dots, \lambda_k)$. As a consequence, we can recover $\Stair(\cU^{\mu_1})$ as the decomposition of $\cU^{\lambda'}(-1)$ with respect to \eqref{eq:sod-Gr(k,m)}. Taking the direct sum of these resolutions finally provides a decomposition of $\cU^{\lambda}[1]\oplus \cU^{\lambda'}(-1)[m-k]$ in terms of \eqref{eq:sod-Gr(k,m)}.

    By construction, the central truncation of $j^*\Stair(\wcU^{\lambda})$ gives a resolution  of $j^*\Cone(\varepsilon)$  with respect to \eqref{eq:sod-Gr(k,m)}. By uniqueness of the decomposition, it must agree with the decomposition given by the direct sum of the truncated staircase complexes. This proves the claim.
    \end{proof}

We restate here the description of a staircase complex on $\Gr(3,m)$. In \cref{thm:staircase}, fix any $\GL_3$-dominant weight $\lambda=(a,0,-b)$ such that $a+b\leq m-3$. Then the following collection of $m-1$ weights $\{\mu_i\}_{i\in\{0,\dots, m-2\}}$ of $\GL_3$
\begin{equation}\label{eq:weights}
    \mu_i :=\begin{cases*}
    (a-i,0,-b),  & \textrm{if} \quad $0\leq i\leq a;$\\
    (-1,a-i,-b),  & \textrm{if} \quad $a< i\leq a+b;$\\
    (-1,-b-1,a-i), & \textrm{if} \quad
    $a+b< i\leq m-2$.
    \end{cases*}
\end{equation}
are the weights appearing in the statement; note that $\mu_0=\lambda$.

\subsection{Even isotropic Grassmannian}

Let $W$ be a $2n$-dimensional vector space endowed with a symplectic form $\psi$ and let $\Sp(W)$ be the corresponding symplectic group. Note that $\psi$ induces a $\Sp(W)$-equivariant isomorphism $W\cong W^*$, which we will use quite often. 

For $1\leq k\leq n$, we denote the isotropic Grassmannian of $k$-planes in $W$ by $\IGr(k,W)$. We often refer to $\IGr(k,W)$ as the \textit{even isotropic Grassmannian} to emphasize the difference with the \textit{odd isotropic Grassmannian} defined in \cref{subsec:odd-grass}.

The variety $\IGr(k,W)$ is isomorphic to the zero locus of
\begin{equation*}
    \psi \in \wedge^2 W^* = \H^0(\Gr(k,W),\wedge^2 \cU^*).
\end{equation*}
The tautological sequence on $\IGr(k,W)$ is the restriction of the one in $\Gr(k,W)$, i.e.:
\begin{equation}\label{eq:taut}
    0\rightarrow \cU\rightarrow W\otimes \cO \rightarrow \cQ \rightarrow 0.
\end{equation}
The symplectic form induces a canonical embedding $\cU\hookrightarrow \cU^{\perp}$ (\cf \eqref{eq:Uperp}), so that we can define the quotient bundle $\cS$:
\begin{equation}\label{eq:S-def}
    0\rightarrow\cU\rightarrow \cU^\perp\rightarrow \cS\rightarrow 0.
\end{equation}
Additionally, $\cS$ is also the cohomology sheaf in the middle term of the complex:
\begin{equation}\label{eq:symplectic}
    0\rightarrow\cU\rightarrow W\otimes \cO\xrightarrow \psi \cU^* \rightarrow 0.
\end{equation}
Notice that $\cS$ is a bundle of $\ranksf \cS =  2(n-k)$, endowed with a symplectic isomorphism $\cS\cong \cS^*$ induced by $\psi$. The following result is well known, \cf \cite[Lemma~2.19, Proposition~9.7]{kuznetsov2016exceptional}.

\begin{lemma}\label{lem:even-inv}
The dimension $d$ and the index $w$ of $\IGr(k,2n)$ are respectively: 
\begin{equation*}
    d=\frac{k(4n-3k+1)}{2}, \quad w = 2n+1-k.
\end{equation*}
Moreover, the Grothendieck group is a free abelian group and its rank $r$ is:
\begin{equation*}
    r = \binom{n}{k}\cdot 2^k.
\end{equation*}
\end{lemma}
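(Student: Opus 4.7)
The plan is to realize $\IGr(k,2n)$ as the zero locus of the regular section $\psi \in \H^0(\Gr(k,2n), \wedge^2\cU^*)$ inside the ambient Grassmannian $\Gr(k,2n)$, and to derive $d$ and $w$ from the ranks and canonical class of $\Gr(k,2n)$ by adjunction. The rank $r$ will come from the Schubert/Bruhat cell decomposition of the homogeneous space $\Sp_{2n}/\mathsf{P}_k$.

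First, I would compute $d$. Since $\wedge^2\cU^*$ has rank $\binom{k}{2}$ and $\psi$ cuts out $\IGr(k,2n)$ with the expected codimension (a regular section, as $\Sp_{2n}/\mathsf{P}_k$ is known to have this dimension, or alternatively by a direct tangent space calculation at a general isotropic plane), we obtain
\[
d = \dimsf \Gr(k,2n) - \tbinom{k}{2} = k(2n-k) - \tfrac{k(k-1)}{2} = \tfrac{k(4n-3k+1)}{2}.
\]
Next, I would handle the index via adjunction applied to this regular embedding. The conormal bundle is $(\wedge^2\cU^*)^*|_{\IGr(k,2n)}$, and for any rank $k$ bundle $\cE$ one has $\det(\wedge^2\cE) = (\det \cE)^{k-1}$. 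Combined with the standard formula $\omega_{\Gr(k,2n)} = \cO(-2n)$ on the ambient Grassmannian, this gives
\[
\omega_{\IGr(k,2n)} = \cO(-2n) \otimes \cO(k-1) = \cO(-(2n+1-k)),
\]
and hence $w = 2n+1-k$, as claimed.

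Finally, for $r$ I would use that $\IGr(k,2n) = \Sp_{2n}/\mathsf{P}_k$ admits a Bruhat decomposition into affine Schubert cells indexed by the cosets $W/W_P$, where $W$ is the Weyl group of $\Sp_{2n}$ and $W_P$ is the Weyl group of the Levi of $\mathsf{P}_k$. This cellular decomposition simultaneously yields that $K_0(\IGr(k,2n))$ is a free abelian group and that its rank equals the number of cells. Computing $|W(\Sp_{2n})| = 2^n\cdot n!$ and $|W(L_k)| = |S_k| \cdot |W(\Sp_{2(n-k)})| = k!\cdot 2^{n-k}(n-k)!$ yields
\[
r = \frac{2^n\cdot n!}{k!\cdot 2^{n-k}(n-k)!} = 2^k \binom{n}{k}.
\]
The only nontrivial point is the regularity of $\psi$ as a section of $\wedge^2\cU^*$, which I would dispatch by the dimension comparison above; everything else reduces to standard facts about $\Gr(k,2n)$ and Weyl group orders.
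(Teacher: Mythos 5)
Your argument is correct. Note, however, that the paper does not actually prove this lemma: it declares it ``well known'' and cites \cite[Lemma~2.19, Proposition~9.7]{kuznetsov2016exceptional}, so there is no in-text proof to compare against. Your three-part derivation (dimension by subtracting $\ranksf \wedge^2\cU^* = \binom{k}{2}$ from $\dimsf\Gr(k,2n) = k(2n-k)$; index by adjunction using $\det(\wedge^2\cU^*) = (\det\cU^*)^{\otimes(k-1)}$ together with $\omega_{\Gr(k,2n)} = \cO(-2n)$ and the fact that $\det\cU^*$ restricts to the ample generator of $\Pic\IGr(k,2n)$; rank of $K_0$ from the Bruhat cell decomposition of $\Sp_{2n}/\mathsf{P}_k$ with $|W(\Sp_{2n})| = 2^n n!$ and $|W_{\mathsf{P}_k}| = k!\,2^{n-k}(n-k)!$) is the standard one and is complete. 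The only point you gloss over --- that $\psi$ is a regular section --- is indeed dispatched by the dimension comparison you cite, since $\IGr(k,2n)$ is smooth of the expected codimension, so this is not a gap.
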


In the rest of the section, we state Borel--Bott--Weil Theorem on the even isotropic Grassmannian and we apply it to obtain a cohomology vanishing result.

\subsubsection{Borel--Bott--Weil Theorem}\label{sec:BWB}
The weight lattice of $\Sp_{2n}$ is isomorphic to $\mathbb{Z}^n$, and its subset
\begin{equation*}
    T_n^+= \{\lambda\in \mathbb{Z}^n\mid \lambda_1\geq \lambda_2\geq \dots \geq \lambda_n\geq 0\}
\end{equation*}
is the cone of dominant weights. Given $\lambda\in T_n^+$, we denote by $W^\lambda_{\Sp}$ the $\Sp(W)$-representation of highest weight $\lambda$. Our convention is such that:
\begin{equation*}
W_{\Sp}^{m,0,\dots,0} = S^m W^* \cong S^m W.
\end{equation*} The Weyl group of $\Sp_{2n}$ is the semidirect product $\bS_n\ltimes(\mathbb{Z}/2\mathbb{Z})^n$ acting on $\mathbb{Z}^n$, where $\bS_n$ acts by permutation and $(\mathbb{Z}/2\mathbb{Z})^n$ acts by changing signs of the coordinates. Let $\ell:\bS_n\ltimes(\mathbb{Z}/2\mathbb{Z})^n\rightarrow \mathbb{Z}$ be the length function. Thus, $\ell(\sigma)$ is the smallest number of simple reflections required to decompose $\sigma$. In this case, simple reflections are of two types: transposition of adjacent terms and the change of sign $(\lambda_1,\dots,\lambda_{n-1},\lambda_n)\mapsto (\lambda_1,\dots,\lambda_{n-1},-\lambda_n)$.  
We recall that the sum of fundamental weights of $\Sp_{2n}$ is
\begin{equation*}
    \rho_{\Sp_{2n}}=(n,n-1,\dots,1).
\end{equation*}

We recall here an immediate application of the more general Borel--Bott--Weil theorem, which will cover most of the computations in this work.
\begin{theorem}[Borel--Bott--Weil Theorem, \cite{demazure1976very}]\label{thm:BWB}
Let $\lambda\in P^+_k$. Let
\begin{equation*}
    \gamma=(\gamma_1,\dots,\gamma_n):=(\lambda_1,\dots, \lambda_k,0,\dots, 0)\in \mathbb{Z}^n,
\end{equation*} 
be the extension of $\lambda$ by $n-k$ zeros. Suppose that $\gamma+\rho_{\Sp_{2n}}$ has a zero entry or two entries with same absolute value, then:
\begin{equation*}
    \H^\bullet (\IGr(k,W), \cU^\lambda)=0.
\end{equation*}
Otherwise, let $\sigma\in \bS_n\ltimes(\mathbb{Z}/2\mathbb{Z})^n$ be the unique element of the Weyl group such that
\begin{equation*}
\gamma'=\sigma(\gamma+\rho_{\Sp_{2n}})-\rho_{\Sp_{2n}}\in T^+_n
\end{equation*}
is $\Sp_{2n}$-dominant, then we have:
\begin{equation*}
    \H^\bullet (\IGr(k,W), \cU^\lambda)=  W_{\Sp}^{\gamma'}[-\ell(\sigma)].
\end{equation*}
\end{theorem}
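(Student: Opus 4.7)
The plan is to deduce this statement from the general Borel--Bott--Weil theorem applied to the homogeneous space $\IGr(k,W) = \Sp(W)/\mathsf{P}_k$, where $\mathsf{P}_k$ is the maximal parabolic stabilizing a fixed isotropic $k$-dimensional subspace. The first step is to exhibit $\cU^\lambda$ as the equivariant bundle associated to a specific character of $\mathsf{P}_k$; the second is to translate the regularity/dominance condition, stated abstractly in terms of the $\Sp_{2n}$-root system, into the explicit combinatorial condition appearing in the theorem.

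For the identification, I would observe that the Levi subgroup of $\mathsf{P}_k$ is $\GL_k \times \Sp_{2(n-k)}$: the $\GL_k$ factor acts on the tautological isotropic subspace $U$, while the $\Sp_{2(n-k)}$ factor acts on $U^\perp/U$. The dual of the tautological bundle $\cU^*$ is the $\Sp(W)$-equivariant bundle associated to the standard representation of the $\GL_k$ factor, extended trivially on the $\Sp_{2(n-k)}$ factor and the unipotent radical. Applying the Schur functor indexed by $\lambda$, the fiber of $\cU^\lambda$ becomes the irreducible $\GL_k$-representation $V_{\GL}^\lambda$ of highest weight $\lambda$, extended trivially to $\mathsf{P}_k$. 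Under the standard embedding of character lattices, the highest weight of this $\mathsf{P}_k$-representation, viewed inside the weight lattice $\mathbb{Z}^n$ of $\Sp_{2n}$, is precisely $\gamma = (\lambda_1,\dots,\lambda_k,0,\dots,0)$.

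At this point, the general Borel--Bott--Weil theorem (\cite{demazure1976very}) asserts that $\H^\bullet(\IGr(k,W),\cU^\lambda)$ vanishes if $\gamma+\rho_{\Sp_{2n}}$ is singular (i.e.\ lies on some root hyperplane), and otherwise is concentrated in a single degree equal to the length of the unique Weyl element $\sigma$ making $\sigma(\gamma+\rho_{\Sp_{2n}})-\rho_{\Sp_{2n}}$ dominant, where it agrees with $W_{\Sp}^{\sigma(\gamma+\rho_{\Sp_{2n}})-\rho_{\Sp_{2n}}}$. It therefore remains only to rephrase the singularity condition in coordinates: the root system of $\Sp_{2n}$ is $\{\pm e_i\pm e_j\mid i\neq j\}\cup\{\pm 2e_i\}$, so the root hyperplanes in $\mathbb{R}^n$ are $\{x_i=x_j\}$, $\{x_i=-x_j\}$ for $i\neq j$, and $\{x_i=0\}$. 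Thus $\gamma+\rho_{\Sp_{2n}}$ is singular if and only if it has a zero entry or two entries of equal absolute value, matching the hypothesis of the theorem exactly.

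The main step is conceptual rather than computational: verifying that the $\GL_k$-equivariant construction of $\cU^\lambda$ via Schur functors coincides with the $\Sp(W)$-equivariant bundle attached to the weight $\gamma$ on $\mathsf{P}_k$. Once this identification is in place, the result is an immediate specialization of the standard Borel--Bott--Weil statement together with the coordinate description of the $\Sp_{2n}$-root hyperplanes; no further calculation is required.
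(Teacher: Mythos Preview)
Your proposal is correct and constitutes the standard derivation of this specialization. The paper itself does not give a proof of this theorem: it is stated with a citation to \cite{demazure1976very} and introduced as ``an immediate application of the more general Borel--Bott--Weil theorem,'' so your identification of $\cU^\lambda$ with the $\mathsf{P}_k$-equivariant bundle of highest weight $\gamma$ together with the coordinate description of the $\Sp_{2n}$ root hyperplanes is exactly the intended (and only) argument.
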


\subsubsection{Vanishing}

The coming result and the following \cref{prop:vanish-crit} are a straightforward generalization of the ideas presented in \cite[\textsection~4.1]{fonarev2020bounded}, where they were proved only in the case $k=n$. For further convenience, we state the following result for $\IGr(k,2n+2)$ instead of $\IGr(k,2n)$. 

\begin{proposition}\label{prop:vanish-even}
Let $\lambda=(\lambda_1,\dots,\lambda_k)\in P_k^+$ be a dominant weight of the group $\GL_k$ such that
\begin{enumerate}
    \item\label{it:ve-1} $\lambda_k < 0$,
    \item\label{it:ve-2} $\lambda_1 \geq -2(n+1) + k$,
    \item\label{it:ve-3} $\lambda_i-\lambda_{i+1} \leq 2(n+2-k)-1$ for $i=1,\dots,k-1$.
\end{enumerate}
Then we have:
\begin{equation*}
    \H^\bullet(\IGr(k, 2n+2), \cU^\lambda)=0,
\end{equation*}
that is, the bundle $ \cU^\lambda$ on $\IGr(k, 2n+2)$ is acyclic.
\end{proposition}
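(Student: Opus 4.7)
The plan is to apply Borel--Bott--Weil (\cref{thm:BWB}, with $n$ replaced by $n+1$) and show that, under hypotheses (1)--(3), the shifted weight $\gamma + \rho_{\Sp_{2n+2}}$ is singular, i.e., has a zero entry or two entries with coinciding absolute values. Concretely, writing $\gamma = (\lambda_1,\dots,\lambda_k,0,\dots,0)\in\mathbb{Z}^{n+1}$ and $\rho_{\Sp_{2n+2}}=(n+1,n,\dots,1)$, the first $k$ entries of $\gamma + \rho$ are $a_i := \lambda_i + n + 2 - i$ for $i=1,\dots,k$, and the last $n+1-k$ entries are exactly $n+1-k, n-k, \dots, 1$. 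The task reduces to showing that if we assume for contradiction the weight is regular, we reach a contradiction with (1)--(3).

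First I would reformulate the three hypotheses in terms of the $a_i$'s. Setting $m := n+2-k$, (1) becomes $a_k \leq m - 1$; (2) becomes $a_1 \geq -(m-1)$; and (3) becomes $a_i - a_{i+1} \leq 2m$ for $i=1,\dots,k-1$ (using $a_i - a_{i+1} = \lambda_i - \lambda_{i+1} + 1$). Since the trailing entries of $\gamma + \rho$ exhaust $\{1,\dots, n+1-k\} = \{1,\dots, m-1\}$, regularity requires every $a_i$ to be nonzero with $|a_i| \geq m$ and the $|a_i|$'s mutually distinct.

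The key step is then a sign-change argument. By (1), $a_k \leq m-1$ combined with $|a_k|\geq m$ forces $a_k \leq -m$; symmetrically (2) forces $a_1 \geq m$. Since the sequence $a_1 > a_2 > \dots > a_k$ is strictly decreasing, there is a unique index $j$ with $a_j \geq m$ and $a_{j+1} \leq -m$, so $a_j - a_{j+1} \geq 2m$. Hypothesis (3) gives $a_j - a_{j+1} \leq 2m$, so equality must hold: $a_j = m$ and $a_{j+1} = -m$, contradicting the distinctness of $|a_j|$ and $|a_{j+1}|$. Hence $\gamma + \rho$ cannot be regular, and Borel--Bott--Weil gives $\H^\bullet(\IGr(k, 2n+2), \cU^\lambda) = 0$.

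The proof is essentially a bookkeeping argument once the right reformulation is in place; the only subtle point is choosing the shift $m = n+2-k$ and recognizing that the three hypotheses are precisely calibrated so that (1)--(2) force a sign change and (3) bounds the gap across that sign change by exactly $2m$. I don't expect any serious obstacle; the computation should be purely combinatorial, and no geometric input beyond \cref{thm:BWB} is required.
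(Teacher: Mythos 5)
Your argument is correct and is essentially identical to the paper's proof: both apply Borel--Bott--Weil on $\IGr(k,2n+2)$, observe that the trailing entries of $\gamma+\rho_{\Sp_{2n+2}}$ exhaust $\{1,\dots,n+1-k\}$ so that regularity forces $\abs{a_i}\geq n+2-k$ for $i\leq k$, then use (1) and (2) to force a sign change and (3) to pin the gap at exactly $2(n+2-k)$, giving $\abs{a_j}=\abs{a_{j+1}}$. Your $m=n+2-k$ and $a_i$ notation is a cosmetic streamlining of what the paper writes out explicitly with $\gamma_i$.
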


\begin{proof}
Suppose that $\cU^\lambda$ is not acyclic. Then, according to \cref{thm:BWB}, the absolute values of the entries in the sequence
\begin{equation*}
\gamma = (n+1+\lambda_1, n+\lambda_2,\dots, n+2-k+\lambda_k,n+1-k,\dots,1)    
\end{equation*}
have distinct nonzero absolute values. In particular, the entries $\gamma_i$, $1\leq i \leq k$, satisfy
\begin{equation}\label{eq:van-even}
    \gamma_i\geq n+2-k \quad \textrm{or} \quad \gamma_i\leq -(n-k+2).
\end{equation}
Using conditions \ref{it:ve-1} and \ref{it:ve-2} we have: 
\begin{align*}
    \gamma_1 = n+1+\lambda_1 \geq - (n+1-k) \quad \textrm{and} \quad
    \gamma_k = n+2-k+\lambda_k\leq n+1-k,
\end{align*}
which, together with \eqref{eq:van-even} imply
\begin{equation*}
    \gamma_1 \geq n+2-k \quad \textrm{and} \quad \gamma_k \leq -(n+2-k).
\end{equation*}
As $\lambda$ is dominant, the first $k$ entries of $\gamma$ are strictly decreasing, hence there must be some $j\in \{1,\dots, k-1\}$ such that 
\begin{equation*}
    \gamma_{j} \geq n+2-k \quad \textrm{and} \quad \gamma_{j+1}
    \leq -(n+2-k).
\end{equation*}
On the other hand, from condition \ref{it:ve-3} we obtain:
\begin{equation*}
    \gamma_{j}-\gamma_{j+1} = n+2-j+\lambda_{j} - (n+2-(j+1)+\lambda_{j+1}) \leq 1+\lambda_{j}-\lambda_{j+1}\leq 2(n+2-k).
\end{equation*}
Finally, we obtain:
\begin{equation*}
    \gamma_{j} = n+2-k \quad \textrm{and} \quad \gamma_{j+1}
    = -(n+2-k),
\end{equation*}
so we have $\abs{\gamma_{j}}=\abs{\gamma_{j+1}}$, contradicting the assumption that the absolute values of the entries of $\gamma$ are distinct.
\end{proof}

\subsection{Odd isotropic Grassmannian}\label{subsec:odd-grass}

Let $V$ be a ${(2n+1)}$-dimensional vector space endowed with $\psi \in \wedge^2 V^*$, a skew-symmetric form of maximal possible rank $2n$. Fix a $(2n+2)$-dimensional symplectic vector space $(\widetilde{V}, \widetilde{\psi})$ such that $V\subset \widetilde{V}$ is a hyperplane and $\widetilde{\psi}\vert_V=\psi$. 

The odd isotropic Grassmannian $X=\IGr(k, V)$ is the variety parametrising isotropic $k$-subspaces of $V$, that is, 
\begin{equation*}
    \IGr(k,V)=\Gr(k,V)\cap \IGr(k,\widetilde{V})
\end{equation*} 
where the intersection is considered in $\Gr(k,\widetilde{V})$.

We denote the stabilizer of $\psi$ in $\GL(V)$ by $\Sp(V)=\Sp_{2n+1}$ (\cf \cite[\textsection 3.3]{mihai2007odd}, \cite{proctor1988odd}) in analogy with the classical symplectic group. We refer to $\Sp_{2n+1}$ as the \textit{odd symplectic group}, it is connected and nonreductive.
The natural action of $\Sp_{2n+1}$  on $\IGr(k,V)$ is quasi-homogeneous.

\subsubsection{Realization}\label{subsec:odd-notn}

We can define $X$ as a zero locus of a global section of a vector bundle on $\IGr(k,\widetilde{V})$. We denote by $\widetilde{\cU}$ the tautological bundle of  $\IGr(k,\widetilde{V})$. Consider any nonzero section:
\begin{equation*}
    \widetilde{v} \in \widetilde{V}^*= \H^0(\IGr(k,\widetilde{V}), \, \widetilde{\cU}^*),
\end{equation*}
i.e. a nonzero linear form on $\widetilde{V}$, with $V = \Ker \widetilde{v}$. Then $\IGr(k,V)$ is the zero locus of $\widetilde{v}$. As $\IGr(k,V)$ is a smooth variety of expected codimension by \cite[Proposition~4.1]{mihai2007odd}, $\widetilde{v}$ is a regular section. We denote the embedding as $j:\IGr(k,V)\rightarrow \IGr(k,\widetilde{V})$. 

We have the following Koszul resolution in $\IGr(k,\widetilde{V})$:
\begin{equation}\label{eq:koszul}
0\rightarrow \wedge^k \widetilde{\cU} \rightarrow \wedge^{k-1} \widetilde{\cU} \rightarrow \dots \rightarrow \widetilde{\cU} \rightarrow \cO_{\IGr(k,\widetilde{V})}\rightarrow j_* \cO_{\IGr(k,V)} \rightarrow 0. 
\end{equation}
The following proposition provides an odd-dimensional analogue to \cref{lem:even-inv}.
\begin{proposition}\label{prop:invariants-easy}
The dimension $d$ and the index $w$ of $\IGr(k,2n+1)$ are respectively:
\begin{equation*}
    d=\frac{k(4n-3k+3)}{2}, \quad w = 2n + 2 - k.
\end{equation*}
Moreover, the Grothendieck group is a free abelian group and its rank $r$ is:
\begin{equation*}
    r =\binom{n}{k-1}\cdot\frac{2^{k-1}(2n+2-k)}{k}.
\end{equation*}
\end{proposition}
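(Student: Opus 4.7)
The plan is to exploit the description of $X$ as the zero locus of the regular section $\widetilde v \in \H^0(\widetilde\cU^*)$ inside $\IGr(k,\widetilde V)$ (see \cref{subsec:odd-notn}) to obtain the dimension and the index by adjunction, and to use a Białynicki--Birula cell decomposition to compute the rank of $K_0(X)$.

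For the dimension and index, I would argue via the regularity of $\widetilde v$ (which holds since $X$ has the expected codimension by \cite[Proposition~4.1]{mihai2007odd}): since $\widetilde\cU^*$ has rank $k$,
\begin{equation*}
    \dimsf X = \dimsf \IGr(k, 2(n+1)) - k = \frac{k(4(n+1)-3k+1)}{2} - k = \frac{k(4n-3k+3)}{2}
\end{equation*}
by \cref{lem:even-inv}. The conormal bundle of $X$ is $\widetilde\cU|_X$, and adjunction combined with $\det \widetilde\cU^* = \cO(1)$ and $\omega_{\IGr(k,\widetilde V)} = \cO(-(2n+3-k))$ yields
\begin{equation*}
    \omega_X \cong \omega_{\IGr(k,\widetilde V)}|_X \otimes \det \widetilde\cU^*|_X \cong \cO_X(-(2n+2-k)).
\end{equation*}
To conclude that the index equals $2n+2-k$ rather than a proper multiple, I would appeal to the fact that $X$ is horospherical of Picard rank one of type $(\mathsf{C}_n,\mathsf{P}_k,\mathsf{P}_{k-1})$ in \cref{thm:pas}, so $\Pic X$ is cyclic with primitive generator $\cO(1)$.

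For the rank of $K_0(X)$, I would exhibit an algebraic cell decomposition. Choose a maximal torus $T \subset \Sp_{2n+1}$ of rank $n$ and a basis $e_1,\dots,e_n,f_1,\dots,f_n,e_0$ of $V$ consisting of $T$-weight vectors, with $\mathbb{C} e_0 = \ker\psi$ of weight zero and all remaining weights nonzero and pairwise distinct. A generic cocharacter of $T$ defines a $\mathbb{C}^*$-action on $X$ whose fixed locus coincides with $X^T$ and is finite; by Białynicki--Birula, $X$ decomposes into attracting affine cells indexed by $X^T$, and the classes of their closures form a $\mathbb{Z}$-basis of $K_0(X)$. A $T$-fixed $k$-subspace is spanned by $k$ of the above weight vectors, and the isotropy condition forbids including both $e_i$ and $f_i$ for any $i$ but imposes no further constraint; splitting according to whether $e_0$ lies in the subspace gives
\begin{equation*}
    r = |X^T| = \binom{n}{k-1}\, 2^{k-1} + \binom{n}{k}\, 2^k = 2^{k-1}\binom{n}{k-1}\cdot \frac{k + 2(n-k+1)}{k} = \binom{n}{k-1}\cdot \frac{2^{k-1}(2n+2-k)}{k},
\end{equation*}
matching the stated formula. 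The main obstacle is the primitivity of $\cO(1)$ in $\Pic X$, which I would settle by appealing to the horospherical classification rather than via a direct Lefschetz-type argument; the rest reduces to standard adjunction together with the fixed-point count above.
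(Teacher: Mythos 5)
Your computation of the dimension and index via adjunction along the embedding $j:\IGr(k,V)\hookrightarrow\IGr(k,\widetilde V)$ is exactly what the paper invokes, and the appeal to the classification for the primitivity of $\cO(1)$ is sound. For the rank of $K_0$, however, you take a genuinely different route. The paper derives $r$ from \cref{lem:even-inv} together with \cref{prop:invariants}, the general decomposition $K_0(X)\cong K_0(Y)\oplus K_0(Z)$ for horospherical varieties of Picard rank one, which it then proves in \cref{sec:appendix} by patching Schubert cells of $Y$, $Z$ and the exceptional divisor $E$ into a finite affine stratification of $X$. You instead run a Białynicki--Birula fixed-point count on $X$ directly, using a maximal torus of $\Sp_{2n+1}$ whose weights on $V$ are distinct. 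Your count $|X^T| = \binom{n}{k-1}2^{k-1} + \binom{n}{k}2^k$ agrees with $\ranksf K_0(Z)+\ranksf K_0(Y)$, so the answer matches. Your argument is more elementary and self-contained for this one Grassmannian, while the paper's is more structural: it yields the additivity $K_0(X)\cong K_0(Y)\oplus K_0(Z)$ for all Picard-rank-one horospherical varieties, which is of independent interest and is the statement the paper actually wants (\cf the sentence before \cref{prop:invariants} in \cref{sec:intro}). One small caveat on your version: a Białynicki--Birula decomposition with isolated fixed points gives a filtration by closed subschemes with affine-space graded pieces, which is enough (via Quillen localization) to conclude that $K_0(X)$ is free of rank $|X^T|$; but the cell closures need not be unions of cells, so one should phrase the basis statement via the filtration rather than claiming a stratification in the strict sense of \cref{def:strat}. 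With that phrasing adjusted, your proof is correct.
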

\begin{proof}
The first two formulas are well-known and can be obtained from the the embedding of $\IGr(k,V)$ in $\IGr(k, \widetilde{V})$, \cf \cite[Proposition~4.1]{mihai2007odd}. 
The formula for the rank of the  Grothendieck group is an immediate consequence of \cref{lem:even-inv} and \cref{prop:invariants}. We give a detailed proof of \cref{prop:invariants} in \cref{sec:appendix}.
\end{proof}

\begin{remark}\label{rmk:size}
A full exceptional collection induces a basis of the Grothendieck group. If a variety $X$ of index $w$ admits a full rectangular Lefschetz collection with a basis of $p$ elements, this induces a basis of $K_0(X)$ with $r = w p$ elements. In particular, the index divides the rank of the Grothendieck group.
By \cref{prop:invariants-easy}, in the case of the odd isotropic Grassmannians $\IGr(3,2n+1)$ it is possible to have a full rectangular Lefschetz collection only if
\begin{equation*}
    2n-1 \quad \mathrm{divides} \quad \binom{n}{2}\cdot\frac{4(2n-1)}{3} \quad \Leftrightarrow \quad 3 \quad \mathrm{divides} \quad 4\cdot\binom{n}{2}  = 2n(n-1),
\end{equation*}
this is equivalent to $n \equiv 0 \; \textrm{or} \; 1 \;\textrm{mod} \; 3$. In this case, $p = \frac{2}{3}n(n-1)$.
\end{remark}

For $\IGr(3,9)$ we have $d=15$, $w=7$, $r=56$; therefore  $p=8$. 

\subsubsection{Vanishing}
We give here an acyclicity criterion similar to \cref{prop:vanish-even},  which will cover most bundles on the odd Grassmannian $X=\IGr(k,2n+1)$ studied in this work.

\begin{corollary}\label{prop:vanish-crit}
Let $\lambda=(\lambda_1,\dots,\lambda_k)\in P_k^+$ be a dominant weight of the group $\GL_k$ such that
\begin{enumerate}
    \item $\lambda_k < 0$,
    \item $\lambda_1 \geq - 2n + k-1$,
    \item $\lambda_i-\lambda_{i+1} \leq 2(n+1-k)$ for $i=1,\dots,k-1$.
\end{enumerate}
Then the bundle $\cU^\lambda$ on $X=\IGr(k, 2n+1)$ is acyclic.
\end{corollary}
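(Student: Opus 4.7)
The plan is to transfer the vanishing from the odd Grassmannian $X = \IGr(k,2n+1)$ to the even one $\IGr(k,\wV) = \IGr(k,2n+2)$ via the embedding $j \colon X \hookrightarrow \IGr(k,\wV)$ described in \cref{subsec:odd-notn}, and then invoke \cref{prop:vanish-even}. First, I would use the projection formula and the Koszul resolution \eqref{eq:koszul} of $j_*\cO_X$ on $\IGr(k,\wV)$. Tensoring \eqref{eq:koszul} by $\wcU^\lambda$ (which is locally free, so exactness is preserved) gives a resolution of $j_*\cU^\lambda$ by the bundles $\wcU^\lambda \otimes \wedge^i \wcU$ for $i = 0,\dots,k$. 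From the associated hypercohomology spectral sequence, it is enough to prove that
\begin{equation*}
    \H^\bullet\!\bigl(\IGr(k,\wV),\, \wcU^\lambda \otimes \wedge^i \wcU\bigr) = 0 \quad \text{for all } 0 \leq i \leq k.
\end{equation*}

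Next, I would decompose these tensor products into irreducible Schur summands. Using $\wedge^i \wcU \cong \wedge^{k-i}\wcU^* \otimes \det \wcU$, Pieri's formula (\cref{prop:pieri}) and \cref{cor:LR-det}, one gets
\begin{equation*}
    \wcU^\lambda \otimes \wedge^i \wcU \;=\; \bigoplus_{\gamma \in \VS_\lambda^{\,k-i}} \wcU^{\gamma - (1,\dots,1)} \;=\; \bigoplus_\nu \wcU^{\nu},
\end{equation*}
where each summand weight $\nu$ satisfies $\nu_j \in \{\lambda_j - 1, \lambda_j\}$ for all $j$. These $\nu$ are automatically in $P_k^+$.

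Finally, I would verify that each such $\nu$ satisfies the three hypotheses of \cref{prop:vanish-even} applied with $n$ replaced by $n+1$ (since we are on $\IGr(k,2(n+1))$). Namely: $\nu_k \leq \lambda_k < 0$; $\nu_1 \geq \lambda_1 - 1 \geq -2n + k - 2 = -2(n+1)+k$; and
\begin{equation*}
    \nu_i - \nu_{i+1} \leq \lambda_i - \lambda_{i+1} + 1 \leq 2(n+1-k) + 1 = 2\bigl((n+1)+2-k\bigr) - 1,
\end{equation*}
where each inequality uses exactly one of the three hypotheses on $\lambda$. Consequently every $\wcU^\nu$ is acyclic on $\IGr(k,2n+2)$, so the tensor products above vanish in cohomology, and the spectral sequence degenerates to $\H^\bullet(X,\cU^\lambda)=0$. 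There is no real obstacle here beyond the arithmetic check, which is essentially tight: the bounds on $\IGr(k,2n+2)$ differ from those on $\IGr(k,2n+1)$ by precisely the slack needed to absorb the shift $\lambda \rightsquigarrow \lambda - (\text{vertical strip})$ introduced by the Koszul filtration.
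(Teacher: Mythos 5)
Your proof follows the paper's argument essentially verbatim: push $\cU^\lambda$ forward along $j$, replace $j_*\cO_X$ by the Koszul resolution \eqref{eq:koszul}, decompose each term $\wcU^\lambda\otimes\wedge^i\wcU$ by Pieri into Schur summands $\wcU^\nu$ with $\nu_j\in\{\lambda_j-1,\lambda_j\}$, and check the hypotheses of \cref{prop:vanish-even}. One small slip in the framing: \cref{prop:vanish-even} is already stated on $\IGr(k,2n+2)$ with the parameter $n$, so you should apply it directly, not ``with $n$ replaced by $n+1$''; your final displayed identity $2(n+1-k)+1=2\bigl((n+1)+2-k\bigr)-1$ is actually false (the left side is $2n+3-2k$, the right side $2n+5-2k$), but since $2(n+1-k)+1=2(n+2-k)-1$ is exactly the third bound as stated in \cref{prop:vanish-even}, the inequality you actually proved is the correct one and the argument goes through unchanged.
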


\begin{proof}
Fix an embedding $j:X\rightarrow \IGr(k,2n+2)=\widetilde{X}$. Recall the notation fixed in \cref{subsec:odd-notn}. By the projection formula, we have an isomorphism:
\begin{equation*}
\H^\bullet(X,\cU^\lambda)= \H^\bullet(\widetilde{X}, j_* \cU^\lambda)= \H^\bullet(\widetilde{X},  \widetilde{\cU}^\lambda\otimes j_*\cO_X).
\end{equation*}
Replacing $j_*\cO_X$ by its Koszul resolution \eqref{eq:koszul}, we obtain the following spectral sequence:
\begin{equation}\label{eq:koszul-vanish}
    E^{-p,q}_1=\H^q(\widetilde{X}, \widetilde{\cU}^\lambda \otimes \wedge^p  \widetilde{\cU})\Rightarrow \H^{q-p}(X, \cU^\lambda)
\end{equation}
for $q\geq 0$ and $p=0,\dots,k$. Hence, it is enough to compute the cohomology of the direct summands \begin{equation*} 
\widetilde{\cU}^\gamma \inplus \widetilde{\cU}^\lambda \otimes \wedge^p  \widetilde{\cU} = \widetilde{\cU}^\lambda \otimes \wedge^k  \widetilde{\cU}\otimes (\wedge^{k-p}  \widetilde{\cU})^* = \widetilde{\cU}^{\lambda-(1,\dots,1)} \otimes (\wedge^{k-p}  \widetilde{\cU})^*. 
\end{equation*} By \cref{prop:pieri} and \cref{cor:LR-det}, it is enough to verify that the bundles $\widetilde{\cU}^\gamma$ are acyclic for any  
\begin{equation*} 
\gamma \in \bigcup\limits_{p\in\{0,\dots,k\}}\VS_{\lambda-(1,\dots,1)}^p. 
\end{equation*} 
Let us verify the conditions of
\cref{prop:vanish-even} for $\widetilde{\cU}^\gamma$. 
First of all, we have 
\begin{equation}\label{eq:VS}
    \lambda_i - 1\leq \gamma_i \leq  \lambda_i \quad \text{for} \quad i=1,\dots,k.
\end{equation}
As $\gamma_k\leq \lambda_k<0,$
by \eqref{eq:VS} and the hypothesis, the first condition is satisfied. Using the second hypothesis, we obtain:
\begin{equation*}
     (- 2n + k -1) -1 \leq \lambda_1 - 1\leq \gamma_1,
\end{equation*}
so the second condition of \cref{prop:vanish-even} is satisfied. Finally,
\begin{equation*}
     \gamma_{i}-\gamma_{i+1}\leq \lambda_i-\lambda_{i+1}+1\leq 2(n+1-k)+1,
\end{equation*}
so that the last condition is verified and $\widetilde{\cU}^\gamma$ is acyclic by \cref{prop:vanish-even}. Therefore, the spectral sequence \eqref{eq:koszul-vanish} vanishes at the first page, proving that $\cU^\lambda$ is acyclic.
\end{proof}

\section{Exceptionality}\label{sec:exc}

Let us fix $X=\IGr(3,9)$, the odd isotropic Grassmannian of $3$-dimensional subspaces in $V$, where $V$ is a $9$-dimensional vector space endowed with a skew-symmetric form of maximal rank $8$, $\psi$.
By \cref{prop:invariants-easy}, $X$ has index $w=7$. Recall that in our notation we have:
\begin{equation*}
 \cU^{m,0,0}=S^m \cU^*,\quad \cU^{0,0,-m}=S^m \cU, \quad  \cU^{l,l,l}=\cO(l).
\end{equation*}

We will often need to consider the embedding $j:\IGr(3,9) \rightarrow \IGr(3,10)$. We will denote the tautological bundle of $\IGr(3,10)$ as $\wcU$.
We restate here \cref{prop:vanish-even} and \cref{prop:vanish-crit} for $\IGr(3,10)$ and $\IGr(3,9)$, they will be used as key tools to show semiorthogonality in \cref{sec:twocoll}.
\begin{corollary}\label{vanish-IG310}
Let $(\lambda_1,\lambda_2,\lambda_3)\in P_3^+$ be a dominant weight of the group $\GL_3$ such that
\begin{enumerate}
    \item $\lambda_3 < 0$,
    \item $\lambda_1 \geq - 7$,
    \item $\lambda_1-\lambda_2 \leq 5$ and $\lambda_2-\lambda_3 \leq 5$.
\end{enumerate}
Then, $\H^\bullet(\IGr(3, 10),\widetilde{\cU}^{\lambda})=0$. If $\lambda_1\geq -1$ instead, $\H^\bullet(\IGr(3, 10),\widetilde{\cU}^{\lambda}(-l))=0$ for $l=0,\dots,6$.
\end{corollary}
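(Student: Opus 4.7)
The plan is to obtain both statements by direct specialization of \cref{prop:vanish-even}. For the first assertion, one simply sets $k=3$ and $n=4$ in that proposition, so that $2n+2 = 10$ corresponds to $\IGr(3,10)$, the lower bound $-2(n+1)+k$ becomes $-7$, and the gap bound $2(n+2-k)-1$ becomes $5$. With these substitutions, the three numerical conditions required by \cref{prop:vanish-even} coincide exactly with those listed in the first part of the corollary, so the vanishing $\H^\bullet(\IGr(3,10),\wcU^\lambda)=0$ is immediate.

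For the second assertion, the idea is to reduce the twisted statement to the untwisted one. Since $\wcO(-l)=\wcU^{(-l,-l,-l)}$, an application of \cref{cor:LR-det} yields
\begin{equation*}
\wcU^\lambda(-l)=\wcU^{\lambda-(l,l,l)}.
\end{equation*}
I would then set $\mu:=\lambda-(l,l,l)$ and verify the three hypotheses of the first part of the corollary for $\mu$ and every $l\in\{0,\dots,6\}$. The consecutive differences $\mu_i-\mu_{i+1}=\lambda_i-\lambda_{i+1}$ are unaffected by the shift, so the gap condition carries over unchanged; the inequality $\mu_3=\lambda_3-l<0$ follows since $\lambda_3<0$ and $l\geq 0$; and the strengthened hypothesis $\lambda_1\geq -1$ combined with $l\leq 6$ gives $\mu_1=\lambda_1-l\geq -7$, which is exactly what the first part requires. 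Applying the first part to $\mu$ then produces the desired vanishing.

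There is no substantive obstacle: the corollary is a purely numerical specialization of \cref{prop:vanish-even} together with a routine reduction via the determinantal twist. The only point worth noting is that the range $l\in\{0,\dots,6\}$ is precisely what is compatible with the bound on $\mu_1$ after subtracting $l$, which explains why the hypothesis $\lambda_1\geq -1$ is sharp for this argument and why the twist range stops at $6$, consistent with the index $w=7$ that will govern the Lefschetz computations later in the paper.
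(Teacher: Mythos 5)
Your argument is correct and is the natural (indeed, essentially the only) proof: the first assertion is the literal specialization $k=3$, $n=4$ of \cref{prop:vanish-even}, and the second follows by rewriting $\wcU^\lambda(-l)=\wcU^{\lambda-(l,l,l)}$ via \cref{cor:LR-det} and observing that the shift preserves the gap conditions, keeps $\mu_3<0$ since $\lambda_3<0$ and $l\geq0$, and gives $\mu_1=\lambda_1-l\geq -1-6=-7$. The paper leaves this corollary without an explicit proof, and your reduction supplies it cleanly.
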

\begin{corollary}\label{vanish-IG39}
Let $(\lambda_1,\lambda_2,\lambda_3)\in P_3^+$ be a dominant weight of the group $\GL_3$ such that
\begin{enumerate}
    \item $\lambda_3 < 0$,
    \item $\lambda_1 \geq - 6$,
    \item $\lambda_1-\lambda_2 \leq 4$ and $\lambda_2-\lambda_3 \leq 4$.
\end{enumerate}
Then, $\H^\bullet(\IGr(3, 9),\cU^{\lambda})=0$. If $\lambda_1\geq 0$ instead, $\H^\bullet(\IGr(3, 9),\cU^{\lambda}(-l))=0$ for $l=0,\dots,6$.
\end{corollary}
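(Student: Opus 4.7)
The plan is to derive \cref{vanish-IG39} as a direct specialization of \cref{prop:vanish-crit} together with a short twist argument for the second assertion. Both parts should be quick, so the main content is a careful verification that the hypotheses transport correctly; there is no real obstacle here.

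For the first assertion, I would simply set $k=3$ and $n=4$ in \cref{prop:vanish-crit}, so that $\IGr(k,2n+1) = \IGr(3,9)$. The three bounds in that proposition become: $\lambda_3<0$; $\lambda_1 \geq -2\cdot 4 + 3 - 1 = -6$; and $\lambda_i - \lambda_{i+1} \leq 2(4+1-3) = 4$ for $i=1,2$. These match the three hypotheses of \cref{vanish-IG39} verbatim, so \cref{prop:vanish-crit} gives $\H^\bullet(\IGr(3,9),\cU^\lambda)=0$.

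For the twisted statement, I would reduce it to the first assertion by observing that $\cU^\lambda(-l) = \cU^{\lambda-(l,l,l)}$ since $\cO(-l) = \cU^{-l,-l,-l}$ and tensoring by a power of the determinant bundle just shifts all entries of the weight (by \cref{cor:LR-det}). Set $\mu := \lambda - (l,l,l)$ and check the three conditions of the (now established) first part of \cref{vanish-IG39} for $\mu$ under the strengthened hypothesis $\lambda_1 \geq 0$ and $0 \leq l \leq 6$:
\begin{itemize}
    \item $\mu_3 = \lambda_3 - l < 0$ since $\lambda_3 < 0$ and $l \geq 0$;
    \item $\mu_1 = \lambda_1 - l \geq 0 - 6 = -6$;
    \item $\mu_i - \mu_{i+1} = \lambda_i - \lambda_{i+1} \leq 4$, as the differences are twist-invariant.
\end{itemize}
Applying the first assertion to $\mu$ then yields $\H^\bullet(\IGr(3,9), \cU^\lambda(-l)) = 0$ for $l=0,\dots,6$, completing the proof. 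No Borel--Bott--Weil or Koszul computation is needed beyond what \cref{prop:vanish-crit} already provides.
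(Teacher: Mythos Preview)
Your proof is correct and matches the paper's approach: the paper presents \cref{vanish-IG39} explicitly as a restatement of \cref{prop:vanish-crit} for $k=3$, $n=4$, without a separate proof. Your verification of the numerical specialization and the twist argument via $\cU^\lambda(-l)=\cU^{\lambda-(l,l,l)}$ is exactly what is implicitly required.
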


\subsection{Two collections}\label{sec:twocoll}

Recall that in the \cref{sec:intro} we defined two collections in $\DX$ of length $7$:
\begin{equation*}
\begin{aligned}
    \bB_1 &= \{ \ \cU^{0,0,-2}, &\cU^{0,0,-1},\quad &\cU^{1,0,-1}, &\cU^{2,0,-1},\quad &\cU^{0,0,0},  &\cU^{1,0,0},\quad &\cU^{2,0,0} &\}, \\
    \bB_2 &= \{&\cU^{0,0,-1},\quad &\cU^{1,0,-1}, &\cU^{2,0,-1},\quad &\cU^{0,0,0},  &\cU^{1,0,0},\quad
    &\cU^{2,0,0}, &\cU^{3,0,0} \ \}.
\end{aligned}
\end{equation*}
We remark that all the weights $\lambda=(\lambda_1,\lambda_2,\lambda_3)$ such that $\cU^\lambda\in \bB_1\cup \bB_2$ satisfy $\lambda_2=0$. Notice that 
\begin{equation*}
\bB_1=\{\,\cU^{0,0,-2}\,\}\cup (\,\bB_1\cap \bB_2\,),\quad \quad  
\bB_2= (\,\bB_1\cap \bB_2\,)\cup \{\,\cU^{3,0,0}\,\}.
\end{equation*} Moreover, $\bB_1$ and $\bB_2$ are ordered lexicographically with entries read from right to left, i.e.:
\begin{equation}\label{eq:except-order}
    (\beta_1,\beta_2,\beta_3) < (\alpha_1,\alpha_2,\alpha_3)  \Leftrightarrow \begin{cases*}\; \beta_3 < \alpha_3 \quad \text{or} \\
    \; \beta_3 = \alpha_3 \quad  \text{and} \quad \beta_2 < \alpha_2 \quad \text{or} \\
    \; \beta_3 = \alpha_3 \quad \text{and} \quad \beta_2 = \alpha_2 \quad \text{and} \quad \beta_1 < \alpha_1
    \end{cases*}
\end{equation}
which refines the partial order $\subseteq$ on $P^+_3$ presented in \eqref{eq:order}. Recall that an additional object $\cH\in \DX$ was introduced as the totalization of the bicomplex \eqref{H-intro} and that the collection 
\begin{equation*}
\bB=\{\,\cH\,\} \cup \bB_1
\end{equation*}
was defined in \eqref{def:B-intro}.

The first part of the section shows that $\bB_1$ and $\bB_2$ are bases of two (non full) rectangular Lefschetz exceptional collections (\cf \cref{cor:bases}). In addition, we prove the following intermediate statement.\begin{proposition}\label{prop:Semi-O}
Let $\Bsc\subseteq \DX$ be the subcategory generated by $\bB$. Then the collection of subcategories
$\Bsc,\, \Bsc(1),\, \Bsc(2),\, \Bsc(3),\, \Bsc(4),\, \Bsc(5),\, \Bsc(6)$
is semiorthogonal.
\end{proposition}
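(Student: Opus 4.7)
The plan is to reduce the semiorthogonality of $\Bsc, \Bsc(1), \dots, \Bsc(6)$ to a finite list of Ext-vanishings between the explicit bundles of $\bB_1 \cup \bB_2$, and then to cohomology computations on $X$ controlled by the vanishing criteria \cref{vanish-IG39,vanish-IG310}.

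First I would show that $\Bsc$ coincides with the triangulated subcategory $\langle \bB_1 \cup \bB_2 \rangle$. Every entry of the bicomplex \eqref{H-intro} defining $\cH$ is (up to a trivial factor) a sheaf in $\bB_1 \cup \{\cU^{3,0,0}\} = \bB_1 \cup \bB_2$, so $\cH \in \langle \bB_1 \cup \bB_2 \rangle$; conversely, the identification $\cH = \mathbb{L}_{\langle \bB_1 \rangle} \cU^{3,0,0}$ to be proved in \cref{sec:H-props} places $\cU^{3,0,0}$ in $\langle \bB_1, \cH \rangle = \langle \bB \rangle$. Hence it suffices to prove
\begin{equation*}
\Ext^\bullet(F(l), E) = 0 \quad \text{for all } E, F \in \bB_1 \cup \bB_2 \text{ and } l = 1, \dots, 6.
\end{equation*}
The ``diagonal'' cases where $E$ and $F$ both lie in $\bB_1$ or both in $\bB_2$ are exactly the rectangular Lefschetz condition supplied by \cref{cor:bases}. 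For the cross case $F \in \bB_1,\ E \in \bB_2$, Serre duality with $X$ of dimension $15$ and index $w = 7$ gives
\begin{equation*}
\Ext^\bullet(F(l), E) \cong \Ext^\bullet(E(7-l), F)^*[-15],
\end{equation*}
reducing it to the opposite cross direction with shift $7 - l \in \{1, \dots, 6\}$. Thus the only genuinely new task is the cross-vanishing
\begin{equation*}
\Ext^\bullet(\cU^\beta(l), \cU^\alpha) = 0, \qquad \cU^\alpha \in \bB_1,\ \cU^\beta \in \bB_2,\ l = 1, \dots, 6,
\end{equation*}
and among these the pairs not already covered by the Lefschetz statement are precisely those involving one of the two tails $\cU^{0,0,-2} \in \bB_1 \setminus \bB_2$ and $\cU^{3,0,0} \in \bB_2 \setminus \bB_1$.

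For each such pair I would write $\Ext^\bullet(\cU^\beta(l), \cU^\alpha) = \H^\bullet(X, \cU^\alpha \otimes \cU^{-\beta}(-l))$, decompose $\cU^\alpha \otimes \cU^{-\beta}$ via the Littlewood--Richardson rule into irreducible summands $\cU^\gamma$, and apply \cref{vanish-IG39} to each $\cU^\gamma(-l)$. The bounds of \cref{lem:sum} control the jumps $\gamma_i - \gamma_{i+1}$ enough that the criterion triggers on most summands. The main obstacle I anticipate is a handful of border weights where $\gamma - (l,l,l)$ saturates or just misses the numerical hypotheses of \cref{vanish-IG39}. For those I would lift to $\IGr(3,10)$ along the embedding $j: X \hookrightarrow \IGr(3,10)$, replace $j_* \cO_X$ by its Koszul resolution \eqref{eq:koszul}, and apply \cref{vanish-IG310} together with Pieri's formulas to each term of the resulting spectral sequence, as in the proof of \cref{prop:vanish-crit}; for any remaining stubborn summand I would fall back to a direct Borel--Bott--Weil computation on $\IGr(3,10)$ via \cref{thm:BWB}. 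Once the cross-vanishing is established, Ext-vanishings between generators propagate to their triangulated spans, and the semiorthogonality of $\Bsc, \Bsc(1), \dots, \Bsc(6)$ follows at once.
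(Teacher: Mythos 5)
Your approach mirrors the paper's in broad strokes: identify $\Bsc$ with $\langle\, \bB_1 \cup \bB_2 \,\rangle$, invoke \cref{lem:criterion-lef} to reduce to one-sided Ext-vanishings, dispatch the pairs lying in a common $\bB_j$ via \cref{cor:bases}, and treat the remaining cross-terms by cohomology. However, there are two issues, one of which is a genuine gap.

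The gap is circular reasoning. You cite the identification $\cH = \mathbb{L}_{\langle \bB_1 \rangle} \cU^{3,0,0}$, to be proved in \cref{sec:H-props}, to conclude $\cU^{3,0,0} \in \Bsc$. That identity is \cref{prop:H is semiortho}, which rests on \cref{prop:mutate}, which in turn uses the semiorthogonality you are trying to prove here. No forward reference is needed: the very bicomplex \eqref{H-intro} that exhibits $\cH$ as an iterated cone over its entries equally exhibits $\cU^{3,0,0}$ as an iterated cone over $\cH$ and the remaining entries, and those remaining entries are (up to trivial tensor factors) all in $\bB_1$. Hence $\cU^{3,0,0} \in \langle\, \cH, \bB_1 \,\rangle = \Bsc$ immediately, with no homological input beyond reading the bicomplex. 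This is exactly how the paper argues.

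Secondly, your reduction can and should be sharpened. After Serre duality and \cref{cor:bases}, the uncovered pairs $(\alpha,\beta)$ with $\beta \leq \alpha$ are not ``those involving one of the two tails'': a pair with $\alpha = (0,0,-2)$ and $\beta \in \bB_1 \cap \bB_2$ lies entirely in $\bB_1$ and is already handled. In fact exactly one pair survives, namely $\alpha = (3,0,0)$, $\beta = (0,0,-2)$, and this is precisely the content of \cref{lem:extremal computation}, already proved earlier in the section. Your planned Littlewood--Richardson decompositions and lifts to $\IGr(3,10)$ would amount to re-deriving that lemma; this is harmless but unnecessary, and the vague scope (``a handful of border weights'') suggests you had not pinned down that only a single pair remains.
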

Notice that this proposition does not claim that $\Bsc$ is generated by an exceptional collection or that $\Bsc$ is admissible. These properties are indeed true, but they will only be proved in \cref{thm:main}, after a more detailed study of $\cH$. 

To prove \cref{prop:Semi-O}, the starting point is to observe that $\cH$ lies in the category $\langle \, \bB_2 \, \rangle \subset \Bsc$, so that $\bB_1 \cup \bB_2$ is an alternative set of generators of $\Bsc$. Therefore, we need to compute $\Ext^\bullet$-groups between various twists of the generators of $\bB_1$ and $\bB_2$. We will need some ad hoc computations of cohomology, which we group here at the beginning of the section. 

We start with some computations on $\IGr(3,10)$. Recall the notation introduced in \cref{subsec:odd-notn}.

\begin{lemma}\label{lem:homog-600}\label{lem:homog-0-1-7}
We have the following isomorphisms:
\begin{align*}
    \H^\bullet(\IGr(3,10),\, \widetilde{\cU}^{0,0,-6}(-l))&=\begin{cases*}
      \mathbb{C}[-5] & if $l=0$, \\
      0        & if $l=1,\dots,7$,
    \end{cases*}\\
    \H^\bullet(\IGr(3,10),\, \widetilde{\cU}^{0,-1,-7}(-l))&= \begin{cases*}
      \mathbb{C}[-6] & if $l=0$, \\
      0        & if $l=1,\dots,7$.
    \end{cases*}
\end{align*}
\end{lemma}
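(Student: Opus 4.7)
The plan is to apply Borel--Bott--Weil (\cref{thm:BWB}) directly to $\IGr(3,10)$, where $n=5$ and $\rho_{\Sp_{10}}=(5,4,3,2,1)$. For each twist I would first rewrite $\widetilde{\cU}^{\lambda}(-l)=\widetilde{\cU}^{\lambda-(l,l,l)}$, extend the $\GL_3$-weight by two zeros to obtain $\gamma\in\mathbb{Z}^5$, and then compute $\gamma+\rho_{\Sp_{10}}$.

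For the fourteen vanishing statements I would verify by direct inspection that $\gamma+\rho_{\Sp_{10}}$ has either a zero entry or two entries of equal absolute value, so that \cref{thm:BWB} forces acyclicity. For instance, for $\widetilde{\cU}^{0,0,-6}(-l)$ at $l=1$ one obtains $(4,3,-4,2,1)$, in which $|\pm 4|$ is repeated; at $l=4,5$ the first or third entry becomes zero; at $l=6,7$ the first entry repeats the absolute value of the fourth or fifth. The same pattern handles $\widetilde{\cU}^{0,-1,-7}(-l)$. All fourteen checks are short and mechanical.

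The two non-vanishing cases are the content of the lemma. Here the sequences $\gamma+\rho_{\Sp_{10}}$ are $(5,4,-3,2,1)$ and $(5,3,-4,2,1)$; both have distinct nonzero absolute values and both are mapped to $\rho_{\Sp_{10}}=(5,4,3,2,1)$ by the unique element $\sigma$ of the Weyl group $\bS_5\ltimes(\mathbb{Z}/2\mathbb{Z})^5$. Consequently $\gamma'=0$ and \cref{thm:BWB} yields $W_{\Sp}^{(0,\dots,0)}[-\ell(\sigma)]=\mathbb{C}[-\ell(\sigma)]$, reducing the problem to the computation of $\ell(\sigma)$. In the first case $\sigma$ is the sign change at position $3$, which sends exactly the five positive roots $e_3\pm e_j$ for $j=4,5$ together with $2e_3$ to their negatives, giving $\ell(\sigma)=5$. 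In the second case $\sigma$ additionally swaps positions $2$ and $3$; the analogous enumeration of positive roots sent to negative roots yields $\ell(\sigma)=6$, consistent with the reduced expressions $s_3s_4s_5s_4s_3$ and $s_2s_3s_4s_5s_4s_3$ respectively.

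The only subtle point — and thus the main (still minor) obstacle — is the length computation: one must rule out shorter expressions for $\sigma$. The cleanest way is the root-counting argument just sketched, which computes $\ell(\sigma)$ intrinsically as the number of positive roots of $C_5$ inverted by $\sigma$, bypassing any need to manipulate reduced words. This gives the predicted answers $\mathbb{C}[-5]$ and $\mathbb{C}[-6]$.
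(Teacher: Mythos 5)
Your proof is correct and follows the same route as the paper: a direct application of Borel--Bott--Weil on $\IGr(3,10)$, checking that $\gamma+\rho_{\Sp_{10}}$ is singular for $l=1,\dots,7$ and regular only for $l=0$, then reading off the length. Your root-counting argument for $\ell(\sigma)$ nicely substantiates the length computations, which the paper only asserts. One parenthetical slip: for $\widetilde{\cU}^{0,0,-6}(-4)$ the sequence $(1,0,-7,2,1)$ has its zero in the \emph{second} slot rather than the first or third; the vanishing still holds regardless, since positions $1$ and $5$ share the absolute value $1$.
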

\begin{proof}
To prove both results, we apply \cref{thm:BWB}. We focus on the first isomorphism. To do so, take $\lambda=(-l,-l,-l-6),$ and consider $\gamma=(-l,-l,-(l+6),0,0)$ its trivial extension, so that:
\begin{equation*}
    \gamma+\rho_{\Sp_{10}}=(5-l,4-l,-l-3,2,1).
\end{equation*}
If $l=0$ we have $\gamma+\rho_{\Sp_{10}}=(5,4,-3,2,1)$, so all the entries are distinct in absolute value and nonzero. Let $\sigma$ be the element of the Weil group such that $\sigma(5,4,-3,2,1)=(5,4,3,2,1)\in T_5^+$, hence:
\begin{equation*}
    \gamma'=\sigma(5,4,-3,2,1)-\rho_{\Sp_{10}}=(0,0,0,0,0).
\end{equation*}
We observe that $\ell(\sigma)=5$, obtaining the result for $l=0$. On the other hand, for $l=1,\dots,7$, there is always a repetition of absolute values. The vanishing for these $l$ follows. 

In the second case, $\gamma=(-l,-1-l,-7-l,0,0)$, obtaining
\begin{equation*}
    \gamma+\rho_{\Sp_{10}}=(5-l,3-l,-4-l,2,1),
\end{equation*}
and the same argument applies. 
\end{proof}

We continue with simple cohomology computations on $\IGr(3,9)$.
\begin{lemma}\label{lem:dishomog-00-5}\label{lem:dishomog-0-1-6}
We have the following isomorphism:
\begin{align*}
     \H^{\bullet}(\IGr(3,9),\cU^{0,0,-5}(-l))&= \begin{cases*}
      \mathbb{C}[-4] & if $l=0$, \\
      0        & if $l=1,\dots,6$,
      \end{cases*}\\
      \H^\bullet(\IGr(3,9),\cU^{0,-1,-6}(-l))&= \begin{cases*}
      \mathbb{C}[-5] & if $l=0$, \\
      0        & if $l=1,\dots,6$.
      \end{cases*} 
\end{align*}
\end{lemma}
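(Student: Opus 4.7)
The plan is to mirror the proof of \cref{prop:vanish-crit}: use the embedding $j:\IGr(3,9)\hookrightarrow\IGr(3,10)$, the projection formula, and the Koszul resolution \eqref{eq:koszul} of $j_*\cO_X$ to obtain, for each weight $\lambda\in\{(0,0,-5),(0,-1,-6)\}$ and twist $l\in\{0,\ldots,6\}$, a spectral sequence
\begin{equation*}
    E_1^{-p,q}=\H^q\bigl(\IGr(3,10),\widetilde{\cU}^{\lambda}(-l)\otimes\wedge^p\widetilde{\cU}\bigr)\Rightarrow \H^{q-p}\bigl(X,\cU^{\lambda}(-l)\bigr)
\end{equation*}
running over $p\in\{0,1,2,3\}$.

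For each $(\lambda,l,p)$ I would compute the irreducible decomposition of the tensor product using the identity $\wedge^p\widetilde{\cU}=\cO(-1)\otimes\wedge^{3-p}\widetilde{\cU}^*$ combined with Pieri's formula (\cref{prop:pieri}). This gives, in each case, at most three summands $\widetilde{\cU}^{\gamma}$ with $\gamma$ obtained by adding a vertical strip of size $3-p$ to $\lambda-(1,1,1)$. For $\lambda=(0,0,-5)$ this produces exactly six summands, and for $\lambda=(0,-1,-6)$ exactly eight.

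Next I would check acyclicity of each summand via \cref{vanish-IG310}. The two "boundary" weights $(0,0,-6)$ and $(0,-1,-7)$ on $\IGr(3,10)$ (and their twists) are the only ones lying outside the strict range of \cref{vanish-IG310} that have nonvanishing cohomology, and these are already treated by \cref{lem:homog-600} and \cref{lem:homog-0-1-7}. The remaining summands whose jumps exceed the bound of \cref{vanish-IG310}, such as $\widetilde{\cU}^{-l-1,-l-1,-l-7}$, can be verified acyclic by direct inspection of $\gamma+\rho_{\Sp_{10}}$ via Borel--Bott--Weil (\cref{thm:BWB}), since they always produce a repeated absolute value or a zero in the required range.

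For $l=0$ all summands on $\IGr(3,10)$ turn out to be acyclic except the $p=1$ term containing $\widetilde{\cU}^{0,0,-6}$ (respectively $\widetilde{\cU}^{0,-1,-7}$), which contributes a single copy of $\mathbb{C}$ in cohomological degree $5$ (respectively $6$). This contribution survives to give $\H^{4}(X,\cU^{0,0,-5})=\mathbb{C}$ and $\H^{5}(X,\cU^{0,-1,-6})=\mathbb{C}$. For $l=1,\ldots,6$ every summand is acyclic, so the spectral sequence collapses to zero. The whole argument is essentially bookkeeping: the only step that is not purely mechanical is identifying the handful of decomposition summands with jumps $>5$ and verifying their acyclicity by hand via \cref{thm:BWB}.
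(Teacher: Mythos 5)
Your proposal follows essentially the same route as the paper: embed $X$ into $\IGr(3,10)$, run the Koszul spectral sequence, decompose the terms via Pieri, apply \cref{vanish-IG310} to kill almost everything, and let \cref{lem:homog-600} account for the surviving boundary terms. The one small redundancy is that $\widetilde{\cU}^{-l-1,-l-1,-l-7}$, which you propose to check separately by direct Borel--Bott--Weil, is just $\widetilde{\cU}^{0,0,-6}(-l-1)$ and hence is already subsumed by \cref{lem:homog-600}; otherwise the argument is correct and matches the paper's.
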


\begin{proof}
Let us fix an embedding $j:\IGr(3,9) \rightarrow \IGr(3,10)$ and consider the spectral sequence  \eqref{eq:koszul-vanish}, which we rewrite here for convenience:
\begin{equation*}
    E^{-p,q}_1=\H^q(\IGr(3,10), \widetilde{\cU}^\lambda \otimes \wedge^p  \widetilde{\cU}(-l))\Rightarrow \H^{q-p}(\IGr(3,9), \cU^\lambda(-l)),
\end{equation*} 
where $\widetilde{\cU}$ is the tautological bundle on $\IGr(3,10)$. We prove that the spectral sequence for $\lambda=(0,0,-5)$ and $\lambda=(0,-1,-6)$ either vanishes or, if $l=0$, it has only one nonzero term. By \cref{prop:pieri} and \cref{cor:LR-det} (compare with the proof of \cref{prop:vanish-crit}), the terms in the spectral sequence are the cohomologies on $\IGr(3,10)$ of the bundles $\widetilde{\cU}^\gamma(-l)$ for 
\begin{equation*} 
\gamma \in \bigcup\limits_{p\in\{0,\dots,k\}}\VS_{\lambda-(1,\dots,1)}^p = \{(\gamma_1,\gamma_2,\gamma_3)\mid \gamma_1\geq\gamma_2\geq \gamma_3, \quad \lambda_i\geq \gamma_i\geq \lambda_i-1\}. 
\end{equation*} 
In particular, for both $\lambda$, we have $\gamma_1 \geq \lambda_1-1 \geq -1$ and $-5\geq \lambda_3 \geq \gamma_3$. 
Moreover, we have:
\begin{equation*}
    \gamma_1-\gamma_2\leq (\lambda_1-\lambda_2) + 1  \leq 2, \quad \quad \gamma_2-\gamma_3 \leq (\lambda_2-\lambda_3) + 1 \leq 6.
\end{equation*}
If $ \gamma_2-\gamma_3\leq 5$,   \; $\widetilde{\cU}^\gamma(-l)$ is acyclic for $l=0,\dots,6$ by \cref{vanish-IG310}. 
It is immediate to determine the bundles appearing in both spectral sequences which satisfy $\gamma_2-\gamma_3=6$:
\begin{itemize}
    \item $\widetilde{\cU}^{0,0,-6}(-l)\inplus \widetilde{\cU}^{0,0,-6}\otimes \widetilde{\cU} (-l)$ if $\lambda=(0,0,-5)$;
    \item $\widetilde{\cU}^{0,0,-6}(-l-1)\inplus \widetilde{\cU}^{0,-1,-6}\otimes \wedge^2 \widetilde{\cU} (-l)$ and $\widetilde{\cU}^{0,-1,-7}(-l)\inplus \widetilde{\cU}^{0,-1,-6}\otimes \widetilde{\cU} (-l)$ in the case ${\lambda=(0,-1,-6)}$.
\end{itemize}
By \cref{lem:homog-600}, the spectral sequence vanishes identically for both $\lambda$ if $l=1,\dots,6$. If $l=0$, there is one nonzero entry in both spectral sequences by \cref{lem:homog-600}, proving the result.
\end{proof}

We need some extra computations of cohomology for \cref{sec:H-props}.
\begin{lemma}\label{lem:tensor1}\label{lem:SC^4-1-1}
We have the following isomorphisms:
\begin{align*}
\H^\bullet(\IGr(3,9),\,\cU^{1,0,-2}\otimes\cU^{0,0,-3}(-1))&=\mathbb{C}[-5],\\
\H^\bullet(\IGr(3,9),\,\cU^{1,0,-3}\otimes \cU^\lambda(-2))&=0,
\end{align*}
with $\lambda\in \{(1,0,-1),\,(1,0,0),\,(2,0,0)\} \subset P_3^+$.
\end{lemma}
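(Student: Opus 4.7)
The strategy for both isomorphisms is the same: decompose each tensor product via the Littlewood--Richardson rule into a sum $\bigoplus \cU^\gamma$, then apply the vanishing criterion \cref{vanish-IG39} to kill most summands, and handle the few remaining bundles either directly by \cref{lem:dishomog-0-1-6} (for summands whose gap $\gamma_2-\gamma_3$ equals $5$) or by lifting to $\IGr(3,10)$ through the Koszul spectral sequence \eqref{eq:koszul-vanish}, exactly as in the proof of \cref{lem:dishomog-00-5}.

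For the first isomorphism, I would rewrite $\cU^{0,0,-3}(-1)=\cU^{-1,-1,-4}$ via \cref{cor:LR-det} so that the tensor product takes the form $\cU^{-\alpha}\otimes\cU^{\beta}$ with $\alpha=(2,0,-1)$ and $\beta=(-1,-1,-4)$. The bounds of \cref{lem:sum} give $-3\leq\gamma_1\leq 0$, $-4\leq\gamma_2\leq 0$, $-6\leq\gamma_3\leq -3$ and $|\gamma|=-7$, leaving only a handful of dominant weights to inspect. A direct case check shows that every admissible $\gamma$ satisfies \cref{vanish-IG39} except the extremal one $\gamma=(0,-1,-6)$, for which $\gamma_2-\gamma_3=5$; by \cref{lem:dishomog-0-1-6} this single summand contributes cohomology $\mathbb{C}[-5]$. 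An explicit Littlewood--Richardson multiplicity check (or a Schur-functor dimension argument in the spirit of \cref{lem:Schur}) then confirms that $\cU^{0,-1,-6}$ appears with multiplicity one, yielding the claimed $\mathbb{C}[-5]$.

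For the second isomorphism, the same recipe handles each $\lambda\in\{(1,0,-1),(1,0,0),(2,0,0)\}$. I would decompose $\cU^{1,0,-3}\otimes\cU^{\lambda}(-2)$ using \cref{lem:sum} and \cref{cor:LR-det}, enumerate the admissible summands, and verify that every one of them is acyclic on $X$: the generic summand falls directly under \cref{vanish-IG39}, while for the few summands with an extremal gap $\gamma_i-\gamma_{i+1}=5$ one lifts to $\IGr(3,10)$ through \eqref{eq:koszul-vanish} and invokes \cref{vanish-IG310} together with Borel--Bott--Weil (\cref{thm:BWB}), exactly in the spirit of \cref{lem:dishomog-00-5}. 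The key point is that none of the three decompositions will produce a summand in the ``dangerous'' Borel--Bott--Weil class at twist $l=0$, so all three cohomologies vanish.

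The main obstacle is not conceptual but purely combinatorial: the enumeration of admissible weights in each Littlewood--Richardson decomposition, together with the multiplicity verification in the first claim, is the bookkeeping one must go through. Once this enumeration is written down, both statements reduce to cohomology vanishings already proved in the previous subsection, with the single non-vanishing contribution in the first claim matching exactly the output of \cref{lem:dishomog-0-1-6}.
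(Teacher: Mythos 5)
Your strategy is correct and is essentially the same as the paper's: Littlewood--Richardson decomposition, then \cref{vanish-IG39} for the bulk of the summands, then \cref{lem:dishomog-0-1-6} (respectively \cref{lem:dishomog-00-5}) for the one summand that escapes the criterion. For the second equality your enumeration via \cref{lem:sum} with $\alpha=(5,2,1)$, $\beta=\lambda$ gives exactly the bounds the paper uses, and the single surviving weight $\gamma=(-1,-1,-6)=\cU^{0,0,-5}(-1)$ is handled by simply quoting \cref{lem:dishomog-00-5} (for $l=1$) rather than re-running the lift to $\IGr(3,10)$, which would just reprove that lemma.

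The only soft spot is in your treatment of the first equality. The bounds from \cref{lem:sum} are necessary but not sufficient, so you correctly note that one must \emph{also} verify that $\cU^{0,-1,-6}$ actually appears, and with multiplicity exactly one (otherwise the answer would be $0$ or $\mathbb{C}^m[-5]$ with $m>1$). Your fallback ``Schur-functor dimension argument in the spirit of \cref{lem:Schur}'' does not directly apply, since that lemma only detects the weight $(l,l,l)$ in a product. The cleaner observation, and the one the paper uses, is that one tensor factor is $\cU^{0,0,-3}=S^{3}\cU$, so the whole decomposition falls under Pieri's formula (\cref{prop:pieri}) and is therefore multiplicity-free by construction; writing it out gives the six summands $\cU^{0,-1,-6}\oplus\cU^{0,-2,-5}\oplus\cU^{-1,-1,-5}\oplus\cU^{0,-3,-4}\oplus\cU^{-1,-2,-4}\oplus\cU^{-1,-3,-3}$, with $\cU^{0,-1,-6}$ visibly present exactly once. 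Your explicit Littlewood--Richardson check would of course also settle this, but Pieri gets it for free.
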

\begin{proof}
We prove the first equality. Applying Pieri's formula (\cref{prop:pieri}), we obtain:
\begin{align*}
    \cU^{1,0,-2} \otimes \cU^{0,0,-3}(-1) &= \cU^{0,-1,-6}\oplus \cU^{0,-2,-5} \oplus  \cU^{-1,-1,-5} \oplus \\ &\oplus \cU^{0,-3,-4} \oplus \cU^{-1,-2,-4} \oplus \cU^{-1,-3,-3}.
\end{align*}
By \cref{vanish-IG39}, all the bundles in the decomposition are acyclic except possibly $\cU^{0,-1,-6}$. Applying \cref{lem:dishomog-0-1-6}, we deduce the first equality. 

We now prove the last three vanishings. Let $\gamma=(\gamma_1,\gamma_2,\gamma_3)\in P_3^+$ such that:
\begin{equation*}
    \cU^\gamma \inplus \cU^{1,0,-3}\otimes \cU^\lambda(-2),
\end{equation*}
for $\lambda\in\{(1,0,-1),\,(1,0,0),\,(2,0,0)\}$.
We can deduce the following facts from \cref{lem:sum}, fixing $\alpha=(5,2,1)=(3,0,-1)+(2,2,2)$ and $\beta=\lambda$ as above:
\begin{equation*}
    \beta_1-1 \geq \gamma_1 \geq \beta_3-1, \quad
    \beta_2-1 \geq \gamma_2 \geq \beta_3-2, \quad
    \beta_3-1 \geq \gamma_3 \geq \beta_3-5, \quad \abs{\gamma}=\abs{\beta}-8.
\end{equation*}
It is immediate to see that any possible $\cU^\gamma$ is acyclic by \cref{vanish-IG39}, unless $\gamma=(-1,-1,-6)$ and $\beta=(1,0,-1)$. The bundle $\cU^{-1,-1,-6}$ is acyclic by \cref{lem:dishomog-00-5}, proving the claim.
\end{proof}

The next lemma shows that $\bB_1 \cup \bB_2$ is not an exceptional collection; that is the only reason stopping $\bB_1\cup \bB_2$ from extending to a Lefschetz collection.

\begin{lemma}\label{lem:extremal computation}
There is the following isomorphism:
\begin{equation*}
    \Ext^\bullet(\cU^{3,0,0}(l),\cU^{0,0,-2})=\begin{cases*}
      \mathbb{C}[-4] & if $l=0$, \\
      0        & if $l=1,\dots,6$.
    \end{cases*}
\end{equation*}
\end{lemma}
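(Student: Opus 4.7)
The plan is to reduce the $\Ext$ computation to a cohomology computation, decompose via Pieri's formula, and apply the vanishing tools already at our disposal.

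First I would rewrite
\begin{equation*}
\Ext^\bullet(\cU^{3,0,0}(l),\cU^{0,0,-2}) = \H^\bullet\bigl(X,\,\cU^{0,0,-3}\otimes \cU^{0,0,-2}(-l)\bigr),
\end{equation*}
using that $(\cU^{3,0,0})^* = \cU^{0,0,-3}$ by \eqref{eq:weight-dual}. Next I would decompose the tensor product using Pieri's formula (\cref{prop:pieri}); dually we have $\cU^{3,0,0}\otimes \cU^{2,0,0} = \cU^{5,0,0}\oplus \cU^{4,1,0}\oplus \cU^{3,2,0}$, so
\begin{equation*}
\cU^{0,0,-3}\otimes \cU^{0,0,-2} \;=\; \cU^{0,0,-5}\,\oplus\,\cU^{0,-1,-4}\,\oplus\,\cU^{0,-2,-3}.
\end{equation*}

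It then suffices to compute $\H^\bullet(X, \cU^\gamma(-l))$ for $\gamma\in\{(0,0,-5),(0,-1,-4),(0,-2,-3)\}$ and $l=0,\dots,6$. For $\gamma=(0,-1,-4)$ and $\gamma=(0,-2,-3)$, the consecutive differences $(1,3)$ and $(2,1)$ are both at most $4$, and the top entry of $\gamma-(l,l,l)$ satisfies $-l\geq -6$ with bottom entry negative, so \cref{vanish-IG39} applies and these summands are acyclic for every $l=0,\dots,6$. The summand $\cU^{0,0,-5}$ falls outside the criterion because the jump $\gamma_2-\gamma_3=5$ is too large, but its cohomology is exactly the content of \cref{lem:dishomog-00-5}: it contributes $\mathbb{C}[-4]$ when $l=0$ and vanishes for $l=1,\dots,6$.

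Putting these together gives the claimed formula. The only potential subtlety is making sure the Pieri decomposition is correctly computed and that the borderline summand is precisely the one already handled by \cref{lem:dishomog-00-5}; no serious obstacle arises.
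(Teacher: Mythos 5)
Your proposal is correct and matches the paper's proof essentially step for step: same reduction to sheaf cohomology, same Pieri decomposition $\cU^{0,0,-3}\otimes\cU^{0,0,-2}=\cU^{0,0,-5}\oplus\cU^{0,-1,-4}\oplus\cU^{0,-2,-3}$, with \cref{vanish-IG39} handling the last two summands and \cref{lem:dishomog-00-5} handling $\cU^{0,0,-5}$. Nothing to add.
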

\begin{proof}
We have $(\cU^{3,0,0})^*\cong \cU^{0,0,-3}$ and by Pieri's formula:
\begin{equation*}
\cU^{0,0,-3}\otimes\cU^{0,0,-2} = \cU^{0,0,-5} \oplus \cU^{0,-1,-4} \oplus \cU^{0,-2,-3}.
\end{equation*}
Using \cref{vanish-IG39}, we verify that for all bundles $\cU^{\gamma}\inplus \cU^{0,0,-3}\otimes\cU^{0,0,-2}$, the bundle $\cU^{\gamma}(-l)$ is acyclic for $l=0,\dots,6$, except for $\gamma=(0,0,-5)$. For $\gamma=(0,0,-5)$, we apply \cref{lem:dishomog-00-5} and we deduce the claim.
\end{proof}

In light of the vanishing criterion given by \cref{vanish-IG39}, we highlight the main property of the collections $\bB_1$ and $\bB_2$.

\begin{lemma}\label{prop:max-jumps}
Let $\alpha,\beta \in P^+_3$ such that $\cU^{\alpha}, \cU^{\beta} \in \bB_j$ for the same $j=1,2$. Suppose that
\begin{equation*}
    \cU^\gamma \inplus \cU^{-\alpha} \otimes \cU^{\beta}. 
\end{equation*}
Then, $\gamma$ satisfies:
\begin{equation*}
    \gamma_1-\gamma_2 \leq 4 \quad \text{and} \quad \gamma_2-\gamma_3 \leq 4.
\end{equation*}
\end{lemma}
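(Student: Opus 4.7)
My plan is to combine the jump inequality from \cref{lem:sum} with a direct Pieri computation for the handful of cases where that inequality alone is insufficient.

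The starting observation is that all weights $\lambda$ with $\cU^\lambda \in \bB_1 \cup \bB_2$ satisfy $\lambda_2 = 0$. Writing $\alpha = (a_1,\,0,\,-a_3)$ and $\beta = (b_1,\,0,\,-b_3)$ with nonnegative entries, the $k=3$ case of \eqref{eq:jumps} becomes
\begin{align*}
\gamma_1 - \gamma_2 &\leq (\beta_1-\beta_3) + (\alpha_2-\alpha_3) = b_1 + b_3 + a_3,\\
\gamma_2 - \gamma_3 &\leq (\beta_2-\beta_3) + (\alpha_1-\alpha_3) = a_1 + a_3 + b_3.
\end{align*}
For $\bB_2$, inspection of the seven weights immediately yields $a_1+a_3,\,b_1+b_3 \leq 3$ and $a_3,\,b_3 \leq 1$, so both right-hand sides are at most $3+1=4$ and the claim follows.

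For $\bB_1$, the sums $a_1+a_3$ and $b_1+b_3$ are still bounded by $3$, but now $a_3, b_3$ can reach $2$, so the naive bounds only give $\leq 5$. The key step is to pinpoint where equality at $5$ occurs: $b_1 + b_3 + a_3 = 5$ forces $\beta = (2,0,-1)$ and $\alpha = (0,0,-2)$, while $b_3 + a_1 + a_3 = 5$ forces the dual pair $\alpha = (2,0,-1)$, $\beta = (0,0,-2)$. These two exceptional configurations are exchanged by the natural duality $\cU^\gamma \inplus \cU^{-\alpha}\otimes \cU^\beta \;\Leftrightarrow\; \cU^{-\gamma} \inplus \cU^{-\beta}\otimes \cU^\alpha$, which swaps the first and second jumps of $\gamma$ and $-\gamma$; so only one of them needs direct treatment.

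For the remaining case one has $\cU^{-\alpha}\otimes \cU^\beta = S^2\cU^* \otimes \cU^{2,0,-1}$, and I would decompose it explicitly using \cref{prop:pieri}; the resulting direct sum contains only five irreducible summands, from which one reads off that the maximal first jump equals $4$. The genuinely non-routine step is really just the case-analysis: once one recognises that the naive bound from \cref{lem:sum} fails only in a single (essentially unique) tensor product, closing the argument is an immediate Pieri enumeration.
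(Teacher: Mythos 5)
Your proof is correct and follows essentially the same route as the paper: apply the jump inequality from \cref{lem:sum}, observe that it fails to give the bound only for $\alpha=(0,0,-2)$, $\beta=(2,0,-1)$ (or the dual pair), and dispatch that case by an explicit Pieri decomposition of $S^2\cU^*\otimes\cU^{2,0,-1}$. The separation into $\bB_2$ and $\bB_1$ and the explicit duality reduction are minor organizational refinements, but the key lemma, the exceptional case, and the closing computation all coincide with the paper's argument.
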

\begin{proof}
Let $\lambda$ be such that $\cU^\lambda \in \bB_1\cup \bB_2$, then it satisfies 
\begin{equation*}
    \lambda_1-\lambda_3\leq 3 \quad \textrm{and} \quad \lambda_2-\lambda_3\leq 2,
\end{equation*}
where the first equality is attained for $\lambda=(2,0,-1)\in \bB_1\cap \bB_2$ and $\lambda=(3,0,0)\in \bB_2 \setminus \bB_1$, while the second only for $\lambda=(0,0,-2)\in \bB_1 \setminus \bB_2$. 
By \eqref{eq:jumps}, we have:
\begin{equation*} 
    \gamma_1-\gamma_2 \leq 4 \quad \textrm{and} \quad \gamma_2-\gamma_3 \leq 4,
\end{equation*}
except possibly for $\alpha = (0,0,-2)$ and $\beta = (2,0,-1)$ or $(3,0,0)$ or vice versa. This finishes the proof for $\cU^{\alpha}, \cU^{\beta} \in\bB_2$.
Finally, if $\alpha=(0,0,-2)$ and $\beta=(2,0,-1)$, we have by Pieri's formula:
\begin{equation*}
    \cU^{2,0,0}\otimes \cU^{2,0,-1} = \cU^{4,0,-1}\oplus \cU^{3,1,-1}\oplus \cU^{3,0,0}\oplus \cU^{2,2,-1}\oplus\cU^{2,1,0},
\end{equation*}
so that we can verify the claim.
\end{proof}
Now we finished preparations and we can  prove the Lefschetz property for $\bB_1$ and $\bB_2$. 
\begin{proposition}\label{prop:long}
Let $\alpha,\beta \in P^+_3$, with $\beta \leq \alpha$ with respect to \eqref{eq:except-order}, such that $\cU^{\alpha}, \cU^{\beta} \in \bB_j$ for the same $j=1,2$. Then 
\begin{equation*}
\Ext^\bullet(\cU^\alpha(l),\cU^\beta)=0 \quad \text{for} \quad l=0,\dots 6,
\end{equation*}
unless $l=0$ and $\alpha=\beta$. In that case, we have:
\begin{equation*}
\Ext^\bullet(\cU^\alpha,\cU^\alpha)= 
    \mathbb{C}. 
\end{equation*}
\end{proposition}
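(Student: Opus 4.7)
The plan is to reduce each Ext-computation to the vanishing criterion \cref{vanish-IG39}. Writing $\Ext^\bullet(\cU^\alpha(l),\cU^\beta) = \H^\bullet(X, \cU^{-\alpha}\otimes \cU^\beta(-l))$ and decomposing via Littlewood--Richardson as $\cU^{-\alpha}\otimes \cU^\beta = \bigoplus_\gamma \cU^\gamma$, it suffices to check vanishing of $\H^\bullet(X, \cU^\gamma(-l))$ for each summand (and to identify the $\mathbb{C}$ summand when $l=0$ and $\alpha=\beta$). By \cref{prop:max-jumps}, every such $\gamma$ automatically satisfies the jump bounds $\gamma_1-\gamma_2,\gamma_2-\gamma_3\leq 4$ of \cref{vanish-IG39}, so that hypothesis is free throughout the argument.

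The trivial summand $\cU^{(0,0,0)} = \cO$ appears in the decomposition, by \cref{lem:Schur}, precisely when $\alpha=\beta$, and with multiplicity one. It contributes $\H^\bullet(\cO)=\mathbb{C}$ at $l=0$ (yielding the exceptional identity), and $\cO(-l)$ is acyclic for $l=1,\dots,6$ by \cref{vanish-IG39} applied to $\lambda=(-l,-l,-l)$. For every other summand $\cU^\gamma$ a short argument excludes $\gamma_3=0$: by \cref{lem:sum} and $\alpha_2=\beta_2=0$, $\gamma_3\leq\beta_3-\alpha_3\leq 0$, and equality would force $\beta_3=\alpha_3$, whence $|\gamma|=\beta_1-\alpha_1\leq 0$ (using $\beta\leq\alpha$) and then $\gamma_1\geq\gamma_2\geq 0$ collapses $\gamma$ to zero.

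Next I apply \cref{vanish-IG39} to the twisted bundle $\cU^\gamma(-l)=\cU^{\gamma-(l,l,l)}$. The condition $\lambda_3<0$ becomes $\gamma_3<l$, which is immediate (either $\gamma_3<0$ from the previous step, or $\gamma_3\leq 0<l$ for $l\geq 1$). The condition $\lambda_1\geq -6$ becomes $\gamma_1\geq l-6$; since $\gamma_1\geq \beta_3-\alpha_3\geq -2$ by \cref{lem:sum}, this holds for $l=0,1,2,3,4$ directly.

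The main obstacle is the remaining range $l=5,6$, where the bound $\gamma_1\geq -2$ falls short by at most two units. The resolution is Serre duality on $X$ (with $\dim X=15$ and $\omega_X=\cO(-7)$),
\begin{equation*}
    \Ext^\bullet(\cU^\alpha(l),\cU^\beta)\cong \Ext^\bullet(\cU^\beta(7-l),\cU^\alpha)^*[-15],
\end{equation*}
which trades $l\in\{5,6\}$ for $l'=7-l\in\{1,2\}$ at the cost of exchanging the roles of $\alpha$ and $\beta$. On the swapped tensor product $\cU^{-\beta}\otimes\cU^\alpha=\bigoplus_{\gamma'}\cU^{\gamma'}$ the analogous bound from \cref{lem:sum} reads $\gamma'_1\geq \alpha_3-\beta_3\geq 0$, where the last inequality is exactly $\beta_3\leq\alpha_3$, a consequence of $\beta\leq\alpha$. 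This makes $\gamma'_1\geq l'-6$ automatic for $l'\leq 6$, and the other hypotheses of \cref{vanish-IG39} are verified exactly as before, closing the argument.
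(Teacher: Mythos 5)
Your overall strategy — Littlewood--Richardson decomposition, control of the $\gamma_i$ via \cref{lem:sum}, then \cref{vanish-IG39} — is the right one, and the treatment of the trivial summand and of the constraint $\gamma_3<0$ is correct. However, there is a gap in the Serre duality step.

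Your lower bound $\gamma_1\geq\beta_3-\alpha_3\geq -2$ comes only from \cref{lem:sum}; this is weaker than what actually holds, which is why you are forced into the Serre duality detour for $l\in\{5,6\}$. In the dual computation you then claim that ``the other hypotheses of \cref{vanish-IG39} are verified exactly as before.'' That is not so for the condition $\gamma'_3<l'$: on the swapped tensor product $\cU^{-\beta}\otimes\cU^\alpha$, \cref{lem:sum} gives $\gamma'_3\leq\alpha_3-\beta_3$, and since $\beta\leq\alpha$ this upper bound is \emph{nonnegative} (it can be as large as $2$, e.g.\ $\alpha=(2,0,0)$, $\beta=(0,0,-2)\in\bB_1$). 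So the argument you used to show $\gamma_3\leq 0$ in the original direction simply does not transfer, and you have not verified $\gamma'_3<l'$ for $l'\in\{1,2\}$.

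The conclusion $\gamma'_3\leq 0$ is in fact true, but establishing it requires the observation that one of the two factors in $\cU^{-\beta}\otimes\cU^\alpha$ is always a symmetric power (because every weight in $\bB_1\cup\bB_2$ has vanishing middle entry, and if $\alpha_3\neq\beta_3$ one of $\alpha_3=0$ or $\beta_3=-2$ must hold); a Pieri argument then pins $\gamma'_3\leq 0$. This is precisely the observation the paper makes — but it makes it \emph{in the original direction}, applying Pieri to show $\gamma_1\geq 0$ rather than the weaker $\gamma_1\geq -2$. With $\gamma_1\geq 0$ in hand, the clause ``If $\lambda_1\geq 0$ instead'' of \cref{vanish-IG39} handles all $l=0,\dots,6$ at once, and the Serre duality detour disappears entirely. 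You should either sharpen your $\gamma_1$ bound via the same Pieri case analysis (which renders the Serre duality step superfluous) or supply a genuine argument for $\gamma'_3\leq 0$ in the dual computation.
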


\begin{proof}
With the notation presented above, consider the decomposition:
\begin{equation*}
\cU^{\gamma}\inplus \cU^{-\alpha}\otimes \cU^{\beta}.
\end{equation*}
We claim that either:
\begin{itemize}
\item $\gamma_1 \geq 0$ and $\gamma_3<0$; or\item $\gamma=(0,0,0)$ and $\alpha = \beta$, and it appears with multiplicity one.
\end{itemize}
Applying \cref{lem:sum} and recalling $\beta \leq \alpha$, we obtain:
\begin{equation}\label{eq:intermediate}
     \gamma_1 \geq \beta_3-\alpha_3, \quad
    0\geq \beta_3-\alpha_3 \geq \gamma_3, \quad \gamma_1+\gamma_2+\gamma_3=\abs{\gamma}=\abs{\beta}-\abs{\alpha}.
\end{equation}
We first prove that every $\gamma$ satisfies $\gamma_1\geq 0$. If $\alpha_3=\beta_3$, we obtain $\gamma_1 \geq 0$. Otherwise, at least one between $\alpha_3,\beta_3 \neq -1$, so we obtain that either $\cU^{-\alpha}=S^{\alpha_1} \cU$ if $\alpha_3=0$ or $\cU^{\beta}=S^{-\beta_3} \cU$ if $\beta_3=-2$, while the other factor is some $\cU^\lambda \in \bB_1\cup \bB_2$, with $\lambda_2=0$. Applying Pieri's formula, we conclude that $\gamma_1\geq \lambda_2=0$.  This proves  that all $\gamma$ 
satisfy $\gamma_1 \geq 0$.

We now focus on $\gamma_3$. If $\gamma_3=0$, then $\beta_3=\alpha_3$ by \eqref{eq:intermediate} and $\beta_1\leq \alpha_1$ by \eqref{eq:except-order}, obtaining  $\abs{\gamma}\leq0$. This shows $\gamma=(0,0,0)$. By \cref{lem:Schur}, we obtain $\alpha=\beta$ and $\gamma$ has multiplicity one. Alternatively, by \eqref{eq:intermediate}, we have $\gamma_3<0$, hence $\gamma$ falls in the first case of the initial claim. 

By \cref{prop:max-jumps} we already know $\gamma_1-\gamma_2\leq 4$ and $\gamma_2-\gamma_3\leq 4$, applying \cref{vanish-IG39}, this is enough to conclude. 
\end{proof}

The following corollary is an immediate consequence of \cref{prop:long} and \cref{lem:criterion-lef}. 

\begin{corollary}\label{cor:bases}
The collections 
\begin{equation*}
\begin{aligned}
    \bB_1 &= \{ \ \cU^{0,0,-2}, &\cU^{0,0,-1},\quad &\cU^{1,0,-1}, &\cU^{2,0,-1},\quad &\cU^{0,0,0},  &\cU^{1,0,0},\quad &\cU^{2,0,0} &\}, \\
    \bB_2 &= \{&\cU^{0,0,-1},\quad &\cU^{1,0,-1}, &\cU^{2,0,-1},\quad &\cU^{0,0,0},  &\cU^{1,0,0},\quad
    &\cU^{2,0,0}, &\cU^{3,0,0} \ \}.
\end{aligned}
\end{equation*}
are bases of rectangular Lefschetz exceptional collections of length $7$.
\end{corollary}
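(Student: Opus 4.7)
The plan is to derive the corollary directly from Proposition~\ref{prop:long} together with the criterion given by Lemma~\ref{lem:criterion-lef}, after verifying that the ordering chosen for $\bB_1$ and $\bB_2$ matches the hypothesis of Proposition~\ref{prop:long}. Recall that both collections are ordered according to the lexicographic order \eqref{eq:except-order}, so for indices $i \leq j$ in either collection we have $\beta \leq \alpha$ where $\cU^\beta$ is the $i$-th entry and $\cU^\alpha$ is the $j$-th entry.

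First I would check that each $\bB_j$ is an exceptional collection. For any pair $\cU^\alpha, \cU^\beta \in \bB_j$ with $\beta \leq \alpha$, Proposition~\ref{prop:long} applied with $l=0$ gives $\Ext^\bullet(\cU^\alpha,\cU^\beta)=0$ when $\alpha \neq \beta$, which yields the semiorthogonality condition. Applying the same proposition with $\alpha = \beta$ yields $\Ext^\bullet(\cU^\alpha,\cU^\alpha) = \mathbb{C}$, so every object is exceptional. Hence $\bB_1$ and $\bB_2$ are exceptional collections.

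Next I would verify the second hypothesis of Lemma~\ref{lem:criterion-lef}. The index of $X = \IGr(3,9)$ is $w=7$ by Proposition~\ref{prop:invariants-easy}, so the required vanishings are $\Ext^\bullet(\cU^\alpha(t),\cU^\beta)=0$ for $1\leq t \leq 6$ and all pairs $\cU^\beta,\cU^\alpha$ in $\bB_j$ with $\beta \leq \alpha$. This is precisely the content of Proposition~\ref{prop:long} in the remaining range $l = 1, \ldots, 6$. Combining both verifications and invoking Lemma~\ref{lem:criterion-lef} yields that $\bB_1$ and $\bB_2$ extend to rectangular Lefschetz collections of length $7$, which is the claim of the corollary. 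No genuine obstacle arises at this stage: the entire difficulty has been absorbed into the proof of Proposition~\ref{prop:long}, and the corollary is a direct bookkeeping consequence.
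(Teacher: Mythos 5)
Your proposal is correct and matches the paper's approach exactly: the paper states that \cref{cor:bases} is an immediate consequence of \cref{prop:long} and \cref{lem:criterion-lef}, and your write-up is precisely the bookkeeping that "immediate consequence" refers to (applying \cref{prop:long} at $l=0$ for exceptionality and at $l=1,\dots,6$ for the Lefschetz condition, then invoking \cref{lem:criterion-lef} with $w=7$).
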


We conclude the section proving \cref{prop:Semi-O}.
\begin{proof}[Proof of \cref{prop:Semi-O}.]
As $\cH$ is the totalization of the bicomplex \eqref{H-intro}, $\cH \in \langle \, \bB_2 \, \rangle$ because it admits a resolution where all terms belong to $\bB_2$. On the other hand, the same resolution of $\cH$ shows that $\cU^{3,0,0}\in \Bsc$, the category generated by $\bB$ (which is possibly not an exceptional sequence, \cf \cref{thm:main}). Hence, the subcategory $\Bsc$ can be generated by  $\bB_1\cup \bB_2$ (which is not an exceptional sequence). 

By \cref{lem:criterion-lef}, to prove semiorthogonality it is enough to verify 
\begin{equation}\label{eq:test} \Ext^\bullet(\cU^\alpha(l), \cU^\beta)=0\quad \textrm{for} \quad \beta \leq \alpha \quad \textrm{and} \quad 1\leq l\leq 6,
\end{equation}
with $\cU^\alpha, \cU^\beta \in \bB_1\cup \bB_2$. By \cref{cor:bases}, the equation \eqref{eq:test} holds if both $\alpha,\beta \in \bB_1$ or both $\alpha, \beta \in \bB_2$. On the other hand, if $\alpha=(3,0,0)$ and $\beta=(0,0,-2)$, 
we have 
\begin{equation*}
    \Ext^\bullet(\cU^{3,0,0}(l), \cU^{0,0,-2})=0\quad \textrm{for} \quad 1\leq l\leq 6
\end{equation*}
by \cref{lem:extremal computation}, proving the initial statement.
\end{proof}

\subsection{Properties of $\cH$}\label{sec:H-props}

Recall the definition of the object $\cH$ as convolution of the bicomplex \eqref{H-intro}. The purpose of this section is to show that $\cH$ is an exceptional object (\cf \cref{excep}) completing $\bB_1$ to a Lefschetz basis (\cf \cref{thm:main}), hence we need to show that $\cH$ is exceptional and 
$\cH \in \bB_1^\perp$.

We first outline the properties of the lines of the bicomplex. Let us consider the (acyclic!) staircase complex on $\IGr(3,9)$ (\cf \cref{thm:staircase}) associated to $\cU^{3,0,0}$:
\begin{multline}\label{eq:E}
    0 \rightarrow \cU^{0,0,-3}(-1) \rightarrow \wedge^8 V^* \otimes \cU^{0,0,-2}(-1) \rightarrow \wedge^7 V^* \otimes \cU^{0,0,-1}(-1) \rightarrow \wedge^6 V^* \otimes \cO(-1) \rightarrow  \\ \rightarrow \wedge^3V^* \otimes \cO \rightarrow \wedge^2V^* \otimes \cU^{1,0,0} \rightarrow V^* \otimes \cU^{2,0,0} \rightarrow  \cU^{3,0,0} \rightarrow 0.
\end{multline}
The complex \eqref{eq:E} is self-dual, as it is the unique complex with nonzero $\GL$-equivariant differentials and these terms (see \cref{thm:staircase}). We define the object $\cE$ as the stupid truncation of the complex between the lines. As the complex is exact, this induces two resolutions of $\cE$ by $\Sp_9$-equivariant bundles (\cf \cref{subsec:odd-grass}). More explicitly, there are two exact sequences:
\begin{multline}\label{eq:left E}
     0 \rightarrow \cU^{0,0,-3}(-1) \rightarrow \wedge^8 V^* \otimes \cU^{0,0,-2}(-1) \rightarrow \\ \rightarrow \wedge^7 V^* \otimes \cU^{0,0,-1}(-1) \rightarrow \wedge^6 V^* \otimes \cO(-1) \rightarrow \cE \rightarrow 0
\end{multline}
and 
\begin{equation}\label{eq:right E}
0\rightarrow \cE \rightarrow \wedge^3V^* \otimes \cO \rightarrow \wedge^2V^* \otimes \cU^{1,0,0} \rightarrow V^* \otimes \cU^{2,0,0} \rightarrow  \cU^{3,0,0} \rightarrow 0.
\end{equation}
As we can deduce from the self-duality of \eqref{eq:E},
\begin{equation}\label{property E}
    \cE\cong \cE^*(-1).
\end{equation}

We now consider the (acyclic!) staircase complex on $\IGr(3,9)$ induced by $\cU^{2,0,-1}$:
\begin{multline}\label{eq:staircase 20-1}
    0\rightarrow \cU^{1,0,-3}(-2) \rightarrow \wedge^8 V^*\otimes \cU^{1,0,-2}(-2) \rightarrow \\ \rightarrow  \wedge^7 V^*\otimes \cU^{1,0,-1}(-2)\rightarrow \wedge^6 V^*\otimes \cU^{1,0,0}(-2) \rightarrow 
    \wedge^4 V^*\otimes \cO(-1) \\ \rightarrow \wedge^2V^* \otimes \cU^{0,0,-1} \rightarrow V^* \otimes \cU^{1,0,-1} \rightarrow  \cU^{2,0,-1} \rightarrow 0.
\end{multline}
The object $\cF$ is the stupid truncation of the complex above between the middle and the lowest line. We obtain two resolutions of $\cF$. More explicitly, there are two exact sequences:
\begin{multline}\label{eq:left F}
    0\rightarrow \cU^{1,0,-3}(-2) \rightarrow \wedge^8 V^*\otimes \cU^{1,0,-2}(-2) \rightarrow \wedge^7 V^*\otimes \cU^{1,0,-1}(-2)\rightarrow\\ \rightarrow  \wedge^6 V^*\otimes \cU^{1,0,0}(-2) \rightarrow 
    \wedge^4 V^*\otimes \cO(-1)\rightarrow \cF\rightarrow 0
\end{multline}
and 
\begin{equation}\label{eq:right F}
0 \rightarrow \cF \rightarrow  \wedge^2V^* \otimes \cU^{0,0,-1} \rightarrow V^* \otimes \cU^{1,0,-1} \rightarrow  \cU^{2,0,-1} \rightarrow 0.    
\end{equation}
We observe that \begin{equation}\label{eq:E-F-B}
\cE, \cF\in \Bsc 
\end{equation} 
because all the terms in their right resolutions \eqref{eq:right E} and \eqref{eq:right F} belong to $\Bsc$. Notice that $\cE$ and $\cF$ are vector bundles by \cite[Theorem~19.2]{eisenbud2013commutative}. 

We first compute the $\Ext^\bullet$-groups between $\cE$ and $\cF$ and show that the sequences \eqref{eq:right E} and \eqref{eq:right F} induce mutation triangles. To do so, we introduce the following subsets of $\bB_1$:
\begin{equation}\label{eq:def-S}
    \begin{aligned}
    \bS_1 &= \{ \cU^{0,0,0}, \cU^{1,0,0}, \cU^{2,0,0}  \}, \\
    \bS_2 &= \{  \cU^{0,0,-2} , \cU^{0,0,-1}, \cU^{1,0,-1} \}, \\
    \bS &= \bB_1 \setminus \{ \cU^{2,0,-1} \} = \bS_1 \cup \bS_2.
\end{aligned}
\end{equation}
An observation that simplifies many computations in this section is that:
\begin{equation}\label{eq:S-property}
    \bS^* = \bS \subset \Bsc,
\end{equation}
where by $\bS^*$ we denote all the duals of the elements in $\bS$.

\begin{proposition}\label{prop:mutate}
The bundle $\cE$ is right orthogonal to $\cF$:
\begin{equation*}
    \Ext^\bullet(\cE,\cF)=0.
\end{equation*}
Moreover, $\cE$ and $\cF$ are right orthogonal to $\bS$, that is:
\begin{equation*}
    \Ext^\bullet(\bS, \cE)= \Ext^\bullet(\bS, \cF)=0.
\end{equation*}
Finally, the following isomorphisms hold:
\begin{equation*}
    \cE[3] = \mathbb{L}_{\bS_1\cap \bB_2} \;\cU^{3,0,0} = \mathbb{L}_{\bS} \;\cU^{3,0,0} , \quad \quad \cF[2] =\mathbb{L}_{\bS_2 \cap \bB_2} \;\cU^{2,0,-1} = \mathbb{L}_{\bS} \;\cU^{2,0,-1}.
\end{equation*}
\end{proposition}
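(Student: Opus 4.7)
The plan is to play the left and right resolutions of $\cE$ and $\cF$ against each other. The left resolutions \eqref{eq:left E} and \eqref{eq:left F} have inner terms of the form $\cU^\beta(-t)$ with $t\in\{1,2\}$, most of which have $\cU^\beta\in\bB_1$, which makes the rectangular Lefschetz property of $\bB_1$ directly applicable for proving orthogonality with $\bS\subset\bB_1$. The right resolutions \eqref{eq:right E} and \eqref{eq:right F} will instead be unfolded into left-mutation triangles for $\cU^{3,0,0}$ and $\cU^{2,0,-1}$.

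For $\Ext^\bullet(\bS,\cE)=0$ I apply $\Ext^\bullet(\cU^\alpha,-)$ to \eqref{eq:left E}; the resulting spectral sequence has $E_1$-terms $\Ext^\bullet(\cU^\alpha,\cU^\beta(-1))$ for $\beta\in\{(0,0,0),(0,0,-1),(0,0,-2),(0,0,-3)\}$. For the first three values of $\beta$, both $\cU^\alpha$ and $\cU^\beta$ lie in $\bB_1$ and vanishing is immediate from \cref{cor:bases} with twist $t=1$. The extremal $\beta=(0,0,-3)$ falls outside $\bB_1$, and is handled by a Pieri decomposition of $\cU^{-\alpha}\otimes\cU^{-1,-1,-4}$ and a check of each summand against \cref{vanish-IG39}. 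The argument for $\Ext^\bullet(\bS,\cF)=0$ is parallel via \eqref{eq:left F}: the terms $\cU^{1,0,*}(-2)$ with $*\in\{-1,0\}$ and $\cO(-1)$ vanish by \cref{cor:bases} with twist $t\in\{1,2\}\subseteq\{1,\dots,w-1\}$, while the extremal $\cU^{1,0,-2}(-2)$ and $\cU^{1,0,-3}(-2)$ are addressed by direct Pieri + \cref{vanish-IG39} computations, with \cref{lem:tensor1} covering the case $\alpha\in\bS_2$ of $\cU^{1,0,-3}(-2)$ (since then $(\cU^\alpha)^*\in\{(1,0,-1),(1,0,0),(2,0,0)\}$).

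For $\Ext^\bullet(\cE,\cF)=0$, apply $\Ext^\bullet(-,\cF)$ to \eqref{eq:right E}: the three middle terms lie in $\bS_1\subset\bS$ and contribute zero by the orthogonality just proved; the remaining term $\Ext^\bullet(\cU^{3,0,0},\cF)$ is reduced, via the right resolution \eqref{eq:right F} of $\cF$, to $\Ext^\bullet(\cU^{3,0,0},\cU^{j,0,-1})=0$ for $j\in\{0,1,2\}$, which holds because $\cU^{3,0,0}$ is the maximal element of the exceptional collection $\bB_2$ (\cref{cor:bases}).

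Finally, the mutation identifications are formal. The exact sequence \eqref{eq:right E} unfolds, via the truncation-triangle of its central 3-term complex $C=[\wedge^3V^*\otimes\cO\to\wedge^2V^*\otimes\cU^{1,0,0}\to V^*\otimes\cU^{2,0,0}]$, into the distinguished triangle
\[
C[2]\;\longrightarrow\;\cU^{3,0,0}\;\longrightarrow\;\cE[3],
\]
whose left term lies in $\langle\bS_1\cap\bB_2\rangle=\langle\bS_1\rangle$ and whose right term lies in $\bS^\perp\subseteq\langle\bS_1\rangle^\perp$ by the orthogonality established above. This is precisely the defining triangle of the left mutation, yielding $\cE[3]=\mathbb{L}_{\bS_1\cap\bB_2}\cU^{3,0,0}$; viewed with respect to the larger admissible subcategory $\langle\bS\rangle$, whose right orthogonal still contains $\cE[3]$, the same triangle also gives $\cE[3]=\mathbb{L}_\bS\cU^{3,0,0}$. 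The argument for $\cF[2]$ is completely parallel from \eqref{eq:right F}. The only real obstacle in the whole proof is the collection of ad hoc vanishings for the extremal terms involving $\cU^{0,0,-3}$, $\cU^{1,0,-2}$, and $\cU^{1,0,-3}$, which sit just outside $\bB_1\cup\bB_2$ and so are not covered by the Lefschetz property; these are absorbed by \cref{lem:tensor1} together with a short list of explicit Pieri + \cref{vanish-IG39} checks.
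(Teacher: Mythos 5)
Your overall architecture is sound, and the mutation-triangle identifications at the end are correct, but the route you choose for the orthogonality statements is genuinely different from the paper's and carries a concrete gap.

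The paper avoids essentially all direct Pieri/BBW computations for $\Ext^\bullet(\bS,\cE)$ by using the self-duality $\cE\cong\cE^*(-1)$ together with $\bS^*=\bS$, reducing to $\Ext^\bullet(\cE(1),\bS)$ and then invoking \cref{prop:Semi-O}. For $\Ext^\bullet(\bS,\cF)$, the paper splits into $\bS_1$ and $\bS_2$: the $\bS_1$ half comes for free from the \emph{right} resolution \eqref{eq:right F} together with the fact that $\bB_2$ is exceptional, and only the $\bS_2$ half is handed to the left resolution \eqref{eq:left F}, after dualizing so that \cref{lem:tensor1} applies verbatim. Your proposal instead pushes \emph{all} of $\bS$ through the left resolutions \eqref{eq:left E} and \eqref{eq:left F}, which forces you to check Pieri summands that the paper never produces.

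This is where the gap appears. You claim the extremal terms are handled by ``Pieri + \cref{vanish-IG39}.'' Two of them are not. First, for $\Ext^\bullet(\cU^{2,0,0},\cU^{0,0,-3}(-1))$ the Pieri decomposition contains $\cU^{-1,-1,-6}=\cU^{0,0,-5}(-1)$, which has $\lambda_2-\lambda_3=5>4$ and so fails condition 3 of \cref{vanish-IG39}; it is acyclic, but only via \cref{lem:dishomog-00-5} (equivalently \cref{lem:extremal computation}), which you do not cite. Second, and more seriously, for $\alpha\in\bS_1$ you need $\Ext^\bullet(\cU^\alpha,\cU^{1,0,-3}(-2))=0$; for $\alpha=(2,0,0)$ the Pieri decomposition of $\cU^{0,0,-2}\otimes\cU^{1,0,-3}(-2)$ contains $\cU^{-1,-2,-7}=\cU^{1,0,-5}(-2)$, which again has $\lambda_2-\lambda_3=5$ and fails \cref{vanish-IG39}. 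This bundle is in fact acyclic (one can check via the Koszul spectral sequence that the only offending summands on $\IGr(3,10)$ are $\widetilde\cU^{-1,-2,-8}$ and $\widetilde\cU^{-2,-2,-8}$, both killed by a repeated entry in $\gamma+\rho_{\Sp_{10}}$), but this is a fresh BBW computation not covered by \cref{lem:tensor1}, \cref{lem:dishomog-00-5}, or \cref{vanish-IG39}. Note also that \cref{lem:tensor1} with $\lambda=(2,0,0)$ gives $\H^\bullet(\cU^{1,0,-3}\otimes\cU^{2,0,0}(-2))=0$, which is $\Ext^\bullet(\cU^{0,0,-2},\cU^{1,0,-3}(-2))$ and hence handles $\alpha=(0,0,-2)\in\bS_2$, not $\alpha=(2,0,0)\in\bS_1$: the lemma is tuned to exactly the dualized form the paper uses. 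Either adopt the paper's right-resolution argument for $\bS_1$, or supply the missing BBW check for $\cU^{-1,-2,-7}$ and cite \cref{lem:dishomog-00-5} for $\cU^{-1,-1,-6}$.

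Everything after the orthogonality is fine: the triangle $C[2]\to\cU^{3,0,0}\to\cE[3]$ from \eqref{eq:right E}, together with $C\in\langle\bS_1\rangle$ and $\cE[3]\in\bS^\perp$, correctly identifies both mutations, and the argument for $\cF[2]$ is parallel. The deduction $\Ext^\bullet(\cE,\cF)=0$ from $\Ext^\bullet(\bS_1,\cF)=\Ext^\bullet(\cU^{3,0,0},\cF)=0$ and \eqref{eq:right E} is also the paper's own reasoning.
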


\begin{proof}
As $\cE \cong \cE^*(-1)$ by \eqref{property E}, we have:
\begin{equation*}
    \Ext^\bullet(\bS, \cE) = \Ext^\bullet(\bS, \cE^*(-1)) = \Ext^\bullet(\cE(1), \bS^*) = \Ext^\bullet(\cE(1), \bS),
\end{equation*}
where the last equality holds by the symmetry of $\bS$, \cf \eqref{eq:S-property}.
As $\cE(1)\in \Bsc(1)$ by \eqref{eq:E-F-B} and $\bS\subset \Bsc$, the last $\Ext^\bullet$-group vanishes by \cref{prop:Semi-O}, proving that $\cE\in \bS^\perp$.

We now focus on $\cF$. We observe that:
\begin{equation*}
    \Ext^\bullet(\bS_1, \cF) = 0, \quad \quad \Ext^\bullet(\cU^{3,0,0}, \cF) = 0,
\end{equation*}
because $\bB_2$ is an exceptional collection by \cref{cor:bases} and
all the terms in the right resolution  \eqref{eq:right F} of $\cF$ belong to $\langle \; \cU^{0,0,-1},\; \cU^{1,0,-1},\; \cU^{2,0,-1}\; \rangle$, as a consequence, we obtain that  \begin{equation*}
    \cF \in \langle\; \cU^{0,0,0},\; \cU^{1,0,0},\; \cU^{2,0,0},\; \cU^{3,0,0}\;\rangle ^\perp = \bS_1^\perp \, \cap \,  (\,\cU^{3,0,0}\,)^\perp.
\end{equation*} Finally, $\cE\in \langle\; \cU^{0,0,0},\; \cU^{1,0,0},\; \cU^{2,0,0},\; \cU^{3,0,0}\;\rangle $ by \eqref{eq:right E}, hence we obtain $\cF\in \cE^\perp$.

Notice that we have:
\begin{equation}\label{eq:intermezzo}
    \Ext^\bullet(\bS_2, \cF) \cong \Ext^\bullet(\cF^*, \bS_2^*).
\end{equation}
By \eqref{eq:left F}, we have that $\cF^*$ belongs to the category generated by $\bB_2(1),\, \bB_2(2)$ and $\cU^{3,0,-1}(2)$,
while $\bS_2^* \subset \langle\, \bB_2\, \rangle$. As a consequence, if $\Ext^\bullet(\cU^{3,0,-1}(2),\bS_2^*)=0$, the right side of \eqref{eq:intermezzo} vanishes by \cref{prop:Semi-O}. But this follows from  \cref{lem:tensor1}, proving that $\cF \in \bS^\perp$.

Finally, by \eqref{eq:right E} and \eqref{eq:right F}, we have:
\begin{equation*}
    \Cone(\, \cU^{3,0,0}\rightarrow \cE[3]\,)\in \langle \, \bS_1\cap \bB_2 \, \rangle , \quad \quad \Cone(\, \cU^{2,0,-1} \rightarrow \cF[2] \,)\in \langle \, \bS_2\cap \bB_2 \, \rangle.
\end{equation*}
As we proved that $\cE, \cF\in \bS^\perp$, the sequences \eqref{eq:right E} and \eqref{eq:right F} induce mutation triangles.
\end{proof}

\begin{proposition}\label{prop:ext-isom}
We have the following isomorphisms of $\Ext^\bullet$-groups:
\begin{equation*}
   \Ext^\bullet(\cF,\cE)= \mathbb{C}.
\end{equation*}
\end{proposition}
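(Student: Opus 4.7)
The plan is to reduce the computation using the mutation identity $\cF[2] = \mathbb{L}_{\bS}\, \cU^{2,0,-1}$ established in \cref{prop:mutate}, and then evaluate the reduction through the left resolution \eqref{eq:left E} of $\cE$. Concretely, the mutation triangle $N \to \cU^{2,0,-1} \to \cF[2]$ has $N \in \langle \bS \rangle$. Applying $\Ext^\bullet(-, \cE)$ and using $\cE \in \bS^\perp$ (also from \cref{prop:mutate}) to annihilate $\Ext^\bullet(N, \cE)$ and $\Ext^\bullet(N[1], \cE)$ yields
\[
\Ext^\bullet(\cF, \cE) \;\cong\; \Ext^\bullet(\cU^{2,0,-1}, \cE)[2],
\]
so it suffices to show $\Ext^\bullet(\cU^{2,0,-1}, \cE) = \mathbb{C}[-2]$.

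For this I would view $\cE$ via \eqref{eq:left E} as the length-four complex
\[
L^\bullet = [\,\cU^{0,0,-3}(-1) \to \wedge^8 V^* \otimes \cU^{0,0,-2}(-1) \to \wedge^7 V^* \otimes \cU^{0,0,-1}(-1) \to \wedge^6 V^* \otimes \cO(-1)\,]
\]
placed in degrees $-3,\dots,0$, and run the hypercohomology spectral sequence
\[
E_1^{p,q} = \Ext^q(\cU^{2,0,-1}, L^p) \;\Longrightarrow\; \Ext^{p+q}(\cU^{2,0,-1}, \cE).
\]
Each entry is a vector-space multiple of $\H^q(X, \cU^{1,0,-2} \otimes L^p_{\mathrm{bundle}})$, whose Schur summands are controlled by \cref{lem:sum}.

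For $p = -2, -1, 0$, enumerating the admissible weights $\gamma$ with $\alpha = (2,0,-1)$ and $\beta$ equal to the weight of $L^p_{\mathrm{bundle}}$ shows that every summand $\cU^\gamma$ satisfies $\gamma_3 < 0$, $\gamma_1 \geq -2$, and both consecutive jumps $\gamma_i - \gamma_{i+1} \leq 4$, hence is acyclic by \cref{vanish-IG39}; so $E_1^{p,\bullet} = 0$ for $p \geq -2$. For $p = -3$ the factor is $\cU^{0,0,-3}(-1)$ and \cref{lem:tensor1} gives
\[
\Ext^\bullet(\cU^{2,0,-1}, \cU^{0,0,-3}(-1)) = \H^\bullet(\cU^{1,0,-2} \otimes \cU^{0,0,-3}(-1)) = \mathbb{C}[-5].
\]
With $E_1^{-3,5} = \mathbb{C}$ the unique nonzero term, the spectral sequence degenerates at $E_1$, and $\Ext^\bullet(\cU^{2,0,-1}, \cE) = \mathbb{C}[-2]$. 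Combined with the reduction, this gives the desired $\Ext^\bullet(\cF, \cE) = \mathbb{C}$.

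The main bookkeeping is the enumeration at $p = -2$, where $\cU^{1,0,-2} \otimes \cU^{0,0,-2}(-1)$ contains up to six Schur summands; the a priori bound from \eqref{eq:jumps} only forces $\gamma_2 - \gamma_3 \leq 5$, so one has to verify case-by-case that each candidate weight compatible with $|\gamma| = -6$ and the coordinate constraints of \cref{lem:sum} in fact has $\gamma_2 - \gamma_3 \leq 4$. The cases $p = -1, 0$ each involve at most a handful of summands and are immediate.
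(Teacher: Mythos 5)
Your proof is correct, and it reaches the same pivotal computation $\H^\bullet(X,\,\cU^{1,0,-2}\otimes\cU^{0,0,-3}(-1))=\mathbb{C}[-5]$ via \cref{lem:tensor1}, but it arrives there by a different route than the paper. Your first reduction, $\Ext^\bullet(\cF,\cE)\cong\Ext^\bullet(\cU^{2,0,-1},\cE)[2]$, uses the mutation triangle from \cref{prop:mutate} together with $\cE\in\bS^\perp$; this is essentially equivalent to the paper's use of \eqref{eq:right F} and the containment $\Cone(\cU^{2,0,-1}[-2]\to\cF)\in\langle\cU^{0,0,-1},\cU^{1,0,-1}\rangle$, so the two proofs agree here up to packaging. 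Where you genuinely diverge is in handling the $\cE$-side: you replace $\cE$ by the full length-four complex $L^\bullet$ from \eqref{eq:left E} and compute the hypercohomology spectral sequence term by term, checking acyclicity of every Schur summand of $\cU^{1,0,-2}\otimes L^p_{\mathrm{bundle}}$ for $p=-2,-1,0$ via \cref{vanish-IG39}. The paper avoids these case-checks entirely by a structural observation: the cone of $\cE\to\cU^{0,0,-3}(-1)[3]$ lies in $\Bsc(-1)$, and the semiorthogonality $\Bsc(-1)\subset\Bsc^\perp$ from \cref{prop:Semi-O} (plus the dual observation $\Ext^\bullet(\Bsc^*,\cU^{0,0,-3}(-1))=0$ to kill cross-terms) immediately annihilates those contributions. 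Your argument costs a handful of extra Pieri/Littlewood--Richardson enumerations — the $p=-2$ case has five summands, and as you correctly flag, the generic bound $\gamma_2-\gamma_3\leq 5$ from \eqref{eq:jumps} is not tight enough, so one must list and check them — whereas the paper's argument is shorter but relies on the reader having internalized the semiorthogonality of the Lefschetz blocks. Both are valid; yours is more self-contained and elementary, the paper's more conceptual.
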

\begin{proof}
By \eqref{eq:left E} and \eqref{eq:right F}, we have
\begin{align*}
    \Cone(\,\cU^{2,0,-1}[-2] \rightarrow \cF \,)\in &\langle\, \cU^{0,0,-1},\,\cU^{1,0,-1}\, \rangle  \subset \Bsc\cap \Bsc^*,\\
    \Cone(\,\cE\rightarrow \cU^{0,0,-3}(-1)[3]\,)\in &\langle\, \cU^{0,0,-2}(-1),\,\cU^{0,0,-1}(-1),\,\cO(-1)\, \rangle \subset \Bsc(-1).
\end{align*}
Applying \cref{prop:Semi-O} and recalling that $\Ext^\bullet(\Bsc^*, \cU^{0,0,-3}(-1))\cong \Ext^\bullet(\cU^{3,0,0}(1), \Bsc)=0$, because $\cU^{3,0,0}(1)\in \Bsc(1)$. Hence we have:
\begin{equation*}
\Ext^\bullet(\cF,\cE)=\Ext^\bullet(\cU^{2,0,-1}[-2],\,\cU^{0,0,-3}(-1)[3])=\Ext^\bullet(\cU^{2,0,-1},\,\cU^{0,0,-3}(-1))[5].
\end{equation*}
Finally, 
\begin{equation*}
\Ext^\bullet(\cU^{2,0,-1},\,\cU^{0,0,-3}(-1))\cong \H^\bullet(X, \cU^{1,0,-2}\otimes \cU^{0,0,-3}(-1))\cong \mathbb{C}[-5]
\end{equation*} by \cref{lem:tensor1}. 
\end{proof}

Let $\phi\in \Ext^{\bullet}(\cF,\cE)=\mathbb{C}$ be the nonzero morphism, which is unique up to scalar. Then, we can define $\cH$ as 
\begin{equation}\label{def:H}    \cH=\Cone(\,\cF\xrightarrow \phi \cE\,).
\end{equation}
 By \cite[Lemma 5.1]{guseva2020derived}, the morphism  $\phi:\cF\rightarrow \cE$ lifts uniquely to a morphism of the right resolutions of $\cF$ and $\cE$ (\eqref{eq:right F} and \eqref{eq:right E}), finally defining the maps in the bicomplex \eqref{H-intro}; hence $\cH$ defined by \eqref{def:H} is isomorphic to the totalization of \eqref{H-intro}.
 
As an immediate consequence, we obtain the following.

\begin{proposition}\label{excep}
The objects $\cE,\cF$ and $\cH$ are exceptional. Moreover, $\cH=\mathbb{L}_{\cF}\cE$.
\end{proposition}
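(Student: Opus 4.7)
The plan is to extract exceptionality of $\cE$ and $\cF$ from the mutation identities in \cref{prop:mutate}, then deduce both claims about $\cH$ as formal consequences.

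For $\cE$, the identity $\cE[3] = \mathbb{L}_{\bS}\,\cU^{3,0,0}$ produces a distinguished triangle
\begin{equation*}
    P_E \longrightarrow \cU^{3,0,0} \longrightarrow \cE[3]
\end{equation*}
with $P_E \in \langle \bS_1 \cap \bB_2 \rangle = \langle \cU^{0,0,0},\, \cU^{1,0,0},\, \cU^{2,0,0} \rangle$, given by the truncation of \eqref{eq:right E} not involving $\cE$. I would first apply $\Hom(-, \cE[3])$: since $\cE \in \bS^\perp$ by \cref{prop:mutate}, the group $\Ext^\bullet(P_E, \cE[3])$ vanishes, yielding $\Ext^\bullet(\cE,\cE) \cong \Ext^\bullet(\cU^{3,0,0}, \cE[3])$. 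Then applying $\Hom(\cU^{3,0,0}, -)$ and using that $\bB_2$ is an exceptional collection with $\cU^{3,0,0}$ as its last element (\cref{cor:bases}), one gets $\Ext^\bullet(\cU^{3,0,0}, P_E) = 0$, forcing $\Ext^\bullet(\cU^{3,0,0}, \cE[3]) \cong \Ext^\bullet(\cU^{3,0,0}, \cU^{3,0,0}) = \mathbb{C}$. The identical argument applied to $\cF[2] = \mathbb{L}_\bS\,\cU^{2,0,-1}$, with $P_F \in \langle \cU^{0,0,-1}, \cU^{1,0,-1} \rangle \subset \langle \bS \rangle$ and the exceptional $\cU^{2,0,-1}$, gives $\Ext^\bullet(\cF,\cF) = \mathbb{C}$.

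For the identification $\cH = \mathbb{L}_\cF \cE$: by \cref{prop:ext-isom}, $\Ext^\bullet(\cF,\cE) = \mathbb{C}$ is concentrated in degree zero, so the explicit formula for left mutation through an exceptional object reads
\begin{equation*}
    \mathbb{L}_\cF \cE \;=\; \Cone\bigl(\Ext^\bullet(\cF,\cE) \otimes \cF \to \cE\bigr) \;=\; \Cone(\cF \xrightarrow{\phi} \cE) \;=\; \cH,
\end{equation*}
matching the definition \eqref{def:H}. Exceptionality of $\cH$ is then the standard principle that a left mutation through the first term of an exceptional pair produces again an exceptional pair: $(\cF,\cE)$ is an exceptional pair by the individual exceptionality just established together with $\Ext^\bullet(\cE,\cF) = 0$ from \cref{prop:mutate}, hence $(\cH, \cF) = (\mathbb{L}_\cF\cE,\, \cF)$ is exceptional, and in particular $\Ext^\bullet(\cH,\cH) = \mathbb{C}$.

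There is no substantive obstacle once \cref{prop:mutate} and \cref{prop:ext-isom} are in hand: exceptionality reduces to a two-step Ext-computation around a mutation triangle, and the mutation identification reduces to reading off the degree of a one-dimensional Ext-group. The only delicate bookkeeping point is tracking cohomological shifts, so that the cone \eqref{def:H} matches the mutation formula \emph{without} any extra twists; this works precisely because $\Ext^\bullet(\cF,\cE)$ sits in degree zero rather than in a shifted degree, a nontrivial output of the computation in \cref{prop:ext-isom}.
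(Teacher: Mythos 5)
Your proposal is correct and follows essentially the same route as the paper: identify $\cE$ and $\cF$ as (shifts of) mutations of exceptional objects through subcategories to which they are left-orthogonal, conclude they are exceptional, then read off $\cH = \mathbb{L}_\cF\cE$ from the explicit cone formula once $\Ext^\bullet(\cF,\cE)=\mathbb{C}$ is known to sit in degree zero. The only presentational difference is that the paper invokes the general principle that $\mathbb{L}_\Asc$ restricts to an equivalence ${}^\perp\Asc \simeq \Asc^\perp$ (hence preserves exceptionality), whereas you verify this by hand via the two long exact sequences coming from the mutation triangle — a slightly more explicit but equivalent argument.
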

\begin{proof}
We recall from \cref{subsec:except} that given an admissible subcategory $\Asc$, the mutation functor $\mathbb{L}_{\Asc}$ induces an equivalence between
${}^\perp \Asc$ and $\Asc^\perp$.
In particular, if $ G \in {}^\perp \Asc$ is  exceptional, then  $\mathbb{L}_{\Asc} G$ is exceptional as well. As $\bB_1$ and $\bB_2$ are exceptional collections by \cref{cor:bases}, the objects $\cU^{3,0,0}$ and $\cU^{2,0,-1}$ are exceptional and we have
\begin{equation*}
    \cU^{3,0,0}\in {}^\perp (\,\bS_1 \cap \bB_2\,) \quad \text{and} \quad \cU^{2,0,-1}\in {}^\perp (\,\bS_2\cap \bB_2\,),
\end{equation*} 
hence the objects $\cE$ and $\cF$ 
are exceptional by \cref{prop:mutate}.
Finally, as $\cE\in {}^\perp\cF$ by \cref{prop:mutate} and $\Ext^\bullet(\cF,\cE)\cong \mathbb{C}$ by \cref{prop:ext-isom}, we conclude that \begin{equation}
    \cH = \Cone{(\,\cF \xrightarrow \phi \cE\,)} = \mathbb{L}_{\cF} \cE
\end{equation} 
is exceptional.
\end{proof}

\begin{proposition}\label{prop:H is semiortho}
The object $\cH$ is right orthogonal to $\bB_1$, that is:
\begin{equation*}
    \Ext^\bullet(\bB_1, \cH)=0.
\end{equation*}
Hence, $\cH=\mathbb{L}_{\bB_1} \cU^{3,0,0}[3]$.
\end{proposition}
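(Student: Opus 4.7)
The plan is to prove $\Ext^\bullet(E, \cH) = 0$ for every $E \in \bB_1$, splitting into two cases depending on whether $E$ lies in $\bS = \bB_1 \setminus \{\cU^{2,0,-1}\}$ or $E = \cU^{2,0,-1}$. For the first case, apply $\Hom(E, -)$ to the defining triangle $\cF \xrightarrow{\phi} \cE \to \cH$ from \eqref{def:H}. By \cref{prop:mutate}, both $\Ext^\bullet(\bS, \cE)$ and $\Ext^\bullet(\bS, \cF)$ vanish, so the associated long exact sequence immediately yields $\Ext^\bullet(E, \cH) = 0$ for $E \in \bS$.

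The main obstacle is the case $E = \cU^{2,0,-1}$, since the direct argument above is inconclusive (both $\Ext^\bullet(\cU^{2,0,-1}, \cF)$ and $\Ext^\bullet(\cU^{2,0,-1}, \cE)$ are expected to be one-dimensional, and one would have to argue that the connecting map is an isomorphism). The trick is to swap perspectives: rather than applying $\Hom(\cU^{2,0,-1}, -)$ to the triangle defining $\cH$, apply $\Hom(-, \cH)$ to a mutation triangle for $\cU^{2,0,-1}$. Concretely, the identification $\cF[2] = \mathbb{L}_{\bS_2 \cap \bB_2} \cU^{2,0,-1}$ from \cref{prop:mutate} produces a triangle $F'' \to \cU^{2,0,-1} \to \cF[2]$ with $F'' \in \langle \bS_2 \cap \bB_2 \rangle \subset \bS$. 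Applying $\Hom(-, \cH)$, the desired vanishing reduces to that of $\Ext^\bullet(F'', \cH)$ and $\Ext^\bullet(\cF[2], \cH)$: the first is covered by the case $\bS$ just handled; the second follows from $\cH = \mathbb{L}_\cF \cE \in \cF^\perp$, already established in \cref{excep}.

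For the final identification $\cH = \mathbb{L}_{\bB_1} \cU^{3,0,0}[3]$, the plan is to decompose the mutation along the ordering of $\bB_1$ as $\mathbb{L}_{\bB_1} = \mathbb{L}_{\bS_2} \circ \mathbb{L}_{\cU^{2,0,-1}} \circ \mathbb{L}_{\bS_1 \cap \bB_2}$. The innermost factor applied to $\cU^{3,0,0}$ yields $\cE$ up to a shift by \cref{prop:mutate}. The middle factor invokes $\Ext^\bullet(\cU^{2,0,-1}, \cE)$, which collapses to a single one-dimensional contribution via a spectral sequence built from the left staircase resolution \eqref{eq:left E}, combined with \cref{vanish-IG39} and \cref{lem:tensor1}. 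The outermost $\mathbb{L}_{\bS_2}$ fixes $\cE$ (since $\cE \in \bS^\perp$) and sends $\cU^{2,0,-1}$ to $\cF$ up to a shift, using $\mathbb{L}_{\bS_2 \cap \bB_2} \cU^{2,0,-1} = \cF[2]$ together with $\cF \in (\cU^{0,0,-2})^\perp$. Chasing the resulting cone through these three mutations recovers the claimed identification, matching the defining triangle $\cH = \Cone(\cF \to \cE)$.
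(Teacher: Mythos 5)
Your argument for the orthogonality $\Ext^\bullet(\bB_1,\cH)=0$ coincides with the paper's.  For $E\in\bS$ you use the defining triangle $\cF\to\cE\to\cH$ together with $\cE,\cF\in\bS^\perp$ from \cref{prop:mutate}; for $E=\cU^{2,0,-1}$ your ``swap of perspectives'' via the triangle $F''\to\cU^{2,0,-1}\to\cF[2]$ with $F''\in\langle\bS_2\cap\bB_2\rangle\subset\langle\bS\rangle$ is precisely the paper's observation that $\cU^{2,0,-1}\in\langle\cF,\bS\rangle$ by \eqref{eq:right F}, combined with $\cH\in\cF^\perp\cap\bS^\perp$.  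Your remark that the direct attack is inconclusive because $\Ext^\bullet(\cU^{2,0,-1},\cF)$ and $\Ext^\bullet(\cU^{2,0,-1},\cE)$ are each one-dimensional is a correct diagnosis of why the reformulation is needed.

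Where you diverge is the final identification of $\cH$ with the mutation of $\cU^{3,0,0}$.  You decompose $\mathbb{L}_{\bB_1}=\mathbb{L}_{\bS_2}\circ\mathbb{L}_{\cU^{2,0,-1}}\circ\mathbb{L}_{\bS_1\cap\bB_2}$ and evaluate each factor, which forces you to compute $\Ext^\bullet(\cU^{2,0,-1},\cE)$ via a spectral sequence from \eqref{eq:left E} using \cref{vanish-IG39} and \cref{lem:tensor1}.  The paper never needs this: once $\cH\in\bB_1^\perp$ is known, the triangle $\cF\to\cE\to\cH$ with $\cF\in\langle\bB_1\rangle$ already exhibits $\cH=\mathbb{L}_{\bB_1}\cE$, and since $\cE$ is by \cref{prop:mutate} the mutation of $\cU^{3,0,0}$ through $\bS\subset\bB_1$, the chain of mutations composes formally.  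Beyond being more work, your three-step chase leaves an unaddressed point at the end: after tracing the three cones you arrive at some morphism $\cF[3]\to\cE[3]$, and you would still need to argue that this morphism is nonzero (otherwise the result would be $\cF[4]\oplus\cE[3]$, not $\cH[3]$).  You assert this matches the defining triangle without justifying it; the paper's route sidesteps the issue entirely.  The underlying cohomology computations you invoke are correct, so this is an over-complication with a small loose end rather than a fatal gap, but the formal mutation argument is both shorter and airtight.
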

\begin{proof}
We have $\cH\in \bS^\perp$, because $\cE,\cF\in \bS^\perp$ by \cref{prop:mutate}. By \cref{excep}, we have $\cH\in \cF^\perp$ . But $\cU^{2,0,-1}\in \langle \, \cF, \bS \, \rangle$ by \eqref{eq:right F}, hence $\cH\in ( \, \cU^{2,0,-1} \, ) ^\perp$. Finally,  as we have $\bB_1=\bS \, \cup \, \{ \, \cU^{2,0,-1} \, \}$, we deduce that $\cH \in \bB_1^\perp$. This also proves $\cH = \mathbb{L}_{\bB_1} \cE$. By \cref{prop:mutate} we obtain the second claim.
\end{proof}

Finally we can state the main result of the section. Recall that $\bB = \{\,\cH \, \} \cup  \bB_1$. 
\begin{theorem}\label{thm:main}
The bounded derived category of coherent sheaves on $\IGr(3,9)$ admits a rectangular Lefschetz exceptional collection of length $8$ composed by $\Sp_9$-equivariant objects given by:
\begin{equation*}
    \langle \,  \bB,\,\bB(1),\,\bB(2),\,\bB(3),\,\bB(4),\,\bB(5),\,\bB(6)\, \rangle.
\end{equation*}
\end{theorem}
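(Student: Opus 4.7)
The plan is to apply the Lefschetz criterion \cref{lem:criterion-lef} to the collection $\bB = \{\cH\}\cup\bB_1$, synthesizing the material established in \cref{sec:twocoll} and \cref{sec:H-props}. Since the index of $X=\IGr(3,9)$ is $w=7$ by \cref{prop:invariants-easy}, the criterion demands two things: that $\bB$ is an exceptional collection, and that $\Ext^\bullet(E_j(t),E_i)=0$ for all $1\le i\le j\le 8$ and all $1\le t\le 6$. Both conditions are essentially immediate from what has already been proved.

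For the first condition, note that each individual object is exceptional: the seven bundles in $\bB_1$ by \cref{cor:bases}, and $\cH$ by \cref{excep}. The semiorthogonality internal to $\bB_1$ is again \cref{cor:bases}, and the remaining semiorthogonalities, namely $\Ext^\bullet(\cU^{\alpha},\cH)=0$ for every $\cU^{\alpha}\in\bB_1$, are exactly the content of \cref{prop:H is semiortho}. Since $\cH$ is placed first in $\bB$, these are precisely the relations required to make $\bB$ an ordered exceptional collection.

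For the Lefschetz condition, I would invoke \cref{prop:Semi-O} directly: the subcategory $\Bsc$ generated by $\bB$ satisfies $\Ext^\bullet(\Bsc(t),\Bsc)=0$ for $t=1,\dots,6$. In particular, for any two elements $E_i,E_j\in\bB$ and any such $t$, one has $\Ext^\bullet(E_j(t),E_i)=0$, so the hypotheses of \cref{lem:criterion-lef} are verified and $\bB,\bB(1),\dots,\bB(6)$ is a rectangular Lefschetz exceptional collection.

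It remains only to observe $\Sp_9$-equivariance. The bundles $\cU^{\lambda}\in\bB_1$ are $\GL(V)$-equivariant on $\Gr(3,V)$, hence their restrictions to $X$ are $\Sp_9$-equivariant. The object $\cH$ is the convolution of the bicomplex \eqref{H-intro}, whose rows are stupid truncations of staircase complexes (whose differentials are canonical $\GL(V)$-equivariant maps by \cref{thm:staircase}) and whose vertical maps are induced by the $\Sp_9$-invariant form $\psi$; equivalently, $\cH=\mathrm{Cone}(\phi)$ for a morphism $\phi\colon\cF\to\cE$ which is unique up to scalar by \cref{prop:ext-isom}, so $\phi$ and hence $\cH$ is $\Sp_9$-equivariant. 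There is no genuine obstacle remaining at this stage: the theorem is a bookkeeping assembly of the results of the preceding sections, with \cref{prop:Semi-O} doing the heaviest lifting.
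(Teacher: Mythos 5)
Your derivation of the exceptionality and the Lefschetz property is exactly the paper's: both assemble \cref{cor:bases}, \cref{prop:H is semiortho}, \cref{excep}, and \cref{prop:Semi-O}, then apply \cref{lem:criterion-lef}. Where you genuinely diverge is the proof of $\Sp_9$-equivariance. The paper does not look at the construction of $\cH$ at all: it cites \cite[Lemma~2.2.(1)]{polishchuk2011k}, which asserts that \emph{every} exceptional object on $\IGr(k,2n+1)$ is $\Sp_{2n+1}$-equivariant, and then spends a paragraph verifying that this lemma (stated for reductive groups) still applies to the nonreductive group $\Sp_{2n+1}=(\mathbb{C}^*\times\Sp_{2n})\rtimes U$ by treating the Levi subgroup and the unipotent radical $U\cong\mathbb{C}^{2n}$ separately. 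You instead read equivariance off the explicit construction, which is more self-contained but more ad hoc; the paper's route has the advantage of being agnostic about how $\cH$ was built. A small caveat on your second justification: "$\phi$ is unique up to scalar, so $\phi$ is $\Sp_9$-equivariant" is a little too quick, since uniqueness up to scalar only says that $\mathbb{C}\cdot\phi\subset\Hom(\cF,\cE)$ is a one-dimensional subrepresentation; a priori $\phi$ is only semi-invariant for some character of $\Sp_9$, and $\Sp_9$ does have nontrivial characters through the $\mathbb{C}^*$ factor of its Levi. Your first justification --- that the vertical differentials of the bicomplex \eqref{H-intro} are induced by the $\Sp_9$-invariant form $\psi$, so the relevant character is trivial --- is what actually closes the argument, and it does.
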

\begin{proof}
Since $\{\,\cH \, \} \,\cup\,  \bB_1 \subset \Bsc$, the blocks are semiorthogonal by \cref{prop:Semi-O}. On the other hand, semiorthogonality within a block follows from \cref{cor:bases} and \cref{prop:H is semiortho}. The object $\cH$ itself is exceptional by \cref{excep}. Applying \cref{lem:criterion-lef}, we deduce the exceptionality claim.

It is immediate to see that all the elements in $\bB_1$ admit an $\Sp_9$-equivariant structure. To show that $\cH$ admits one as well, we recall \cite[Lemma 2.2.(1)]{polishchuk2011k}, which proves that any exceptional object in $\IGr(k,2n+1)$ is $\Sp_{2n+1}$-equivariant. 

Recall that $\Sp_{2n+1}=(\mathbb{C}^*\times \Sp_{2n})\rtimes U$, by \cite[\textsection 3]{mihai2007odd}, where $\mathbb{C}^*\times \Sp_{2n}$ is the Levi subgroup,  and $U\cong \mathbb{C}^{2n}$  is the unipotent radical. As the Levi subgroup is reductive, we can apply \cite[Lemma 2.2.(2)]{polishchuk2011k} to $\mathbb{C}^*\times \Sp_{2n}$. Since $U\cong \mathbb{C}^{2n}$, we see that the hypothesis of \cite[Lemma 2.2.(1)]{polishchuk2011k} hold for $\Sp_{2n+1}$ as well. In particular, $\cH$ is $\Sp_9$-equivariant. 
\end{proof}

\begin{remark}\label{rem:vb}
    It is possible to show that $\cH$ is a vector bundle. To do so, we need to prove that the morphism $\phi$ defined by \cref{prop:ext-isom} is surjective. Approaching the problem in $\IGr(3,9)$ is hard, as $\IGr(3,9)$ is not homogeneous and $\Sp_9$ is not semisimple. 
    
    A lighter solution is to consider the embedding of $X$ in $\IGr(3,10)$. The staircase complexes of $\wcU^{3,0,0}$ and $\wcU^{2,0,-1}$ on $\IGr(3,10)$, appropriately truncated, provide lifts of $\cE$ and $\cF$. We can prove that the corresponding map is surjective, similarly to \cite[Lemma 5.3]{guseva2020derived}, then we can obtain the result on the original morphism applying \cref{prop:split-staircase}.
\end{remark}

\section{Fullness}\label{sec:full}
In \cite[\textsection~4-\textsection~5]{novikov}, it was developed a procedure to show the fullness of an exceptional collection on the even isotropic Grassmannian. We adapt it to the case of an odd isotropic Grassmannian and we summarize it in \cref{sec:strat}. This reduces the question of fullness to a combinatorial statement (\cf \cref{prop:final staircase}).

The procedure consists of consequent applications of steps of two different kinds to produce more and more objects in the category $\Dsc$ generated by the exceptional collection:
\begin{enumerate}
    \item using staircase complexes, already discussed in \cref{thm:staircase};
    \item using so-called "symplectic bundle relations", see \cref{sec:symp}.
\end{enumerate}
When we have produced sufficiently many objects, we can conclude that $\Dsc=\DX$ by \cref{prop:final staircase}. We give more details on this strategy in \cref{sec:strat} below and implement it to prove the fullness in \cref{sec:algorithmic} in nine steps.
From now on, we represent $\cU^{i,0,\dots,0,-j}$ with the shorthand notation
\begin{equation*}
    \cU^{i,-j}.
\end{equation*}

\subsection{Symplectic bundle relations}\label{sec:symp}

In this section we work with any odd isotropic Grassmannian $\IGr(k,V)$, with $\dimsf  V =2n + 1$. 

Consider the embedding $j:\IGr(k,V)\rightarrow \IGr(k,\widetilde{V})$ where $\wV$ is a symplectic vector space with $\dim \wV=2n+2$. Recall the notation fixed in \cref{subsec:odd-grass}. On the even isotropic Grassmannian $\IGr(k,\widetilde{V})$, the symplectic bundle $\widetilde{\cS}=\widetilde{\cU}^\perp/\, \widetilde{\cU}$ is the bundle of rank $2(n+1-k)$ defined as the cohomology in degree $0$ of the complex \eqref{eq:symplectic}. We consider the restriction to $\IGr(k,V)$ of the sequence \eqref{eq:symplectic}, that is:
\begin{equation}\label{eq:res}
    0\rightarrow\cU\rightarrow \widetilde{V}\otimes \cO\rightarrow \cU^* \rightarrow 0,
\end{equation}
which has only cohomology in degree $0$, which is isomorphic to $j^* \widetilde{\cS}$.

\begin{proposition}\label{prop:complex S}
    For any $p\geq 0$, the object $\wedge^p j^* \widetilde{\cS} \in \mathbf{D}^{\mathrm{b}}(\IGr(k,V))$ is quasi-isomorphic to a complex with entries given by direct sums, possibly with multiplicities, of $\cU^{i,-j}$ for $i,j\geq 0$ and $i+j\leq p$. Moreover, if $i+j=p$, the bundle $\cU^{i,-j}$ appears exactly once among the direct summands of the terms of this complex.
\end{proposition}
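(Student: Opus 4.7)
Proof plan:

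The plan is to resolve $\wedge^p j^*\widetilde{\cS}$ in two stages, using the two short exact sequences on $X = \IGr(k,V)$ obtained by restricting the symplectic sequences on $\IGr(k,\widetilde V)$. Setting $\cQ := j^*\widetilde{\cU}^\perp$, the restriction of \eqref{eq:S-def} and \eqref{eq:symplectic} yields two short exact sequences of vector bundles on $X$:
\begin{equation*}
0 \to \cU \to \cQ \to j^*\widetilde{\cS} \to 0, \qquad 0 \to \cQ \to \widetilde V \otimes \cO \to \cU^* \to 0,
\end{equation*}
whose exactness is preserved by $j^*$ because all entries are locally free.

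First I would apply the Koszul-type resolution for exterior powers attached to the first sequence (see, e.g., \cite[Ch.~2]{weyman2003cohomology}), namely the exact complex
\begin{equation*}
0 \to S^p \cU \to S^{p-1}\cU \otimes \cQ \to \cdots \to \cU \otimes \wedge^{p-1}\cQ \to \wedge^p \cQ \to \wedge^p j^*\widetilde{\cS} \to 0,
\end{equation*}
which exhibits $\wedge^p j^*\widetilde{\cS}$ as quasi-isomorphic to a complex with entries $S^{p-q}\cU \otimes \wedge^q \cQ$ for $0 \leq q \leq p$. Next I would apply the Koszul complex associated with the second sequence (the evident analogue of \eqref{eq:Koszul-Symm}), which shows that each $\wedge^q \cQ$ is quasi-isomorphic to the truncated complex with entries $\wedge^{q-r}\widetilde V \otimes S^r \cU^*$ for $0 \leq r \leq q$. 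Combining these substitutions, $\wedge^p j^*\widetilde{\cS}$ becomes quasi-isomorphic to the total complex of a double complex whose entries are
\begin{equation*}
\wedge^{q-r}\widetilde V \otimes \bigl(S^{p-q}\cU \otimes S^r\cU^*\bigr), \qquad 0 \leq r \leq q \leq p,
\end{equation*}
where $\wedge^{q-r}\widetilde V$ is a constant multiplicity space.

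To finish, I would decompose each factor $S^{p-q}\cU \otimes S^r\cU^*$ via Pieri's formula (\cref{prop:pieri}). A direct analysis shows that every summand is of the form $\cU^{b,0,\ldots,0,-c}$ with $b,c\geq 0$ and $b + c \leq (p-q) + r \leq p$, establishing the first claim. For the uniqueness statement, the saturation $b + c = p$ forces $r = q$ (so $\wedge^{q-r}\widetilde V = \mathbb{C}$) and the Pieri summand must be the extremal one $\cU^{r,0,\ldots,0,-(p-q)}$; tracing back, each $\cU^{i,0,\ldots,0,-j}$ with $i+j = p$ arises from the unique position $(q,r) = (i,i)$ and with multiplicity one, since Pieri's rule is multiplicity-free. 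I expect the main obstacle to be precisely this last bookkeeping — tracking the Pieri multiplicities across the double complex and verifying that the extremal contributions do not collide — but it becomes routine once organised around the invariant $b + c$.
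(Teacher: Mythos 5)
Your proof is correct and is essentially the same as the paper's: both reduce $\wedge^p j^*\widetilde{\cS}$ to a complex whose terms are $S^{p_1}\cU^* \otimes \wedge^{p_2}\widetilde{V} \otimes S^{p_3}\cU$ with $p_1+p_2+p_3=p$, decompose via Pieri, and observe that the extremal $\cU^{i,-j}$ with $i+j=p$ can only come from $p_2=0$ and the top Pieri summand. The only difference is bookkeeping: the paper takes $\wedge^p$ of the three-term complex \eqref{eq:res} in a single step (using that exterior powers of bounded complexes of vector bundles preserve quasi-isomorphisms and $\wedge^p(\cF[\pm1])\cong S^p\cF[\pm p]$), whereas you factor through $\cQ=j^*\widetilde{\cU}^\perp$ and chain two Koszul exact sequences into a double complex — a legitimate alternative that produces the same entries.
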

\begin{proof}
The complex obtained from the sequence \eqref{eq:res}, which we will denote as $\cC^\bullet$ is quasi-isomorphic to $j^*\widetilde{\cS}$.  Taking the wedge power of a complex preserves quasi-isomorphisms, hence $\wedge^p \cC^\bullet \cong \wedge^p j^* \widetilde{\cS}$ in $\mathbf{D}^{\mathrm{b}}(\IGr(k,V))$.
Recall that the monoidal structure of $\mathbf{D}^{\mathrm{b}}(\IGr(k,V))$ is defined in such a way that $\wedge^p (\cF[\pm 1])\cong (S^p \cF)[\pm p]$ for every vector bundle $\cF$.

The terms of the complex $\wedge^p \cC^\bullet$
are given by direct sums of:
\begin{equation}\label{eq:factors}
    \wedge^{p_1}(\cU^*[-1])\otimes \wedge^{p_2}(\widetilde{V}\otimes \cO) \otimes \wedge^{p_3} (\cU[1])
    = \cU^{p_1,0}[-p_1]\otimes \wedge^{p_2}(\widetilde{V}\otimes \cO) \otimes \cU^{0,-p_3}[p_3]
\end{equation}
with  
\begin{equation*}
    p_1+p_2+p_3=p, \quad \quad p_1, p_2, p_3 \geq 0
\end{equation*} 
By Pieri's formula, we can decompose the factors above as:
\begin{equation*}
    \cU^{p_1,0}\otimes \cU^{0,-p_3} = \bigoplus_{0\leq t \leq \min(p_1,p_3)} \cU^{p_1-t, t-p_3}.
\end{equation*}  
We now determine the multiplicity of $\cU^{i,-j}$ with $i+j=p$. If $\cU^{i,-j}$ comes from $\cU^{p_1,0}\otimes \cU^{0,-p_3}$, then   $i+j = (p_1-t)+(p_3-t)\leq p_1+p_3$. Then we obtain
$p_2=0, i=p_1, j=p_3$  and $t=0$.
This shows that $\cU^{i,-j}$ appears as a direct summand of a single term of $\wedge^p \cC^\bullet$. Moreover, its multiplicity is $\wedge^{p_2}\widetilde{V}\cong \mathbb{C}$, proving the claim.
\end{proof}

Restricting the symplectic isomorphism on $\IGr(k,\widetilde{V})$ we obtain $j^*\widetilde{\cS} \cong j^*\widetilde{\cS}^*$,
which induces:
\begin{equation}\label{eq:compare filters}
     \wedge^p j^*\widetilde{\cS} \cong \wedge^{2(n+1-k)-p}j^*\widetilde{\cS}^* \cong \wedge^{2(n+1-k)-p}j^*\widetilde{\cS}.
\end{equation}
Applying \cref{prop:complex S} to $\wedge^p j^*\widetilde{\cS}$ and $\wedge^{2(n+1-k)-p}j^*\widetilde{\cS}$, we obtain the following key proposition.

\begin{proposition}\label{prop:rule-S}
Let $\Dsc\subseteq\mathbf{D}^{\mathrm{b}}(\IGr(k,V))$ be a triangulated subcategory and let  $l\in \mathbb{Z}$. Let $i+j=p$ with $p> n+1-k$. If  $\cU^{i',-j'}(l)\in \Dsc$ for every $i',j'\geq 0$, with $i'+j'\leq p$ and $(i,j)\neq (i',j')$, then $\cU^{i,-j}(l)\in \Dsc$.
\end{proposition}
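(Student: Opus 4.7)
My plan is to exploit the symplectic self-duality $\wedge^p j^*\widetilde{\cS} \cong \wedge^{2(n+1-k)-p} j^*\widetilde{\cS}$ from \eqref{eq:compare filters}, together with \cref{prop:complex S}, to produce two rather different complexes representing the same object $\wedge^p j^*\widetilde{\cS}(l)$. Setting $q := 2(n+1-k)-p$, the hypothesis $p > n+1-k$ guarantees $q < p$, and this asymmetry between the two descriptions is precisely what drives the argument.

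The first step is to prove that $\wedge^p j^*\widetilde{\cS}(l) \in \Dsc$. Applying \cref{prop:complex S} to $\wedge^q j^*\widetilde{\cS}$ and twisting by $\cO(l)$ yields, via the symplectic isomorphism, a complex quasi-isomorphic to $\wedge^p j^*\widetilde{\cS}(l)$ whose entries are direct sums of $\cU^{i',-j'}(l)$ with $i'+j' \leq q < p$. Each such index satisfies both $i'+j' \leq p$ and $(i',j')\neq (i,j)$ (the latter because $i'+j' < i+j$), so every entry lies in $\Dsc$ by hypothesis. As $\Dsc$ is triangulated, iterated cones of these entries place $\wedge^p j^*\widetilde{\cS}(l)$ in $\Dsc$.

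The second step is to apply \cref{prop:complex S} directly with index $p$, obtaining a second complex $T^\bullet$ representing $\wedge^p j^*\widetilde{\cS}(l)$, whose entries are direct sums of $\cU^{i',-j'}(l)$ with $i'+j' \leq p$, and in which $\cU^{i,-j}(l)$ appears as a direct summand of exactly one entry, say $T^{q_0}$. Every other summand $\cU^{i',-j'}(l)$ appearing anywhere in $T^\bullet$ satisfies $(i',j')\neq (i,j)$ with $i'+j'\leq p$, hence belongs to $\Dsc$ by hypothesis.

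The final step extracts $\cU^{i,-j}(l)$. Passing to the Verdier quotient by the thick closure of $\Dsc$, the complex $T^\bullet$ becomes quasi-isomorphic to $\cU^{i,-j}(l)[-q_0]$ (since all other summands vanish), while its total object $\wedge^p j^*\widetilde{\cS}(l)$ is zero there by Step 1. Hence $\cU^{i,-j}(l)$ vanishes in the quotient, i.e., it belongs to $\Dsc$. I expect the main obstacle to be exactly this bookkeeping: stripping away the $\Dsc$-summands from each entry of $T^\bullet$ has to be done compatibly with the differentials. The Verdier-quotient formulation above is clean, but it requires $\Dsc$ to be thick, which will hold in the intended application since $\Dsc$ is generated by an exceptional collection (hence admissible). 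Alternatively one may argue by induction on the length of $T^\bullet$, peeling off each $\Dsc$-summand via a cone triangle and relying on the triangulated structure.
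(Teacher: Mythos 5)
Your proof follows the same strategy as the paper: use $q = 2(n+1-k)-p < p$ and the symplectic isomorphism \eqref{eq:compare filters} to place the total object $\wedge^p j^*\widetilde{\cS}(l)$ in $\Dsc$ via the short complex, then apply \cref{prop:complex S} at level $p$ to isolate the single remaining summand $\cU^{i,-j}(l)$.

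One remark on Step 3. The Verdier-quotient formulation you give requires $\Dsc$ to be thick, which the proposition does not assume (it only asks for a triangulated subcategory). Your fallback --- peeling off via cone triangles --- is in fact what the paper does implicitly and it needs no thickness at all: from the stupid filtration of $T^\bullet$, all brutal truncations $\tau_{\geq r}T^\bullet$ land in $\Dsc$ because the total convolution and all $T^r$ with $r\neq q_0$ are already in $\Dsc$, so the triangle $\tau_{\geq q_0+1}T^\bullet \to \tau_{\geq q_0}T^\bullet \to T^{q_0}[-q_0]$ forces $T^{q_0}\in\Dsc$; then the split triangle $T'\to T^{q_0}\to \cU^{i,-j}(l)$ (with $T'$ the complementary $\Dsc$-summand) puts $\cU^{i,-j}(l)\in\Dsc$ directly as a cone, no direct-summand closure needed. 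So your alternative route is the correct one and should be promoted to the main argument; the Verdier version is a detour that introduces an unnecessary hypothesis, even though it is harmless for the paper's application where $\Dsc$ is admissible.
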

\begin{proof}
     Assume $l=0$. By hypothesis, we have $2(n+1-k) - p < n+1-k $. We first apply \cref{prop:complex S} to show that $\wedge^{2(n+1-k) - p}j^*{\widetilde{\cS}}\in \Dsc$. By \eqref{eq:compare filters}, that is equivalent to $\wedge^p j^*{\widetilde{\cS}}\in \Dsc$. Applying again \cref{prop:complex S}, $\wedge^p j^*{\widetilde{\cS}}$ admits a filtration with factors given by direct sums of $\cU^{i,-j}$ and $\cU^{i',-j'}$ as above. 
     
     By hypothesis, all the factors in the filtration belong to $\Dsc$ except possibly $\cU^{i,-j}$, which appears exactly once as a factor, hence $\cU^{i,-j}\in \Dsc$ as well. This proves the claim.  
     
     If $l\neq 0$, we apply the proven result to a twist of $\Dsc$.
\end{proof}

\subsection{Strategy}\label{sec:strat}
We show here how to reduce the proof of the fullness of the Lefschetz collection $\bB$ to an algorithmic question. Let us consider the case of $X=\IGr(3,2n+1)$. Consider the set of bundles 
\begin{equation}\label{def:T}
    \bT = \{\cU^{i,-j}\mid 0\leq i,j;\; i+j \leq 2n-2\}
\end{equation}

\begin{proposition}\label{prop:span class}
Let $\Dsc\subseteq \mathbf{D}^{\mathrm{b}}(X)$ be an admissible subcategory. If the set $\bT(-l)$ is contained in $\Dsc$ for every $l\geq 0$ then $\Dsc=\mathbf{D}^{\mathrm{b}}(X)$.
\end{proposition}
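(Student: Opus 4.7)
The plan is to reduce the generation statement on $X=\IGr(3,V)$ to Kapranov's full exceptional collection on the ambient Grassmannian $Y=\Gr(3,V)$, exploiting the closed embedding $i\colon X\hookrightarrow Y$ which realizes $X$ as the zero locus of the regular section $\psi\in\H^0(Y,\wedge^2\cU^*)$. Kapranov's theorem produces a full exceptional collection of $\mathbf{D}^{\mathrm{b}}(Y)$ consisting of the bundles $\cU^\lambda$ with $0\le\lambda_3\le\lambda_2\le\lambda_1\le 2n-2$. Tensoring with $\cO(-(2n-2))$ yields the equally full exceptional collection
\[
\cK \;:=\; \{\cU^\mu \ :\ -(2n-2)\le\mu_3\le\mu_2\le\mu_1\le 0\}\subset\mathbf{D}^{\mathrm{b}}(Y).
\]

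The first step is a combinatorial matching: every bundle $\cU^\mu\in\cK$ belongs to $\bT(-l)$ for some $l\ge 0$, and is therefore contained in $\Dsc$ by hypothesis. Indeed, given such a $\mu$, set $l=-\mu_2\ge 0$, $i=\mu_1-\mu_2\ge 0$, $j=\mu_2-\mu_3\ge 0$; then $\mu=(i,0,-j)+(-l,-l,-l)$, so $\cU^{\mu}=\cU^{i,-j}(-l)$, and the Kapranov box constraint $\mu_1-\mu_3\le 2n-2$ is precisely $i+j\le 2n-2$, which guarantees $\cU^{i,-j}\in\bT$. The second step is a reduction to a right-orthogonal vanishing: since $\Dsc$ is admissible there is a semiorthogonal decomposition $\DX=\langle\Dsc^\perp,\Dsc\rangle$, so it suffices to show $\Dsc^\perp=0$.

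Finally, take any $E\in\Dsc^\perp$. For every $\mu\in\cK$, the bundle $\cU^\mu\in\Dsc$ on $X$ coincides with $i^*\cU^\mu$ where the bundle on the right is viewed on $Y$, so adjunction $i^*\dashv i_*$ gives
\[
\Ext^\bullet_Y(\cU^\mu,\, i_*E)\;=\;\Ext^\bullet_X(\cU^\mu,\, E)\;=\;0.
\]
Since $\cK$ generates $\mathbf{D}^{\mathrm{b}}(Y)$, this forces $i_*E=0$; because $i$ is a closed embedding the functor $i_*$ is fully faithful, hence $E=0$, as required. I do not foresee a substantial obstacle: the substance is provided by Kapranov's theorem together with adjunction. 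The only delicate point is to align the hypothesis "$\bT(-l)\subseteq\Dsc$ for all $l\ge 0$" with the direction of the twist, which is exactly why we work with the negatively-twisted collection $\cK$ rather than Kapranov's original one.
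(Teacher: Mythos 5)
Your proof is correct, but it takes a genuinely different route from the paper's. The paper's argument is essentially a one-liner: since $\Dsc$ is admissible one has $\mathbf{D}^{\mathrm{b}}(X)=\langle\Dsc^\perp,\Dsc\rangle$, and the hypothesis immediately gives $\cO(-l)\in\bT(-l)\subset\Dsc$ for every $l\geq 0$; since $\{\cO(-l)\}_{l\geq 0}$ is a spanning class for $\mathbf{D}^{\mathrm{b}}(X)$ (by \cite[Corollary~3.19]{huybrechts2006fourier}), this already forces $\Dsc^\perp=0$. You instead push forward along the closed embedding $i\colon X\hookrightarrow\Gr(3,V)$ and detect vanishing of $i_*E$ against Kapranov's full collection, using the adjunction $Li^*\dashv Ri_*$ and the faithfulness of $i_*$. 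Both are legitimate. What the paper's approach buys is brevity and minimal input — it uses only one bundle $\cO(-l)\in\bT(-l)$ from each twist and a standard spanning-class fact on $X$ itself. What yours buys is a kind of robustness: it shows that finitely many twists suffice (namely $l=0,\dots,2n-2$, since those already contain the negatively twisted Kapranov box), and it illustrates the inheritance of generation from the ambient Grassmannian, which fits the spirit of the rest of the paper (which constantly passes through $\IGr(3,10)$). One small remark: you invoke the regular-section description of $X\subset\Gr(3,V)$, but your argument never uses it — the closed embedding and the identification $Li^*\cU^\mu=\cU^\mu$ are all that is needed.
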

\begin{proof}
Since $\Dsc$ is admissible, consider the decomposition $\DX= \langle \Dsc^\perp, \Dsc \rangle$. Recall that $\cO(-l)$ for $l\geq 0$ is a spanning class (\cf \cite[Corollary~3.19]{huybrechts2006fourier}). This implies $ \Dsc^\perp = 0$ as $\cO(-l)\in \bT(-l)\subset \Dsc$, proving $\Dsc=\bD^{\mathrm{b}}(X)$.
\end{proof}

We rephrase here in a more compact form how to apply \cref{thm:staircase}.  
For each triple of integers $(a,b,c)$, with $a,c\geq -1$, $b\geq 0$ and $a+b+c\leq 2n-2$, we introduce a set of $a+b+c+2$ bundles named $\bS^{a,c}_b$ and defined as:
\begin{equation*}
    \bS^{a,c}_b :=\begin{cases*}
    \,\cU^{i,-b}  & \textrm{for} \quad $0\leq i\leq a$,\\
    \,\cU^{i,i-b+1}(-i-1) & \textrm{for} \quad  $0\leq i \leq b-1$,\\
    \,\cU^{b,-i}(-b-1) & \textrm{for} \quad
    $ 0\leq i \leq c$.
    \end{cases*}
\end{equation*}
It is immediate to verify that
\begin{equation}\label{eq:S-set-dual}
    (\bS^{a,c}_b)^*= \bS^{c,a}_b(b+1).
\end{equation}
We also remark that 
\begin{align}\label{eq:staircase-prop} 
    \bS^{a+1,c}_b= \{\,\cU^{a+1,-b}\,\}\sqcup \bS^{a,c}_b,\quad 
    \bS^{a,c+1}_b= \bS^{a,c}_b\sqcup \{\,\cU^{b,-c-1}(-b-1)\,\}.
\end{align} 

We now explain the relationship of $\bS_b^{a,c}$ with  staircase complexes. 

If $a, b\geq 0$ and $\maxsf\{a, b, a+b\} \leq 2n-2$, then we can apply \cref{thm:staircase} to compute the staircase complex of $\cU^{a,-b}$ on $\IGr(3,2n+1)$. As $a+b+c+2\leq 2n$, the collection $\bS_b^{a,c}$ is the set of the rightmost $a+b+c+2$ vector bundles appearing in this staircase complex (\cf \eqref{eq:weights}). If  $a+b+c = 2n-2$, then $\bS^{a,c}_b$ corresponds to the whole staircase complex.

We now consider the case where $a$ or $c$ is $-1$. Let us fix $a+b+c = 2n-3$, which corresponds to one term fewer than the amount of entries of a staircase complex on $\IGr(3, 2n+1)$. From the previous consideration and the identities stated in \eqref{eq:staircase-prop}, we obtain that
if $a,b\geq 0$ and $c = -1$, then $\bS_b^{a,-1}=\bS_b^{a,0}\setminus \;\{\,\cU^{b,0}(-b-1)\,\}$. That is, $\bS_b^{a,-1}$ is the set of entries of the staircase complex associated to $\cU^{a,-b}$ except $\cU^{b,0}(-b-1)$. 

Similarly, if $a=-1$, $2n-2\geq b\geq0$ and $c \geq 0$, then ${\bS_b^{-1,c}=\bS_b^{0,c}\setminus\; \{\,\cU^{0,-b}\,\}}$, which corresponds to all the entries of the staircase complex associated to $\cU^{0,-b}$,  except $\cU^{0,-b}$ itself. 

\begin{proposition}\label{prop:staircase compact}
Let $\Dsc\subseteq \mathbf{D}^{\mathrm{b}}(\IGr(3,2n+1))$ be a full triangulated subcategory and $l\in \mathbb{Z}$. Let $(a,b,c)$ be a triple with $a+b+c=2n-3$ and $a,c \geq -1$ and $2n-2\geq b\geq 0$. If $\bS^{a,c}_b(l)\subset \Dsc$, then we have the following cases:
\begin{itemize}
    \item if $a\geq-1$ and $b,c \geq0$, then $ \cU^{a+1,-b}(l)\in \Dsc$;
    \item  if $a,b\geq 0$ and $c\geq -1$, then $\cU^{b,-c-1}(l-b-1)\in \Dsc$.
\end{itemize}
\end{proposition}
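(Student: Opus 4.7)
The plan is to recognize that $\bS^{a,c}_b(l)$, together with the single target bundle, constitutes the full collection of terms of a staircase complex (\cref{thm:staircase}) on $\IGr(3, 2n+1)$ twisted by $\cO(l)$. Since a staircase complex is acyclic and has exactly $2n$ terms, knowing that $2n-1$ of them lie in the triangulated subcategory $\Dsc$ forces the remaining one into $\Dsc$ as well, by the standard convolution (iterative cone) argument applied to the brutal truncations of the exact complex.

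For the first case ($a\ge -1$, $b,c\ge 0$, target $\cU^{a+1,-b}(l)$), I would apply \eqref{eq:weights} with $\lambda=(a+1,0,-b)$, so that $\cU^{a+1,-b}$ becomes the rightmost term $\cU^{\mu_0}$ of $\Stair(\cU^{a+1,-b})$ (after the twist provided by \cref{rem:other-stair}). A termwise check against the three groups in the definition of $\bS^{a,c}_b$ shows that the first group matches $\cU^{\mu_j}$ for $1\le j\le a+1$, the second group matches $\cU^{\mu_j}$ for $a+2\le j\le a+b+1$ (after observing that the untwisted weight of $\cU^{i,i-b+1}(-i-1)$ is $(-1,-i-1,-b)$), and the third group matches $\cU^{\mu_j}$ for $a+b+2\le j\le 2n-1$ (where the upper bound uses $a+b+c=2n-3$). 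The unique missing term is therefore $\cU^{\mu_0}=\cU^{a+1,-b}$, and the conclusion follows.

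For the second case ($a,b\ge 0$, $c\ge -1$, target $\cU^{b,-c-1}(l-b-1)$), I would instead apply \eqref{eq:weights} with $\lambda=(a,0,-b)$ and work with $\Stair(\cU^{a,-b})$. The analogous matching places $\bS^{a,c}_b$ onto $\cU^{\mu_j}$ for $0\le j\le 2n-2$, so the unique missing term is $\cU^{\mu_{2n-1}}$. Using $a+b+c=2n-3$ to rewrite $\mu_{2n-1}=(-1,-b-1,a-(2n-1))=(-1,-b-1,-(b+c+2))$ and then adding $(b+1,b+1,b+1)$ yields $(b,0,-(c+1))$, so $\cU^{\mu_{2n-1}}=\cU^{b,-c-1}(-b-1)$; twisting the whole argument by $\cO(l)$ produces the desired bundle.

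The only real work is the combinatorial bookkeeping that matches each of the three groups defining $\bS^{a,c}_b$ to the three index ranges of \eqref{eq:weights}. The boundary cases $a=-1$ (first case) and $c=-1$ (second case) slot in harmlessly: the corresponding group of $\bS^{a,c}_b$ is empty, and the corresponding range of indices in \eqref{eq:weights} is vacuous. I do not anticipate any conceptual obstacle beyond this verification.
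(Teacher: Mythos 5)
Your proof is correct. For the first case you take exactly the paper's route: identify $\bS^{a,c}_b(l)$ with the terms $\cU^{\mu_j}$, $1\le j\le 2n-1$, of $\Stair(\cU^{a+1,-b}(l))$ and use acyclicity to capture $\cU^{\mu_0}$. For the second case, however, you diverge: the paper passes to the dual category and invokes \eqref{eq:S-set-dual}, $(\bS^{a,c}_b(l))^* = \bS^{c,a}_b(b+1-l)$, then applies the first case to $\Dsc^*$ and dualizes back. You instead work directly with $\Stair(\cU^{a,-b}(l))$ and compute that $\bS^{a,c}_b(l)$ exhausts $\cU^{\mu_j}$ for $0\le j\le 2n-2$, leaving $\cU^{\mu_{2n-1}}=\cU^{b,-c-1}(l-b-1)$ as the unique missing term; your identification of $\mu_{2n-1}$ is correct. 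Both arguments are valid. The paper's version is shorter once \eqref{eq:S-set-dual} is available, and that duality identity is reused elsewhere (e.g., in the proof of Proposition~\ref{prev}); your version makes the two cases symmetric and self-contained, at the cost of a second index bookkeeping pass, and has the small advantage of not requiring you to know that $\Dsc^*$ inherits the triangulated-subcategory structure. One tiny imprecision: the terms of the staircase complex are multiples $\wedge^{\nu_j}V^*\otimes\cU^{\mu_j}$, not literally the bundles $\cU^{\mu_j}$, so ``$\bS^{a,c}_b$ constitutes the full collection of terms'' should be read as ``the terms are multiples of the bundles in $\bS^{a,c}_b$''; this is harmless since $\Dsc$ is triangulated.
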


\begin{proof}
    In the first case, we consider the staircase complex of $\cU^{a+1,-b}(l)$ as the left resolution of its rightmost term. The terms in the resolution are multiples of the bundles in  $\bS^{a,c}_b(l)\subset \Dsc$, we conclude that $\cU^{a+1,-b}(l)\in \Dsc$. 
    
    For the second statement, we apply \eqref{eq:S-set-dual}. From the hypotheses, we have that $(\, \bS_b^{a,c}(l)\,)^*=\bS_b^{c,a}(b+1-l)\subset \Dsc^*$. Applying the first statement, we obtain $\cU^{c+1,-b}(b+1-l)\in \Dsc^*$, hence $\cU^{b,-c-1}(l-b-1)\in \Dsc$, proving the second claim.
\end{proof}

The following \cref{prev} formalizes the idea of applying the staircase complex several times in a row. 
Consider the following partitions of the set $\bT$ defined in \eqref{def:T}:
\begin{equation}\label{eq:partit}
    \bT=\bigsqcup_{0\leq b \leq 2n-2} \bP_b =
    \bigsqcup_{0\leq a \leq 2n-2} \bN_a.
\end{equation}
where
\begin{equation*}
    \bP_b=\{\,\cU^{i,-b}\mid 0\leq i \leq 2n-2-b\,\},\quad
    \bN_a=\{\,\cU^{a,-i}\mid 0\leq i \leq 2n-2-a\,\}.
\end{equation*}

Let $\Dsc$ be a triangulated subcategory. Fixing $b$, we observe that the  staircase complexes associated to $\cU^{i,-b}$ contains the left resolution of $\cU^{i+1,-b}$. This allows us to generate $\cU^{i+1,-b}$ starting from the staircase complex of  $\cU^{i,-b}$. Proceeding inductively, this allows to generate all the bundles $\cU^{j,b}$ with $j\geq i$, which are all contained in $\bP_b$. The same works with the right ends of the staircase complexes and $\bN_a$.

This fact is shown in the following picture (\cref{explanatory}), where we represent the weights in $\bT$ for $n=4$. In this picture, the sets $\bP_i$ are given by the ascending diagonals with second coordinate equal to $i$, while the sets $\bN_i$ are given by descending diagonals with first coordinate $i$. 

We circle the elements of $\bS_{1}^{5,0}$, $\bS_{1}^{4,1}$ and $\bS_{1}^{3,2}$, which correspond to  the bundles belonging to three staircase complexes; notice that they have several elements in common. This shows that  once we proved that $\bS_{1}^{2,2}\subset \Dsc$, then $\bS_{1}^{3,2}\subset \Dsc$ by \cref{prop:staircase compact}, but then,  the left part of the staircase complex of $\cU^{4,-1}$ is contained in $\bS_{1}^{3,2}$, so we can apply repeatedly  \cref{prop:staircase compact}.

\begin{figure}[H]
\begin{minipage}[c]{0.5\textwidth}
\begin{center}
\includegraphics[scale=0.5]{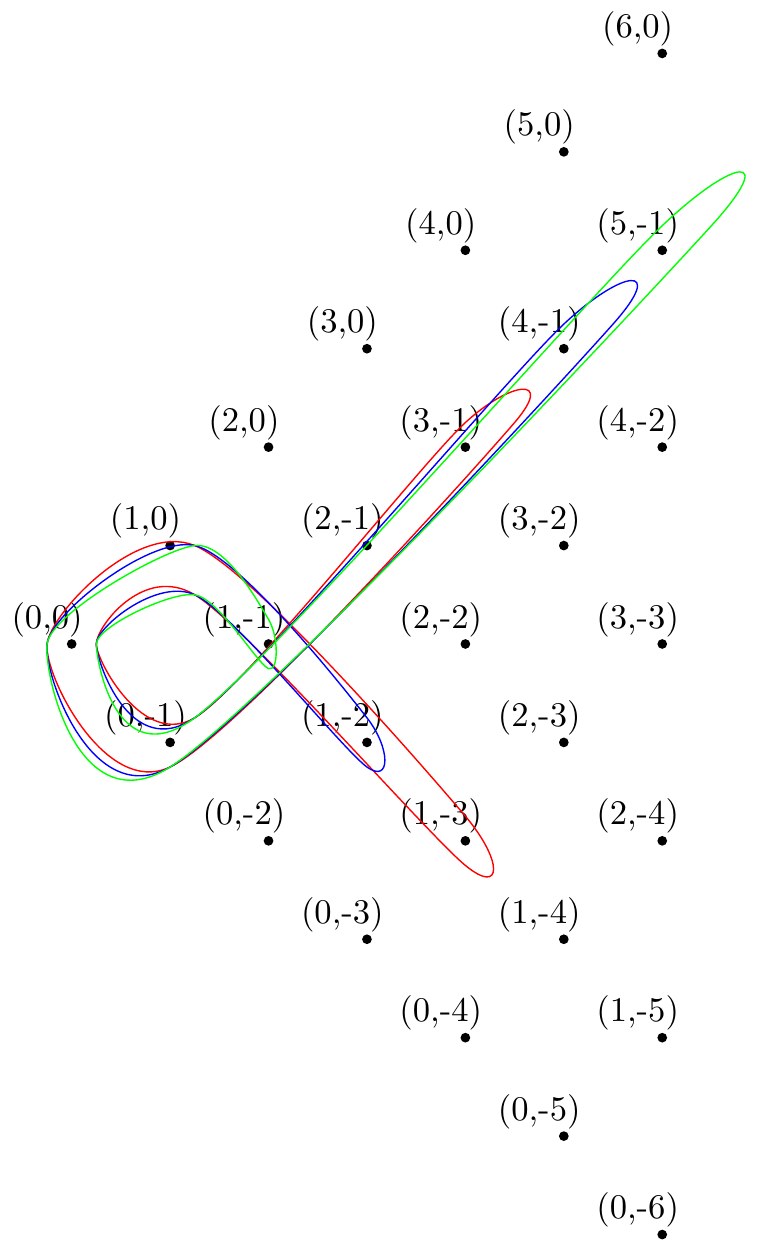} 
\end{center}
\end{minipage}\hfill
\begin{minipage}[c]{0.5\textwidth}
\begin{center}
\caption{The weights in $\bT$ for $n=4$. The sets   $\bS_1^{5,0}$, $\bS_1^{4,2}$, $\bS_1^{3,3}$ are in green, blue, red, respectively.}
\label{explanatory}
\end{center}
\end{minipage}
\end{figure}

Using this observation, we prove the following.

\begin{proposition}\label{prev}
Let $\Dsc\subseteq \mathbf{D}^{\mathrm{b}}(\IGr(3,2n+1))$ be a triangulated subcategory and $l\in \mathbb{Z}$. Let $(a,b,c)$ be a triple with $a+b+c = 2n-3$ and $a,c \geq -1$ and $2n-2\geq b\geq 0$. If $\bS^{a,c}_b(l)\subset \Dsc$, then:
\begin{equation*}
    \bP_b(l)\subset \Dsc \quad and \quad \bN_b(l-b-1) \subset \Dsc.
\end{equation*}
\end{proposition}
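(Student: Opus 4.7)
The plan is to iteratively apply \cref{prop:staircase compact} to ``walk'' along the anti-diagonal $a'+c' = 2n-3-b$, producing $\bS^{a',c'}_b(l)\subset \Dsc$ for every admissible pair $(a',c')$ with $a',c'\geq -1$. From this family, both $\bP_b(l)$ and $\bN_b(l-b-1)$ will drop out by inspecting which bundle lies in which $\bS^{a',c'}_b$.

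The key inductive step is the following. Suppose $\bS^{a',c'}_b(l)\subset \Dsc$ with $c'\geq 0$. The first statement of \cref{prop:staircase compact} produces $\cU^{a'+1,-b}(l)\in \Dsc$. Combining this with the inclusion $\bS^{a',c'-1}_b \subset \bS^{a',c'}_b$ and the identity $\bS^{a'+1,c'-1}_b = \{\cU^{a'+1,-b}\}\sqcup \bS^{a',c'-1}_b$ from \eqref{eq:staircase-prop}, we conclude $\bS^{a'+1,c'-1}_b(l)\subset \Dsc$. Symmetrically, if $a'\geq 0$ then the second statement yields $\cU^{b,-c'-1}(l-b-1)\in \Dsc$ and hence $\bS^{a'-1,c'+1}_b(l)\subset \Dsc$.

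Starting from $\bS^{a,c}_b(l)$, I would iterate the first step to traverse the anti-diagonal from $(a,c)$ to $(2n-2-b,-1)$, and iterate the second step to traverse it from $(a,c)$ to $(-1,2n-2-b)$. Note that at most one of $a,c$ can equal $-1$, since $a+c = 2n-3-b\geq -1$ thanks to $b\leq 2n-2$. In the edge case $c=-1$ (so $a\geq 0$), the first statement is not directly applicable; one instead applies the second statement once to obtain $\cU^{b,0}(l-b-1)\in \Dsc$, which upgrades the hypothesis to $\bS^{a,0}_b(l)\subset \Dsc$ via \eqref{eq:staircase-prop} and unlocks the iteration. The edge case $a=-1$ is symmetric. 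After the full walk, $\bS^{a',c'}_b(l)\subset \Dsc$ for every admissible $(a',c')$.

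To conclude, for each $0\leq i\leq 2n-2-b$ the bundle $\cU^{i,-b}(l)$ lies in $\bS^{a',c'}_b(l)$ as soon as $a'\geq i$; such $(a',c')$ always exist in the range just constructed, giving $\bP_b(l)\subset \Dsc$. An analogous inspection of the third component of $\bS^{a',c'}_b$ gives $\bN_b(l-b-1)\subset \Dsc$. The only substantive work is the bookkeeping of the two edge cases; I do not foresee any essential obstacle beyond it.
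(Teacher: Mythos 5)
Your proposal is correct and follows essentially the same route as the paper: induct along the anti-diagonal $a'+c'=2n-3-b$ by alternating \cref{prop:staircase compact} with the identities \eqref{eq:staircase-prop}. The only cosmetic difference is that the paper proves only the $\bP_b$ inclusion this way and then obtains $\bN_b(l-b-1)$ by dualizing via \eqref{eq:S-set-dual}, whereas you walk the anti-diagonal explicitly in both directions; both work, and your edge-case bookkeeping at $a=-1$ or $c=-1$ is fine.
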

\begin{proof}
Assume $l=0$. We prove the statement for any triple $(a,b,c)$ with the properties stated above, in particular, $a=2n-3-c-b$. We prove $\bP_b\subset \Dsc$, proceeding by induction on $c$. If $c=-1$, we have $a=2n-2-b$. By definition:
\begin{equation*}
    \bS^{a,-1}_b=\{\,\cU^{i,-b} \mid 0\leq i\leq a\,\}\,\sqcup\, \{\,\cU^{b,-i}(-i-1) \mid 0\leq i\leq b-1\,\} \supseteq \bP_b,
\end{equation*}
proving the base of induction. Suppose $c\geq 0$, then by \cref{prop:staircase compact} we have $\cU^{a+1,-b}\in \Dsc$. Applying \eqref{eq:staircase-prop}, we have $\bS^{a+1,c}_b\subset \Dsc$. Finally with \eqref{eq:staircase-prop}, $\bS^{a+1,c-1}_b \subset \bS^{a+1,c}_b\subset \Dsc$. By the induction hypothesis, $\bP_b\subset \Dsc$, proving the first part of the statement. When $l\neq 0$, we apply the result to a twist of $\Dsc$.

From the statement on $\bP_b$ we deduce the result for $\bN_b$ by dualizing and applying \eqref{eq:S-set-dual} (as in the final part of \cref{prop:staircase compact}).
\end{proof}

\begin{proposition}\label{prop:final staircase}
Let $\Dsc\subseteq \DX$ be a full admissible subcategory. If $\bT(l)\subset \Dsc$ for ${l=0,\dots, 2n-2}$, then $\Dsc=\DX$.
\end{proposition}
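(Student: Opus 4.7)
The plan is to deduce the conclusion from \cref{prop:span class}: it suffices to show that $\bT(l)\subset \Dsc$ for every $l\le 0$. I will prove, by induction on $N\ge 0$, the stronger assertion that $\bT(l) \subset \Dsc$ for all $l \in \{-N, -N+1, \dots, 2n-2\}$. The base case $N=0$ is the hypothesis; once the outer induction is complete, \cref{prop:span class} (whose hypotheses include the admissibility of $\Dsc$) gives $\Dsc = \DX$.

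For the inductive step, fix $N\ge 0$, assume $\bT(l)\subset\Dsc$ for $l\in\{-N,\dots,2n-2\}$, and aim to show that $\cU^{i,-j}(-N-1)\in \Dsc$ for every pair $(i,j)$ with $0\le i,j$ and $i+j\le 2n-2$. The main device is the staircase complex (\cref{thm:staircase} together with \cref{rem:other-stair}) of the bundle $\cU^{2n-2-i-j,-i}$ on $\IGr(3,2n+1)$, tensored throughout by $\cO(i-N)$. Reading the weights in \eqref{eq:weights} with $(a,b,c)=(2n-2-i-j,\,i,\,j)$ and rewriting each $\cU^{\mu_s}$ in the shorthand of \cref{sec:full}, the terms of this acyclic complex fall into three groups:
\begin{gather*}
\text{(A)}\ \cU^{a',-i}(i-N)\ \text{for}\ 0\le a'\le 2n-2-i-j,\ \text{all at twist}\ i-N;\\
\text{(B)}\ \cU^{s,-(i-1-s)}(i-N-s-1)\ \text{for}\ 0\le s\le i-1,\ \text{at twists in}\ [-N,i-N-1];\\
\text{(C)}\ \cU^{i,-s''}(-N-1)\ \text{for}\ 0\le s''\le j,\ \text{all at the new twist}\ -N-1.
\end{gather*}
A direct manipulation of \eqref{eq:weights} identifies the leftmost term of the complex, namely $\cU^{\mu_{2n-1}}$ with $\mu_{2n-1}=(-1,-i-1,-i-j-1)$, with the $s''=j$ member of group (C) after factoring $\mu_{2n-1}$ through $\cO(-i-1)$ and applying the overall twist $\cO(i-N)$; that is, precisely the target $\cU^{i,-j}(-N-1)$.

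By the outer inductive hypothesis, groups (A) and (B) lie entirely in $\Dsc$, since their twists are in $[-N,2n-2]$. To handle group (C) I run an inner induction on $j$, with $i$ fixed: for $j=0$ group (C) reduces to the target $\cU^{i,0}(-N-1)$ alone, and exactness of the staircase (an acyclic complex all but one of whose terms lie in a triangulated subcategory forces the remaining term into it too) places it in $\Dsc$; for $j\ge 1$ the inner hypothesis supplies $\cU^{i,-s''}(-N-1)\in \Dsc$ for $s''<j$, and the same exactness argument produces the leftmost member $\cU^{i,-j}(-N-1)$. Letting $i$ vary over $\{0,\dots,2n-2\}$ completes the outer induction.

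The only combinatorial point that must be checked carefully is the weight identification of the leftmost term in group (C); everything else reduces to verifying that the twists appearing in (A) and (B) remain in $[-N,2n-2]$. No symplectic bundle relation from \cref{sec:symp} is needed here; those relations will instead play a crucial role in \cref{sec:algorithmic}, where the hypothesis $\bT(l)\subset\Dsc$ for $l=0,\dots,2n-2$ is established from the exceptional collection $\bB$.
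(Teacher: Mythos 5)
Your proof is correct and follows essentially the same route as the paper's. The mechanism is identical: read the (twisted) staircase complex of some $\cU^{a,-b}$ from its left end, observe that all other terms lie at twists already known to be in $\Dsc$, and conclude that the leftmost term is in $\Dsc$ too. The only difference is presentational: the paper packages the induction over the leftmost terms into \cref{prop:staircase compact} and \cref{prev} (using the sets $\bS^{2n-2-b,-1}_b(b)$ with $c=-1$ and then partitioning $\bT(-1)$ into the diagonals $\bN_b(-1)$), whereas you re-prove the needed special case of \cref{prev} inline via the inner induction on $j$. Your bookkeeping (the weight identification $\mu_{2n-1}=(-1,-i-1,-i-j-1)\leadsto \cU^{i,-j}(-i-1)$, the twist ranges of groups (A) and (B), and the counting $(2n-1-i-j)+i+(j+1)=2n$ terms) checks out, and you correctly note that the symplectic relations of \cref{sec:symp} play no role in this proposition.
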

\begin{proof}
Let us consider the set 
\begin{align*}
    \bS^{2n-2-b,-1}_b &= \{\,\cU^{i,-b} \mid 0\leq i\leq 2n-2-b\,\}\,\sqcup\, \{\,\cU^{b,-i}(-i-1) \mid 0\leq i\leq b-1\,\}\subseteq \\
    &\subseteq \bT \,\sqcup\, \bigsqcup_{0\leq i\leq b-1} \bT(-i-1).
\end{align*}
In particular, twisting the previous inclusion by $\cO(b)$, for $0\leq b\leq 2n-2$, we obtain:
\begin{equation*}
    \bS^{2n-2-b,-1}_b(b)\subseteq \bT(b) \sqcup \bigsqcup_{0\leq i\leq b-1} \bT(b-i-1)\subset \Dsc.
\end{equation*}
By \cref{prev}, we have $\bN_b(-1)\subset \Dsc$ for any $0\leq b\leq 2n-2$. As $\bT(-1)$ can be partitioned as the disjoint union of $\bN_b(-1)$ for $0\leq b \leq 2n-2$ by \eqref{eq:partit},  we obtain $\bT(-1)\subset \Dsc$. Inductively, we obtain $\bT(-l)\subset \Dsc$ for every $l\geq 0$, concluding the proof with \cref{prop:span class}. 
\end{proof}

Finally, we are able to show the fullness in the Lefschetz collection with basis $\bB$ on $\IGr(3,9)$.
\subsection{Proof of fullness}\label{sec:algorithmic}

We go back to the case $X=\IGr(3,9)$. Let 
\begin{equation*}
    \Dsc:= \langle \,  \bB,\,\bB(1),\,\bB(2),\,\bB(3),\,\bB(4),\,\bB(5),\,\bB(6)\, \rangle\subseteq \DX.
\end{equation*}
By \cref{thm:main}, $\Dsc$ is generated by an exceptional collection, hence it is admissible (\cf \cref{subsec:except}). We want to prove that $\Dsc=\DX$.
Recall that
\begin{equation}\label{start}
    \bB_1\cup \bB_2 = \{\,\cU^{0,0,-2}, \, \cU^{0,0,-1}, \, \cU^{1,0,-1} ,\, \cU^{2,0,-1},\, \cU^{0,0,0} ,\, \cU^{1,0,0} ,\, \cU^{2,0,0},\, \cU^{3,0,0}\,\}\subset \langle \, \bB \, \rangle,
\end{equation}
hence  $(\, \bB_1\cup \bB_2\, )(l)\subset \Dsc$ for $l=0,\dots,6$. 
To prove that $\Dsc$ is full, we go through the following algorithm, which consists in the iteration of these two steps:
\begin{itemize}
    \item determine all the $\bS^{a,c}_b(l)\subset \Dsc$ with $a+b+c = 6$ and apply \cref{prev};
    \item  apply \cref{prop:rule-S} where is possible.
\end{itemize}
The procedure terminates if we prove that the following set of bundles 
\begin{equation*}
    \bT,\,\bT(1),\,\dots,\, \bT(6),
\end{equation*}
where $\bT$ was defined in  \eqref{def:T}, is contained in $\Dsc$. Then we can apply \cref{prop:final staircase} and prove fullness. Notice that it is not certain that the presented algorithm  terminates, even if the starting collection is full.
In our specific case, we are able to show in nine steps that the condition in \cref{prop:final staircase} holds. 
\begin{theorem}\label{prop:fullness}
    The condition in \cref{prop:final staircase} holds for $X=\IGr(3,9)$ and the following admissible subcategory 
    \begin{equation*}
    \Dsc := \langle \,  \bB,\,\bB(1),\,\bB(2),\,\bB(3),\,\bB(4),\,\bB(5),\,\bB(6)\, \rangle.
\end{equation*}
Hence, $\Dsc=\DX$.
\end{theorem}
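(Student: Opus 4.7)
The plan is to implement the algorithmic strategy developed in \cref{sec:strat}. By \eqref{start}, we begin with $(\bB_1 \cup \bB_2)(l) \subset \Dsc$ for every $l \in \{0, \ldots, 6\}$, and the goal is to certify $\bT(l) \subset \Dsc$ for each such $l$, at which point \cref{prop:final staircase} forces $\Dsc = \DX$.

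Two tools drive the expansion. Whenever a current snapshot of $\Dsc$ contains a staircase slice $\bS^{a,c}_b(l)$ with $a + b + c = 5$, the diagonal propagation of \cref{prev} adds the entire row $\bP_b(l)$ and column $\bN_b(l - b - 1)$ to $\Dsc$. Complementarily, the symplectic bundle relation \cref{prop:rule-S} fills in any single missing $\cU^{i,-j}(l)$ on an antidiagonal $i + j = p$ with $p > n + 1 - k = 2$, provided every other $\cU^{i',-j'}(l)$ with $i' + j' \leq p$ is already known. The proof alternates these two tools round by round, maintaining a table indexed by $(i, j, l)$ that records which elements of $\bT(l)$ are certified at each stage.

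As a concrete seed, observe that the initial data at twist $l = 1$ contains $\cU^{0,0}(1), \cU^{1,0}(1), \cU^{2,0}(1), \cU^{3,0}(1)$ together with $\cU^{0,-1}, \cU^{0,-2}$, that is, exactly the slice $\bS^{3,2}_0(1)$; applying \cref{prev} at once places $\bP_0(1)$ and $\bN_0(0)$ in $\Dsc$. Analogous choices at the remaining twist levels seed further rows and columns, symplectic completions then repair missing entries on antidiagonals $i + j \geq 3$, and the newly generated bundles unlock further staircase slices $\bS^{a,c}_b(l)$ at adjacent twists, perpetuating the cascade.

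The main obstacle is the combinatorial bookkeeping rather than any conceptual difficulty: there are $7 \cdot 28 = 196$ triples $(i, j, l)$ to certify, and although each individual round is routine, the substantive claim is that nine rounds of alternating staircase and symplectic steps suffice. I would structure the proof as a list of nine numbered rounds, each listing the newly recognized $\bS^{a,c}_b(l)$ slices, the outputs of \cref{prev}, and the symplectic completions of \cref{prop:rule-S}, terminating once every $\cU^{i,-j}(l)$ with $i + j \leq 6$ and $0 \leq l \leq 6$ has been accounted for, after which \cref{prop:final staircase} yields $\Dsc = \DX$.
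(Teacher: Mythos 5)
Your plan correctly identifies the approach used in the paper: bootstrap from $(\bB_1\cup\bB_2)(l)\subset\Dsc$ using staircase slices (via \cref{prev}) and symplectic bundle relations (via \cref{prop:rule-S}), terminating when $\bT(l)\subset\Dsc$ for all $l=0,\dots,6$, at which point \cref{prop:final staircase} gives $\Dsc=\DX$. Your seed observation --- that $\bS^{3,2}_0(1)\subset\Dsc$ and hence $\bP_0(1),\bN_0(0)\subset\Dsc$ --- is correct (it is part of the paper's Step~1), and $a+b+c=5$ is the right instantiation of $2n-3$ for $n=4$.

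However, this is not yet a proof: you have set up the framework but not executed it. The bootstrap procedure is not guaranteed to terminate even if the collection is full (a point the paper itself makes), so the entire mathematical content of this theorem is the explicit verification that nine rounds of alternation actually do generate every $\cU^{i,-j}(l)$ with $i+j\leq 6$ and $0\leq l\leq 6$. Saying ``analogous choices at the remaining twist levels seed further rows and columns'' and ``I would structure the proof as a list of nine numbered rounds'' defers exactly the part that must be demonstrated. Moreover, the rounds cannot be organized uniformly by twist as your sketch suggests: the available slices at each stage depend in a non-obvious way on what earlier stages produced across several twists simultaneously (for instance, the paper's Step~3 needs $\bS^{1,2}_2(5)$ together with $\bS^{2,1}_2(3)$ and $\bS^{2,1}_2(4)$ to obtain $\bP_2(3),\dots,\bP_2(5)$ and $\bN_2(0),\dots,\bN_2(2)$, and only then does \cref{prop:rule-S} repair $\cU^{3,-1}(1)$, $\cU^{1,-3}(5)$, $\cU^{3,-2}(2)$, $\cU^{2,-3}(3)$, $\cU^{2,-3}(4)$). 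You need to exhibit the concrete sequence of slices, propagations, and symplectic completions, as the paper does in \cref{sec:algorithmic}, and confirm that the termination condition of \cref{prop:final staircase} is actually met.
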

For clarity of exposition we present the steps in this procedure next to diagrams to illustrate the progress in the proof. We apply a label $\boxed{l_1\olddiv l_2}$ next to every colored area to denote for which twists we know already that the bundles in the area belong to $\Dsc$.
\begin{figure}[H]
\begin{minipage}[c]{0.5\textwidth}
\begin{center}
\includegraphics[scale=0.55]{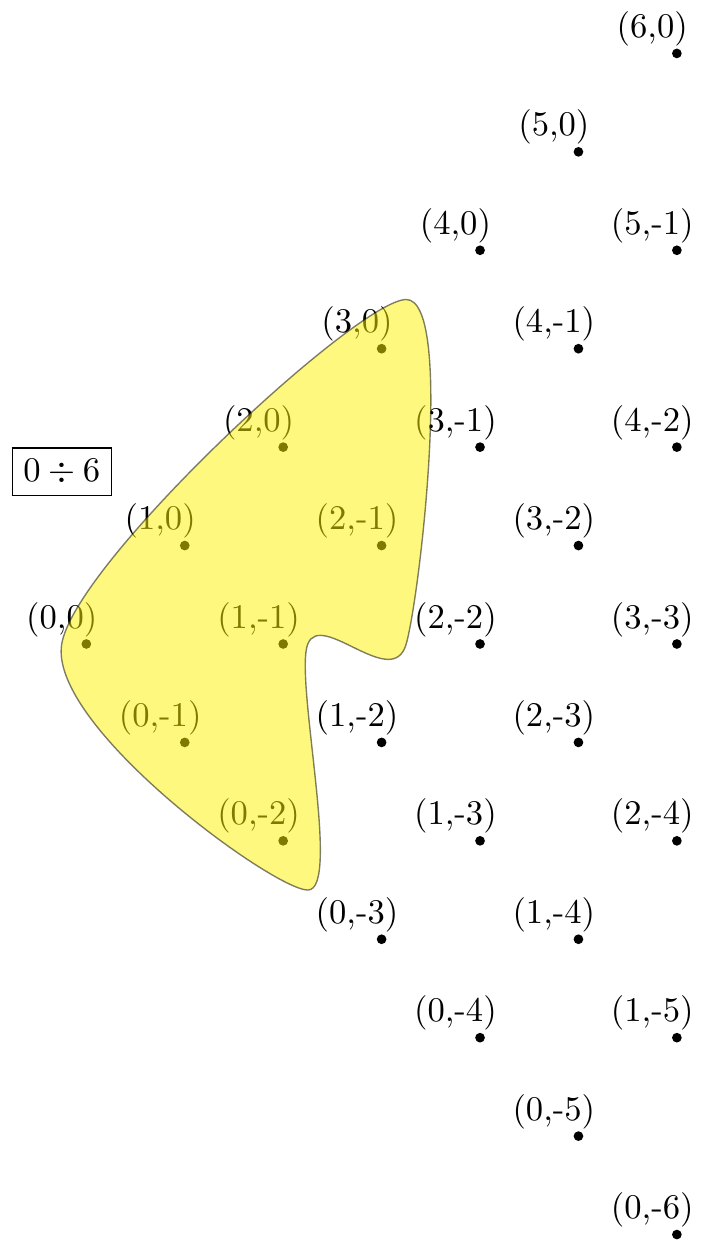}
\end{center}
\end{minipage}\hfill
\begin{minipage}[c]{0.5\textwidth}
\caption{In this triangular diagram, we represent the weights of the bundles of $\IGr(3,9)$ belonging to $\bT$. In yellow, the bundles in $\bB_1\cup \bB_2$.}
\end{minipage}
\end{figure}
\newpage
\subsubsection{Step 1.}
\begin{figure}[!ht]
\begin{center}
\includegraphics[scale=0.65]{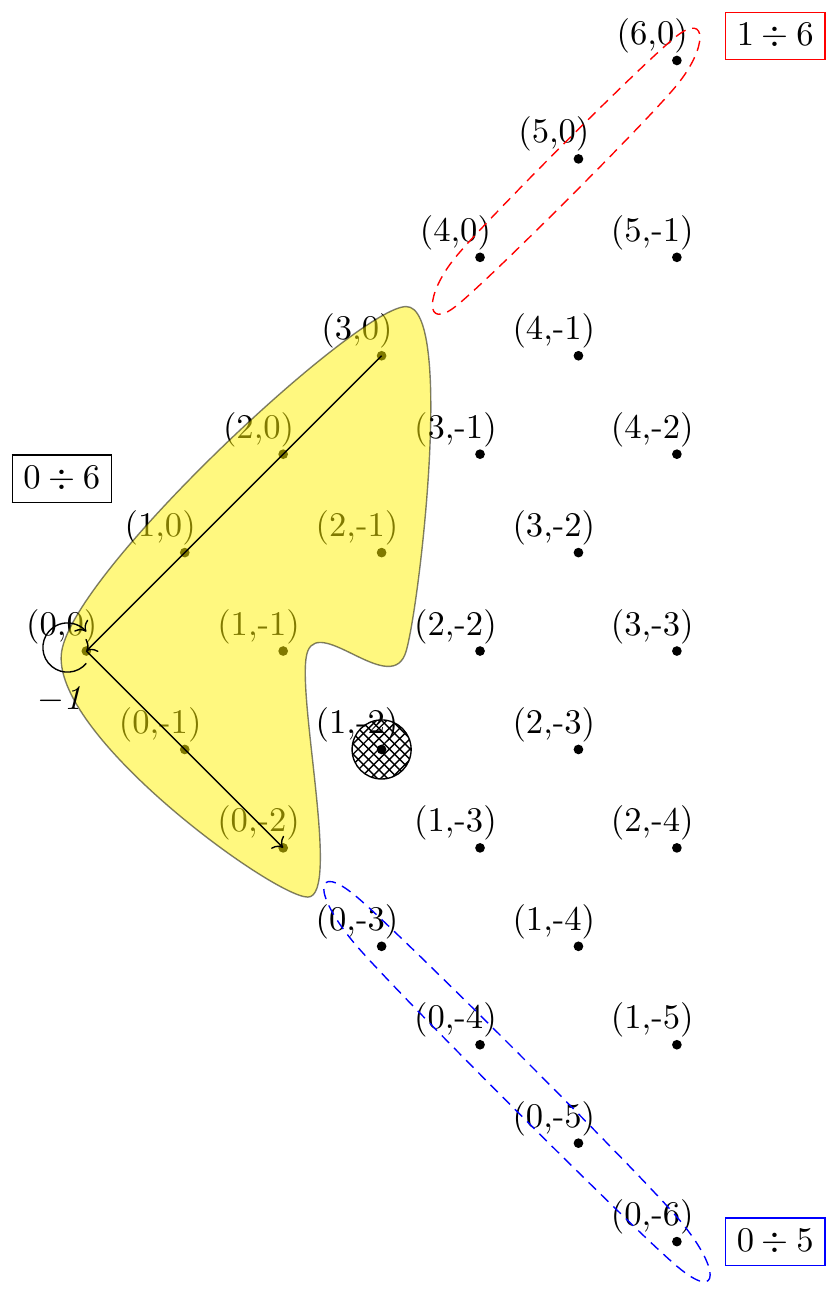}
\end{center}
\caption{The set $\bS^{3,2}_0$ and the bundle $\cU^{1,-2}$.}
\end{figure}
  As $\bS_0^{3,2}(l)\subset \Dsc$ for $l=1,\dots,6$, using \cref{prev} we obtain:
\begin{align*}
    \bP_0(l) &\subset \Dsc,  &\textrm{for} \quad l=1,\dots,6, \\
    \bN_0(l) &\subset \Dsc,  &\textrm{for} \quad l=0,\dots,5.
\end{align*}
Moreover, we notice that: 
\begin{align*}
    \cU^{i,-j}(l)\in \Dsc \quad &\textrm{for} \quad l=0,\dots,5\\
    &\textrm{and} \quad i+j\leq 3,
\end{align*}
except for $\cU^{1,-2}(l)$. Applying \cref{prop:rule-S}, we conclude
\begin{equation*}
    \cU^{1,-2}(l)\in \Dsc \quad \textrm{for} \quad l=0,\dots,5.
\end{equation*}

\newpage
\subsubsection{Step 2.}
\begin{figure}[!ht]
\begin{center}
\includegraphics[scale=0.7]{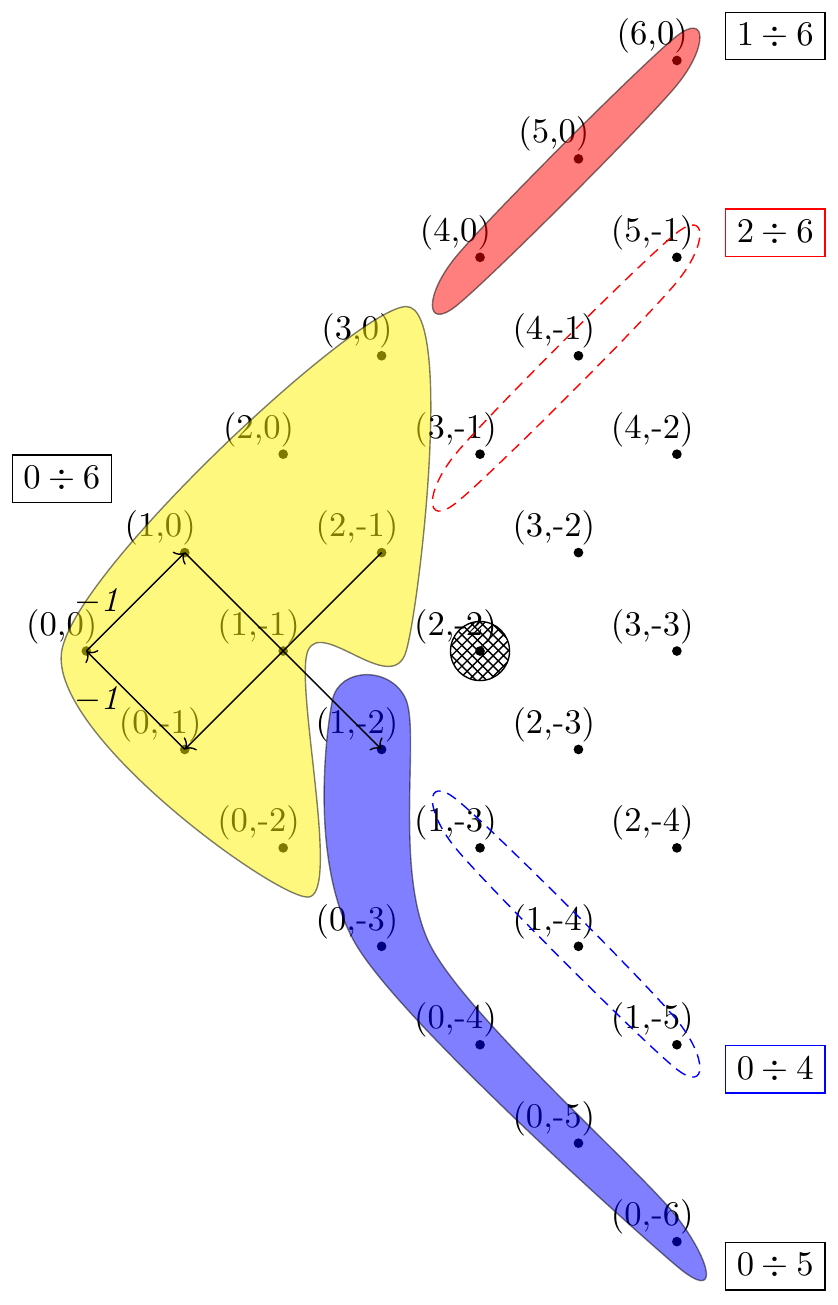}
\end{center}
\caption{The set $\bS_1^{2,2}$ and $\cU^{2,-2}$.}
\end{figure}
Given that $\bS_1^{2,2}(l)\subset \Dsc$ for $l=2,\dots,6$, we show:
\begin{align*}
    \bP_1(l) &\subset \Dsc,  &\textrm{for} \quad l=2,\dots,6, \\
    \bN_1(l) &\subset \Dsc,  &\textrm{for} \quad l=0,\dots,4.
\end{align*}
Applying \cref{prop:rule-S} to $\cU^{2,-2}$, we obtain that 
\begin{equation*}
    \cU^{2,-2}(l)\in \Dsc \quad \textrm{for} \quad l=2,\dots,4.
\end{equation*}
\newpage
\subsubsection{Step 3.}
\begin{figure}[!ht]
\begin{center}
\includegraphics[scale=0.7]{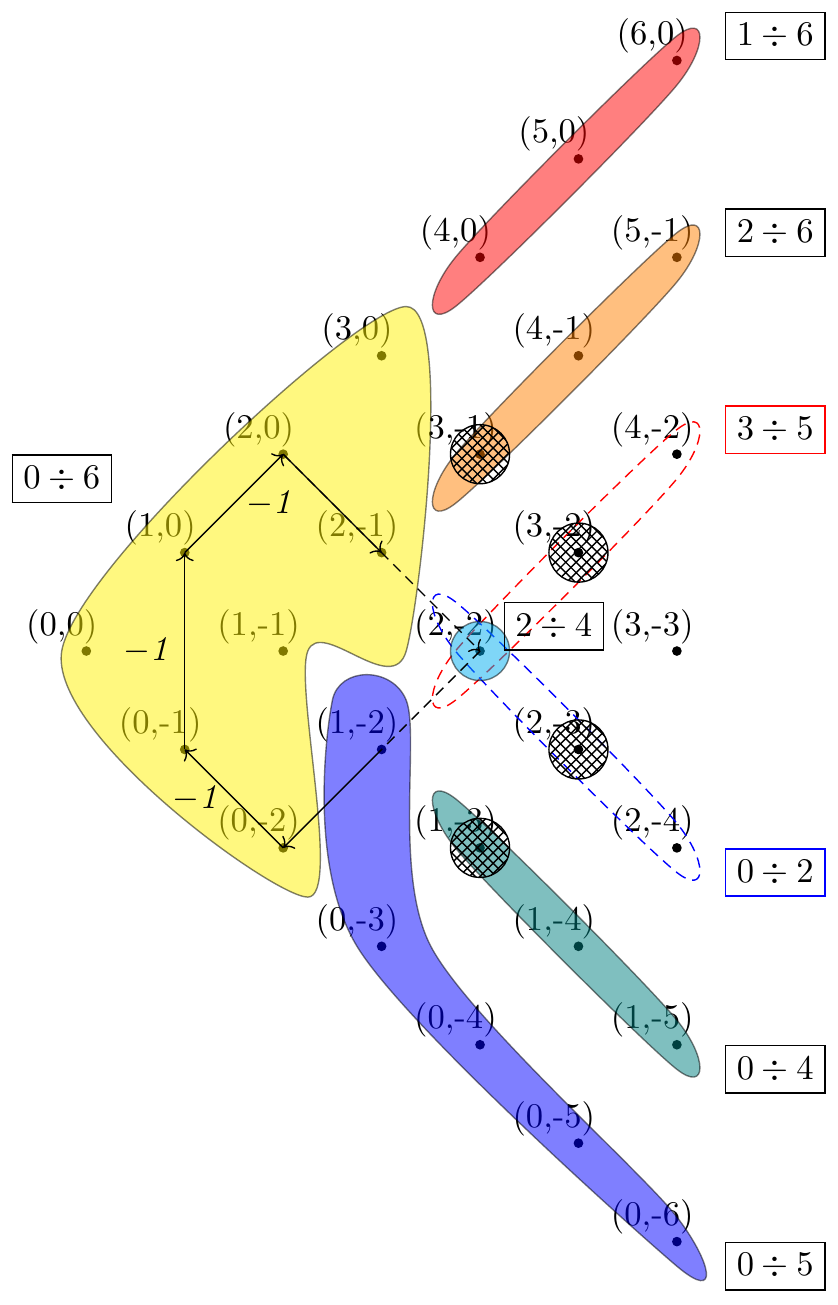}
\end{center}
\caption{In solid black arrows, $\bS_2^{1,2}\cap \bS_2^{2,1}$. Inside the circles, the bundles obtained applying \cref{prop:rule-S}.}
\end{figure}
As $\bS_2^{1,2}(l)\subset \Dsc$ for $l=5$ and $\bS_2^{2,1}(l)\subset \Dsc$ for $l=3,4$, we have:
\begin{align*}
    \bP_2(l) &\subset \Dsc, & &\textrm{for} \quad l=3,\dots,5, \\
    \bN_2(l) &\subset \Dsc, & &\textrm{for} \quad l=0,\dots,2.
\end{align*}
Applying \cref{prop:rule-S}, we obtain:
\begin{align*}
    \cU^{3,-1}(1) &\in \Dsc, \\
    \cU^{1,-3}(5) &\in \Dsc, \\
    \cU^{3,-2}(2) &\in \Dsc, \\
    \cU^{2,-3}(3),\, \cU^{2,-3}(4) &\in \Dsc.
\end{align*}

\newpage

\subsubsection{Step 4.}
\begin{figure}[!ht]
\begin{center}
\includegraphics[scale=0.7]{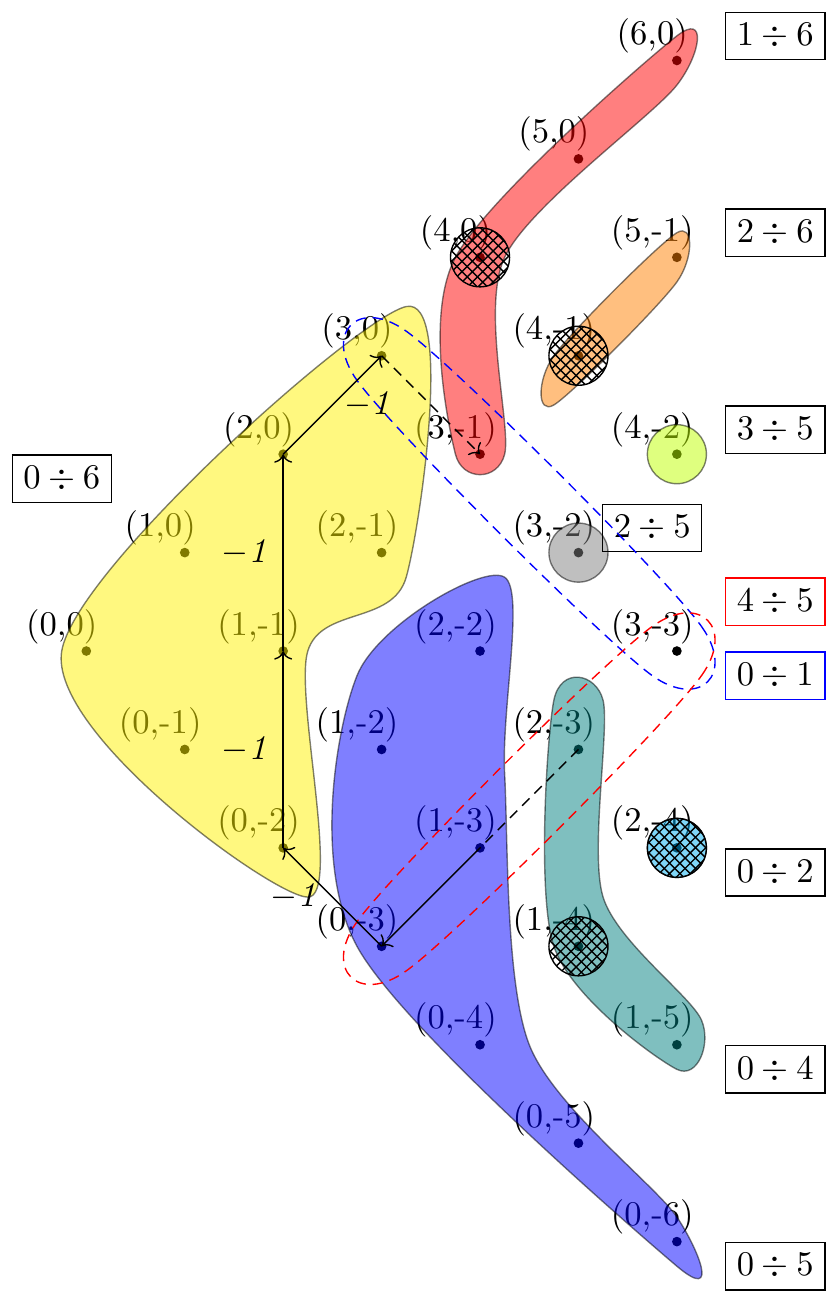}
\end{center}
\caption{In continuous black arrows, $\bS_3^{2,0}\cap \bS_3^{1,1}$. Inside the circles, the bundles obtained applying \cref{prop:rule-S}.}
\end{figure}
As $\bS_3^{2,0}(4)\subset \Dsc$ and $\bS_3^{1,1}(5)\subset \Dsc$, we have:
\begin{align*}
    \bP_3(l) &\subset \Dsc, & &\textrm{for} \quad l=4,5, \\
    \bN_3(l) &\subset \Dsc, & &\textrm{for} \quad l=0,1.
\end{align*}
Applying \cref{prop:rule-S}, we obtain the following bundles:
\begin{align*}
    \cU^{4,0} &\in \Dsc,\\
    \cU^{4,-1}(1) &\in \Dsc,\\
    \cU^{1,-4}(5) &\in \Dsc,\\
    \cU^{2,-4}(4) &\in \Dsc.
\end{align*}

\newpage
\subsubsection{Step 5.}
\begin{figure}[!ht]
\begin{center}
\includegraphics[scale=0.7]{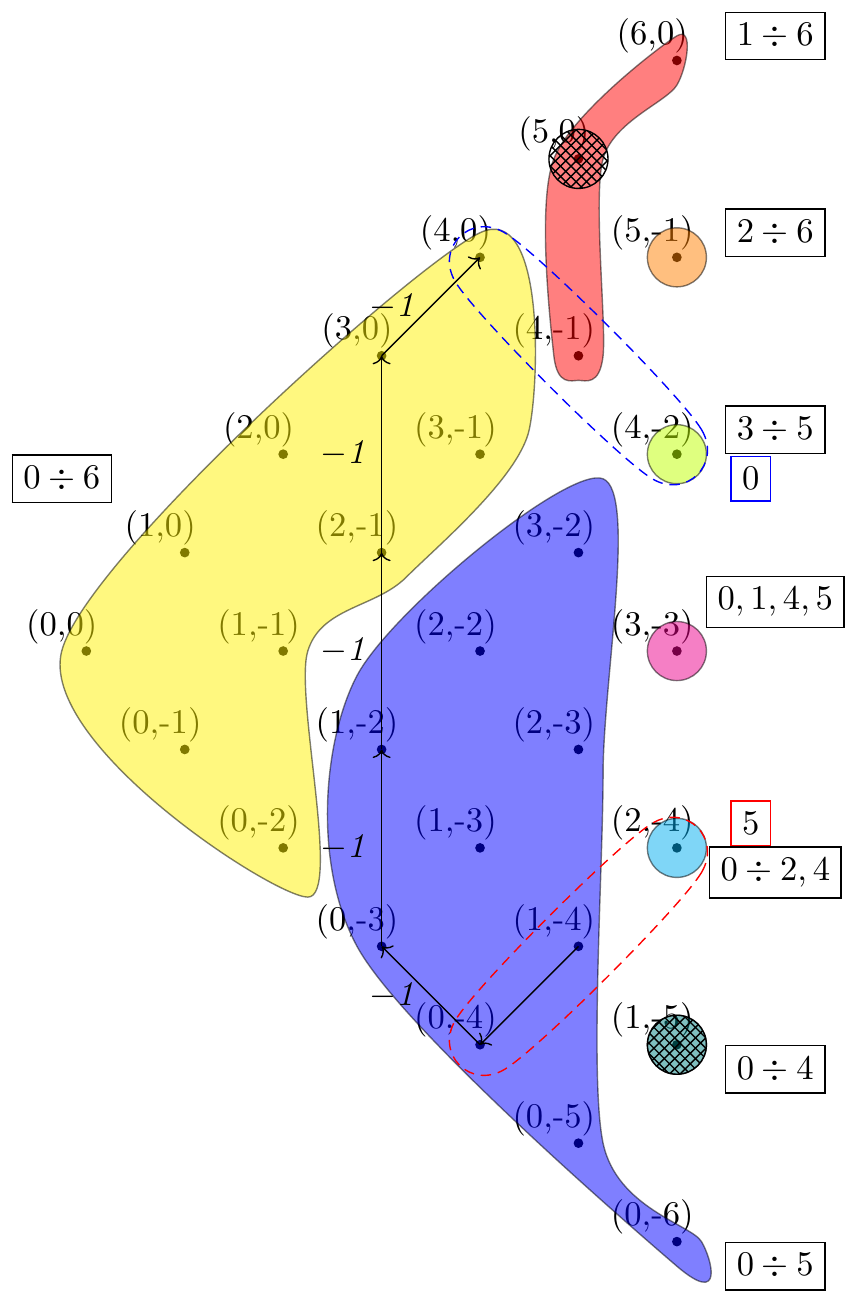}
\end{center}
\caption{The set $\bS_4^{1,0}$ and the bundles obtained by applying \cref{prop:rule-S}.}
\end{figure}
As $\bS_4^{1,0}(5)\subset \Dsc$, we obtain:
\begin{align*}
    \bP_4(5) &\subset \Dsc,\\
    \bN_4(0) &\subset \Dsc.
\end{align*}
Applying \cref{prop:rule-S}, we obtain:
\begin{align*}
    \cU^{5,0} &\in \Dsc,\\
    \cU^{1,-5}(5) &\in \Dsc.
\end{align*}

\newpage 
\subsubsection{Step 6.}
\begin{figure}[!ht]
\begin{center}
\includegraphics[scale=0.7]{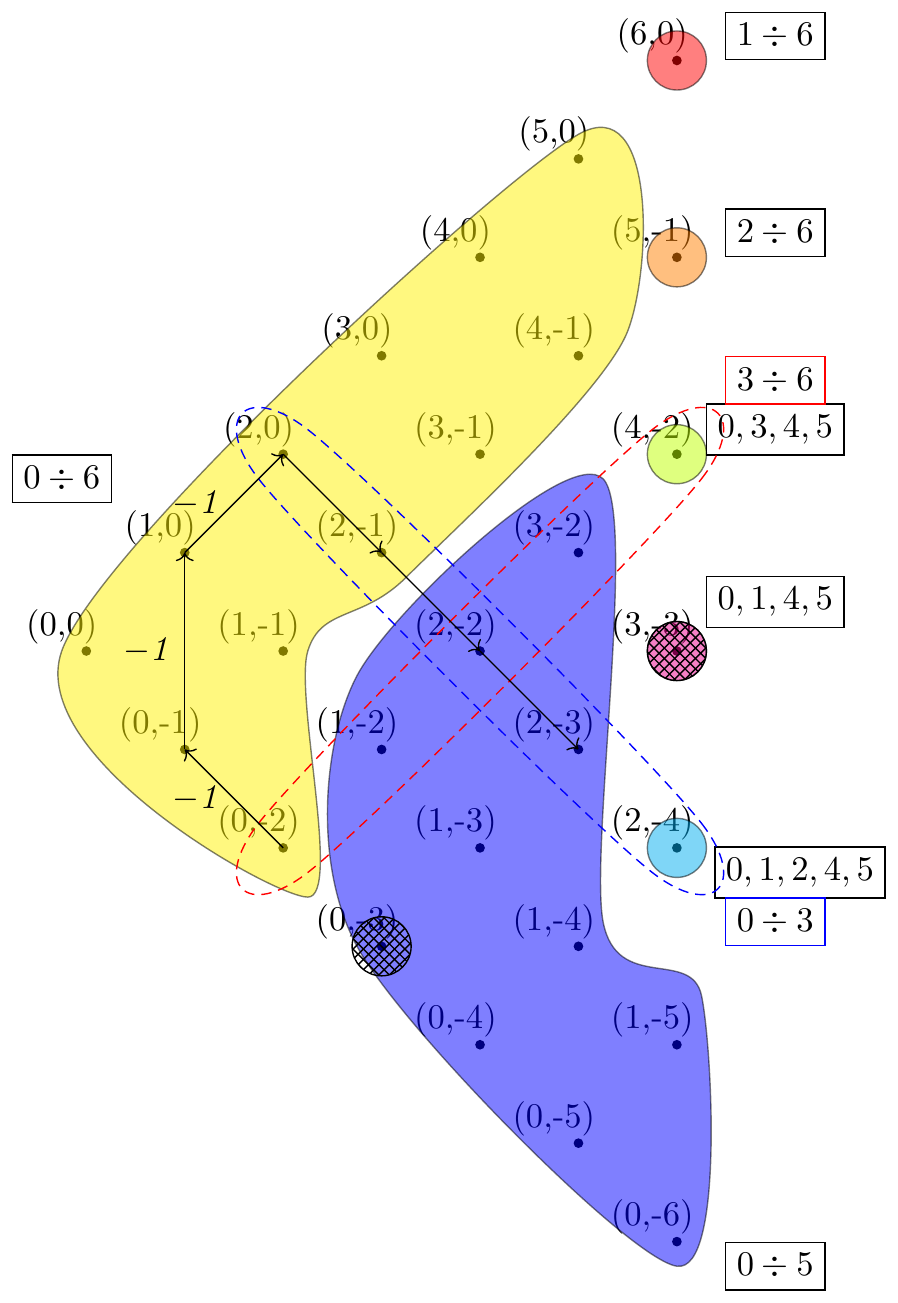}
\end{center}
\caption{The set $\bS_2^{0,3}$ and the bundles obtained by applying \cref{prop:rule-S}.}
\end{figure}
As $\bS_2^{0,3}(6)\subset \Dsc$, we have:
\begin{align*}
    \bP_2(6) &\subset \Dsc,\\
    \bN_2(3) &\subset \Dsc.
\end{align*}
Applying \cref{prop:rule-S} for $i+j=3$ and $l=6$, we obtain
\begin{equation*}
    \cU^{0,-3}(6) \in \Dsc;
\end{equation*}
while using $i+j=6$ and $l=3$, we obtain
\begin{equation*}
    \cU^{3,-3}(3) \in \Dsc.
\end{equation*}
\newpage
\subsubsection{Step 7.}
\begin{figure}[!ht]
\begin{center}
\includegraphics[scale=0.7]{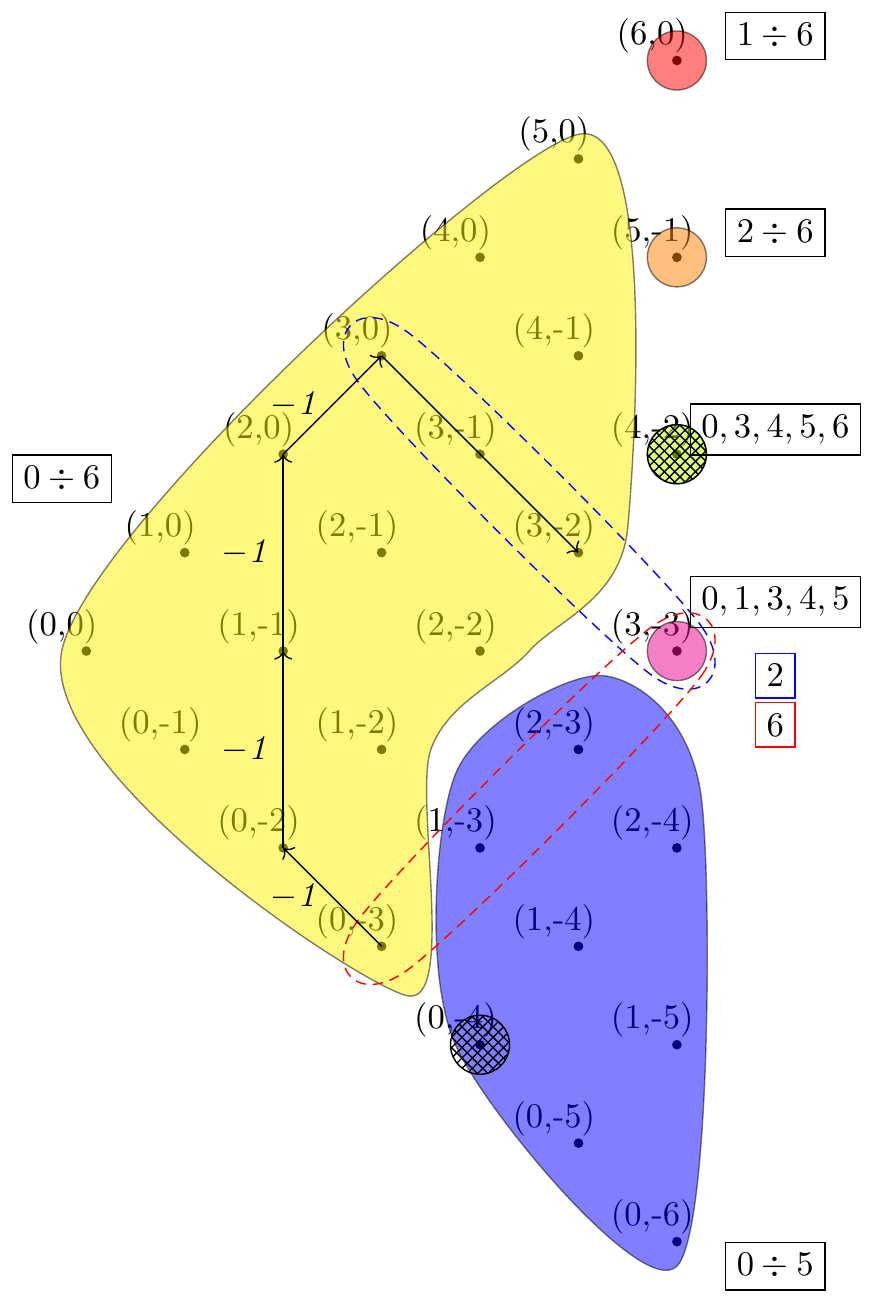}
\end{center}
\caption{The set $\bS_3^{0,2}$ and the bundles obtained by applying \cref{prop:rule-S}.}
\end{figure}
As $\bS_3^{0,2}(6)\subset \Dsc$, we have:
\begin{align*}
    \bP_3(6) &\subset \Dsc,\\
    \bN_3(2) &\subset \Dsc.
\end{align*}
Applying \cref{prop:rule-S}, we obtain:
\begin{align*}
    \cU^{0,-4}(6) &\in \Dsc,\\
    \cU^{4,-2}(2) &\in \Dsc.
\end{align*}
\newpage
\subsubsection{Step 8.}
\begin{figure}[!ht]
\begin{center}
\includegraphics[scale=0.7]{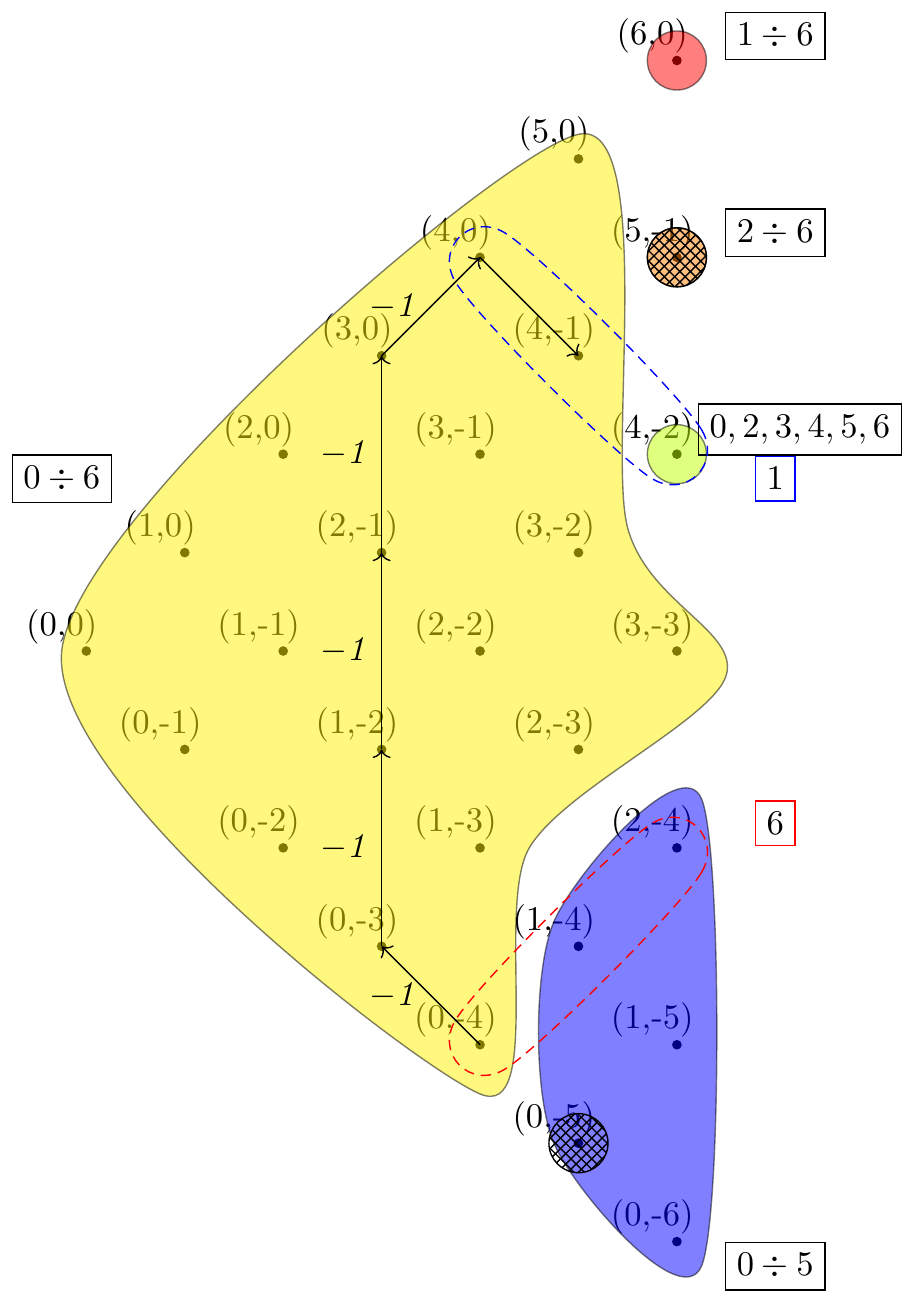}
\end{center}
\caption{The set $\bS_4^{0,1}$ and the bundles obtained by applying \cref{prop:rule-S}.}
\end{figure}
As $\bS_4^{0,1}(6)\subset \Dsc$, we have:
\begin{align*}
    \bP_4(6) &\subset \Dsc,\\
    \bN_4(1) &\subset \Dsc.
\end{align*}
As all bundles $\cU^{i,-j}(6)\in \Dsc$ for $i+j\leq 5$, applying \cref{prop:rule-S}, we obtain:
\begin{equation*}
    \cU^{0,-5}(6) \in \Dsc;
\end{equation*}
analogously, for $\cU^{i,-j}(1)\in \Dsc$ for $i+j\leq 6$, we obtain:
\begin{equation*}
    \cU^{1,-5}(1) \in \Dsc.
\end{equation*}
\newpage
\subsubsection{Step 9.}
\begin{figure}[!ht]
\begin{center}
\includegraphics[scale=0.7]{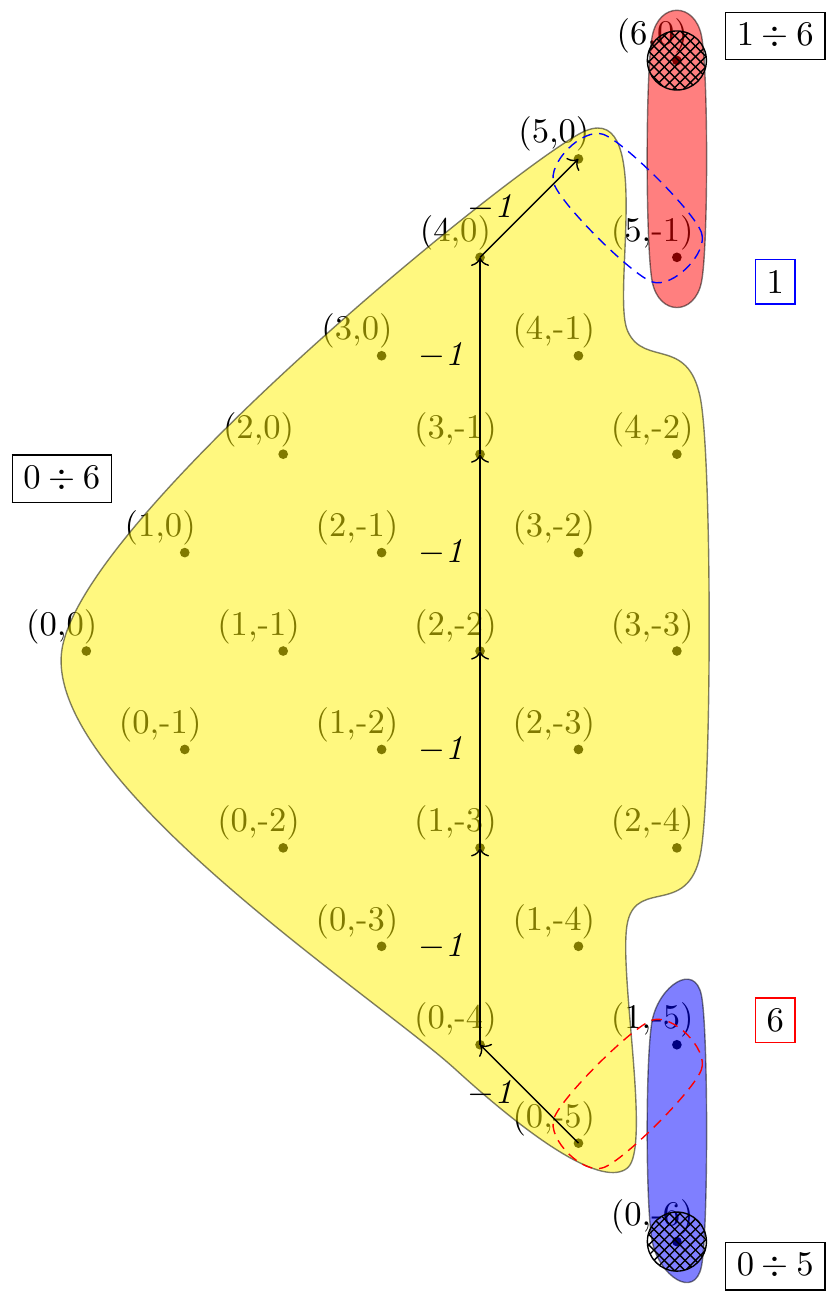}
\end{center}
\caption{The set $\bS_5^{0,0}$ and the remaining bundles.}
\end{figure}
As $\bS_5^{0,0}(6)\subset \Dsc$, we have:
\begin{align*}
    \bP_5(6) &\subset \Dsc,\\
    \bN_5(0) &\subset \Dsc.
\end{align*}
Applying \cref{prop:rule-S}, we finally generate the remaining bundles. 

\subsubsection{Conclusion}
After these nine iterations, we have shown that
\begin{equation*}
    \bT,\,\bT(1),\,\dots, \bT(6)\subset \Dsc.
\end{equation*}
Applying \cref{prop:final staircase} to $\Dsc$, we show that the Lefschetz collection induced   by  $\bB$ is full, finally  proving \cref{prop:fullness}. 
\appendix
\newpage

\section{Grothendieck group of horospherical varieties of Picard rank one}\label{sec:appendix}

Recall the notation introduced in \cref{thm:pas}. The following \cref{prop:invariants} shows that the Grothendieck group $K_0(X)$ of a horospherical variety $X$ is determined by the Grothendieck groups of its homogeneous pieces $Y,Z$. In particular, this addresses the case of odd isotropic Grassmannians. A similar results has been proved previously in singular cohomology (\cf \cite[Fact~1.8]{gonzales2018geometry}), with similar ideas, but the details of the proof were left to the reader. We provide a complete proof here.

For additional details on the following construction, we refer to \cite{pasquier2009some} and to  \cite[\textsection~1.5]{gonzales2018geometry}. We remark that there is an asymmetry in the choice of $Z$ and $Y$, as one of the closed orbits is stable under the action of the non-reductive group $\Aut(X)$, while the other is not. This difference does not impact the proof and we can fix $Y$ and $Z$ in both ways. 

Consider the following diagrams induced by the blowup of $Z$:
\begin{equation}\label{eq:pic}
    \begin{tikzcd}
    E\ar[r,"\epsilon"]\ar[d,"\rho"] &\mathfrak{X} \ar[r,"\pi"]\ar[d,"\sigma"] & Y \\
    Z\ar[r,"\iota"] & X \ar[dotted, ru] &
    \end{tikzcd}
\end{equation}
where $\mathfrak{X}\cong \textrm{Bl}_Z X$ and the exceptional divisor $E$ is isomorphic to the partial flag variety $\mathsf{G}/(\mathsf{P}_Y\cap \mathsf{P}_Z)$. The horizontal arrow $\pi:\mathfrak{X}\rightarrow Y$ is a $\mathbb{P}^c$-bundle, which restricts to an $\mathbb{A}^c$-bundle $\pi:\mathfrak{X}\setminus E \rightarrow Y$, where $c=\dimsf  X - \dimsf  Y$.
Notice that the diagram \eqref{eq:pic} is $\mathsf{G}$-equivariant with respect to the natural actions. 

To prove \cref{prop:invariants}, we need to recall some facts. 
\begin{defn}[{\cite[\textsection~1.3.5]{eisenbud20163264}}]\label{def:strat}
A smooth variety $X$ admits a \emph{finite affine stratification} if the following holds:
\begin{enumerate}
    \item\label{cd-decomp} $X=\bigsqcup\limits_{i} U_i$, where $\{U_i\}_i$ is a finite collection of irreducible locally closed subschemes,
    \item\label{cd-closure} $\overline{U}_i=\bigsqcup\limits_{U_j\subseteq \overline{U}_i} U_j$, where $\overline{U}_i$ denotes the closure of $U_i$,
    \item\label{cd-affine} $U_i\cong \mathbb{A}^{n_i}$.
\end{enumerate}
We refer to $U_i$ as the \textit{open strata}.    
\end{defn}
Similarly to the analogous statement about Chow rings (\cf \cite[Theorem~1.1.8]{eisenbud20163264}), we recall the following well known fact, which is a consequence of Quillen's localisation sequence.\begin{theorem}\label{thm:Quillen}
Let $X$ be a smooth variety admitting a finite affine stratification $\{U_i\}_i$. Let ${\{F_i\}_i}$ be a finite collection of objects in $\DX$ such that:
\begin{itemize}
    \item $\supp F_i \subseteq \overline{U}_i$,
    \item $\ranksf \, (F_i \vert_{U_i}\,)=1$.
\end{itemize}
Then, $K_0(X)$ is a free abelian group and $\{[F_i]\}_i$ is a basis of $K_0(X)$.
\end{theorem}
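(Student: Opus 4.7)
The plan is to apply Quillen's localisation sequence in $G$-theory along the filtration coming from the stratification, reducing the claim to the computation $G_0(\mathbb{A}^n)=\mathbb{Z}$. Using \cref{cd-closure} of \cref{def:strat} I would order the strata $U_0,\dots,U_N$ so that $U_j\subseteq\overline{U}_i$ implies $j\leq i$, and form the ascending chain of closed subschemes $Z_i:=\bigsqcup_{j\leq i}U_j$ of $X$, in which $Z_i\setminus Z_{i-1}$ is precisely the affine cell $U_i\cong\mathbb{A}^{n_i}$. Smoothness of $X$ identifies $K_0(X)=G_0(X)$, so it suffices to work with $G_0$ throughout.

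I would first establish generation by induction on $i$, with the inductive hypothesis that $\{[F_j]\}_{j\leq i}$ spans $G_0(Z_i)$. The base case $Z_0=\mathbb{A}^{n_0}$ gives $G_0(Z_0)=\mathbb{Z}$ by homotopy invariance of $K$-theory, with generator $[\cO_{U_0}]=[F_0|_{U_0}]$ by the rank-one hypothesis. For the inductive step, the Quillen localisation sequence
\[
    G_0(Z_{i-1})\longrightarrow G_0(Z_i)\longrightarrow G_0(U_i)\longrightarrow 0
\]
is exact with $G_0(U_i)\cong\mathbb{Z}$ generated by $[\cO_{U_i}]$. Since $\supp F_i\subseteq\overline{U}_i\subseteq Z_i$ and $\ranksf(F_i|_{U_i})=1$, the class $[F_i]\in G_0(Z_i)$ maps to a generator, so any $\alpha\in G_0(Z_i)$ differs from an integer multiple of $[F_i]$ by a class pushed forward from $Z_{i-1}$, which lies in the span of $\{[F_j]\}_{j<i}$ by induction.

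For linear independence I would use the rank-on-stratum functionals $r_j\colon G_0(X)\to G_0(U_j)=\mathbb{Z}$, obtained by restricting to $U_j$ and taking the alternating sum of generic ranks of cohomology sheaves. The support hypothesis forces $r_j([F_i])=0$ whenever $U_j\not\subseteq\overline{U}_i$, since then $U_j\cap\supp F_i=\emptyset$; and $r_i([F_i])=1$ by the rank-one hypothesis. With respect to the chosen ordering the matrix $\bigl(r_j([F_i])\bigr)$ is therefore lower triangular with $1$s on the diagonal, and applying $r_N,r_{N-1},\dots$ in turn to any relation $\sum a_i[F_i]=0$ forces every $a_i$ to vanish. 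Combined with the generation step, this shows that $K_0(X)$ is free abelian with basis $\{[F_i]\}_i$.

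The main technical subtlety is verifying that $r_j$ is well-defined on $G_0$ when the $F_i$ are honest complexes rather than coherent sheaves, but this causes no real difficulty: on the smooth affine $U_j\cong\mathbb{A}^{n_j}$ the group $G_0(U_j)=\mathbb{Z}$ is detected by the Euler characteristic of generic ranks, which is additive in distinguished triangles and hence descends to $G_0$. No deeper obstacle arises, and the argument closely parallels the Chow-theoretic treatment of cellular varieties in \cite[Theorem~1.1.8]{eisenbud20163264}.
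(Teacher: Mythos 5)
The paper does not prove this statement; it records it as ``a well known fact, which is a consequence of Quillen's localisation sequence,'' and points to the Chow-theoretic analogue \cite[Theorem~1.1.8]{eisenbud20163264}. Your proof is exactly the argument that citation gestures at: a topological sort of the strata yields a filtration by closed subsets whose successive differences are the affine cells, Quillen's sequence gives generation by induction, and the generic-rank functionals $r_j$ (well-defined on $G_0$ because the Euler characteristic of ranks at the generic point $\eta_j$ is additive in triangles and $X$ smooth gives $K_0(X)=G_0(X)$) give the triangular matrix that forces linear independence. This is correct and matches the paper's intended approach, so nothing further to say.
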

We are ready for the main result of this appendix. 
\begin{proposition}\label{prop:invariants}
Let $X$ be a $\mathsf{G}$-horospherical variety of Picard rank one, with closed $\mathsf{G}$-invariant subvarieties $Y, \, Z\subset X$, then the canonical map induced by \eqref{eq:pic}:
\begin{equation}\label{eq:K_0-map}
K_0(Z)\oplus K_0(Y) \xrightarrow {(\imath_{*},\sigma_{*}\pi^{*})}  K_0(X).
\end{equation}
is an isomorphism. In particular, $K_0(X)$ is a free abelian group and we have:
\begin{equation*}
    \ranksf {K_0(X)} = \ranksf K_0(Y) + \ranksf K_0(Z).
\end{equation*}
\end{proposition}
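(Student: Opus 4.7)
The plan is to apply \cref{thm:Quillen} to all three varieties $Y$, $Z$, $X$, and to verify that the pushforward sheaves $\iota_*F^Z_i$ and $\sigma_*\pi^*F^Y_j$, arising from bases of $K_0(Z)$ and $K_0(Y)$, assemble into a basis of $K_0(X)$. Since $Y=\mathsf{G}/\mathsf{P}_Y$ and $Z=\mathsf{G}/\mathsf{P}_Z$ are flag varieties, the Bruhat decomposition provides finite affine stratifications $\{V^Y_j\}_j$ and $\{V^Z_i\}_i$ by Schubert cells. Applying \cref{thm:Quillen} yields free $\mathbb{Z}$-bases $\{[F^Y_j]\}_j$ and $\{[F^Z_i]\}_i$ of $K_0(Y)$ and $K_0(Z)$, where each $F^Y_j$ (resp.\ $F^Z_i$) is supported in $\overline{V^Y_j}$ (resp.\ $\overline{V^Z_i}$) with generic rank one on the corresponding open stratum.

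Next I would construct a finite affine stratification of $X$ using the decomposition $X=Z\sqcup(X\setminus Z)$. The Schubert cells $V^Z_i$ are locally closed in $X$ because $Z$ is closed. For the open piece, the isomorphism $X\setminus Z\cong\mathfrak{X}\setminus E$ composed with the $\mathbb{A}^c$-bundle $\pi:\mathfrak{X}\setminus E\to Y$ gives, over each Schubert cell $V^Y_j\cong\mathbb{A}^{n_j}$, an affine cell $U^Y_j:=\sigma(\pi^{-1}(V^Y_j)\setminus E)\cong\mathbb{A}^{n_j+c}$; the triviality of the restricted affine bundle over $\mathbb{A}^{n_j}$ follows from the vanishing of quasi-coherent cohomology on affine schemes, which kills the torsor class of the $\mathbb{A}^c$-bundle. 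The closure relations of \cref{def:strat} are then verified as follows: the closure of $V^Z_i$ in $X$ coincides with its closure in $Z$ (a union of smaller Schubert cells); and the closure of $U^Y_j$ in $X$ decomposes as the union of the cells $U^Y_{j'}$ corresponding to $V^Y_{j'}\subseteq\overline{V^Y_j}$ together with $\overline{U^Y_j}\cap Z$.

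Applying \cref{thm:Quillen} to $X$, the candidate basis of $K_0(X)$ consists of the classes $[\iota_*F^Z_i]$, supported on $\overline{V^Z_i}\subseteq X$ with generic rank one on $V^Z_i$, and $[\sigma_*\pi^*F^Y_j]$, supported on $\sigma(\pi^{-1}(\overline{V^Y_j}))=\overline{U^Y_j}$ with generic rank one on $U^Y_j$ (since $\sigma|_{\mathfrak{X}\setminus E}$ is an isomorphism and pullback along the affine bundle $\pi$ preserves generic rank). By construction, this set is the image under $(\iota_*,\sigma_*\pi^*)$ of a basis of $K_0(Z)\oplus K_0(Y)$, so the canonical map \eqref{eq:K_0-map} sends a basis to a basis. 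This proves simultaneously that $K_0(X)$ is free abelian, that the map is an isomorphism, and the announced rank formula.

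The main obstacle is the verification of the closure condition for the cells $U^Y_j$ in $X$: one must show that $\overline{U^Y_j}\cap Z$ decomposes as a union of Schubert cells $V^Z_i$ of $Z$. This ultimately relies on the $\mathsf{G}$-equivariance of the whole diagram \eqref{eq:pic}, which forces $\overline{U^Y_j}\cap Z$ to be a $\mathsf{G}$-invariant closed subvariety of $Z$ and hence a union of $\mathsf{B}$-Schubert cells for any choice of Borel $\mathsf{B}\subseteq \mathsf{G}$ compatible with the Bruhat decomposition of $Z$.
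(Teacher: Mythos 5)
Your overall strategy is the same as the paper's: build an affine stratification of $X$ out of Schubert cells of $Z$ and the images under $\sigma$ of $\mathbb{A}^c$-bundle preimages of Schubert cells of $Y$, then invoke \cref{thm:Quillen}. The one place where the argument as written is wrong is precisely the step you yourself flag as the ``main obstacle'': you claim that $\mathsf{G}$-equivariance of \eqref{eq:pic} forces $\overline{U^Y_j}\cap Z$ to be a $\mathsf{G}$-invariant closed subvariety of $Z$. That cannot be right: $Z$ is $\mathsf{G}$-homogeneous, so its only $\mathsf{G}$-invariant closed subvarieties are $\emptyset$ and $Z$ itself, which would not give you the needed decomposition into proper Schubert cells. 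The cell $U^Y_j$ is the image of (the complement of $E$ in) the preimage of a single $\mathsf{B}$-orbit $V^Y_j\subset Y$; hence $U^Y_j$ is only $\mathsf{B}$-invariant, not $\mathsf{G}$-invariant, and the same goes for $\overline{U^Y_j}\cap Z$. What saves the argument is that $\mathsf{B}$-invariance already suffices: since $Z$ has only finitely many $\mathsf{B}$-orbits, a $\mathsf{B}$-invariant closed subvariety is automatically a union of Schubert cells. So the fix is to replace ``$\mathsf{G}$-invariant'' by ``$\mathsf{B}$-invariant'' and note that the $\mathsf{B}$-invariance of $U^Y_j$ follows from $\mathsf{B}$-equivariance of the diagram together with $V^Y_j$ being a $\mathsf{B}$-orbit.

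With that correction, your route is still a slight variant of the paper's: you stratify $X$ directly, whereas the paper first stratifies $\mathfrak{X}$ using both $\pi^{-1}(Y_j)\setminus E$ and the cells of the exceptional divisor $E\cong\mathsf{G}/(\mathsf{P}_Y\cap\mathsf{P}_Z)$, and only then pushes down to $X$. The paper's use of $E$ is what lets it verify the closure condition by an explicit computation (via the compatibility \eqref{eq:compatib} of Schubert cells of $E$ with $\pi\circ\epsilon$ and $\rho$, plus $\sigma\circ\epsilon=\iota\circ\rho$), identifying exactly which cells of $Z$ appear in each $\overline{\sigma(U^1_j)}$. Your $\mathsf{B}$-invariance argument is more conceptual but less explicit; either route is fine once the group is corrected.
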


\begin{proof}
We prove that the morphism \eqref{eq:K_0-map} is an isomorphism.
Notice that the morphisms in \eqref{eq:pic} are equivariant with respect to the action of $\mathsf{G}$.
Since $E, Y$ and $Z$ are $\mathsf{G}$-homogeneous varieties, fixing a Borel subgroup $\mathsf{B}\subset \mathsf{G}$, we obtain compatible finite affine stratifications by Schubert cells on $E$, $Y$ and $Z$. Denote their open strata respectively as $\{E_i\}_i$, $\{Y_j\}_j$ and $\{Z_k\}_k$. Notably, the Schubert cells are exactly the orbits of $\mathsf{B}$. As $E\cong \mathsf{G}/(\mathsf{P}_Y\cap \mathsf{P}_Z)$, the Schubert cells are compatible with the morphisms in \eqref{eq:pic}, i.e. for every $i$, there exist $j, k$ such that:
\begin{equation}\label{eq:compatib}
    (\pi\circ \epsilon) \, (E_i)=Y_{j} \quad \textrm{and} \quad \rho \, (E_i) =Z_k.
\end{equation}  
We now define an affine stratification of $\mathfrak{X}$. Consider the following collection of locally closed subsets:
\begin{equation*}
    U^1_j= \{ \pi^{-1}(Y_j) \setminus E \}_j \quad \textrm{and} \quad U^2_i = \{ \epsilon \, (E_i) \}_i.
\end{equation*}
We claim that the collection defined by $\{U^1_j\}_j \sqcup \{U^2_i\}_i$ is a finite affine stratification of $\mathfrak{X}$. Indeed, it is immediate to see that the proposed collection is a partition of $\mathfrak{X}$, so that \cref{def:strat}.\eqref{cd-decomp} holds. The verification of the conditions \cref{def:strat}.\eqref{cd-closure} and \cref{def:strat}.\eqref{cd-affine} is straightforward for $\{U_i^2\}_i$, hence we point out the less intuitive steps for the subcollection $\{U^1_j\}_j$. Condition \cref{def:strat}.\eqref{cd-affine} holds because $U^1_j\cong \mathbb{A}^{n_j}$, as $\pi:\mathfrak{X}\setminus E\rightarrow Y$ is an $\mathbb{A}^c$-bundle, which is trivial on every Schubert cell of $Y$.
We focus on \cref{def:strat}.\eqref{cd-closure}. From \eqref{eq:compatib}, we have:
\begin{equation*}
    (\pi\circ \epsilon)^{-1}(Y_j) = \bigsqcup_{(\pi \circ \epsilon)(E_s) \subseteq Y_j} E_s
\end{equation*}
As a consequence, we obtain:
\begin{equation}\label{eq:closure}
\begin{aligned}
    \overline{U}^1_j &= \pi^{-1}(\overline{Y}_j) =\\ &=  \bigsqcup_{Y_t \subseteq \overline{Y}_j}\bigg((\pi^{-1}(Y_t) \setminus E) \sqcup  (\pi\circ \epsilon)^{-1}(Y_t) \bigg) = \\ &=  \bigsqcup_{Y_t \subseteq \overline{Y}_j}\bigg((\pi^{-1}(Y_t) \setminus E) \sqcup \bigg(\bigsqcup_{(\pi \circ \epsilon)(E_s) \subseteq Y_t} \epsilon (E_s)\bigg) \bigg)
\end{aligned}
\end{equation}
proving \cref{def:strat}.\eqref{cd-closure}.

We now show that the collection given by 
$\{\sigma(U^1_j)\}_j\sqcup \{\imath(Z_k)\}_k$ is a finite affine stratification of $X$.  
Again, the verification of the conditions in \cref{def:strat}(\ref{cd-closure},~\ref{cd-affine}) is straightforward for $\{\imath(Z_k)\}_k$. We point out the less intuitive steps for the strata $\{\sigma(U^1_j)\}_j$. Notice that $\sigma$ is an isomorphism outside of the exceptional locus, hence \cref{def:strat}.\eqref{cd-affine} holds by definition of $U^1_j$. To show \cref{def:strat}.\eqref{cd-closure}, we have:
\begin{align*}
\overline{\sigma(U^1_j)} &= \sigma(\overline{U}^1_j)=\\
&=  \bigsqcup_{Y_t \subseteq \overline{Y}_j}\bigg(\sigma(\pi^{-1}(Y_t) \setminus E) \sqcup \bigg(\bigsqcup_{(\pi \circ \epsilon)(E_s) \subseteq Y_t} (\sigma \circ \epsilon) (E_s)\bigg) \bigg),
\end{align*}
where the first equality holds by properness of $\sigma$ and the second is a consequence of \eqref{eq:closure}. As $\sigma \circ \epsilon = \imath \circ \rho$, we conclude by \eqref{eq:compatib} that $\{\sigma(U^1_j)\}_j\sqcup\{\imath(Z_k)\}_k$ is a finite affine stratification of $X$.

The collection $\{\sigma_*\pi^* \cO_{\overline{Y}_j}\}_j\sqcup \{\imath_* \cO_{\overline{Z}_k}\}_k$ induces a basis of $K_0(X)$ by \cref{thm:Quillen}. Indeed, the support property is immediate, moreover, we have
\begin{equation*}
    (\sigma_*\pi^* \cO_{\overline{Y}_j})\vert_{\sigma(U_j^1)} = \cO_{\sigma(U_j^1)}, \quad \quad \quad (\imath_* \cO_{\overline{Z}_k})\vert_{Z_k} = \cO_{Z_k}.
\end{equation*}  

Finally, this shows that the morphism  defined in  \eqref{eq:K_0-map} carries a basis of $K_0(Z)\oplus K_0(Y)$ to a basis of $K_0(X)$, proving that $(\imath_*,\sigma_*\pi^*)$ is an isomorphism.
\end{proof}

\newpage

\bibliography{biblio}

\Addresses

\end{document}